\newtheorem{theorem}{Theorem}[section]
\newtheorem{lemma}[theorem]{Lemma}
\newtheorem{proposition}[theorem]{Proposition}
\theoremstyle{assumption}
\theoremstyle{definition}
\newtheorem{definition}[theorem]{Definition}
\theoremstyle{remark}
\newtheorem{remark}[theorem]{Remark}
\numberwithin{equation}{section}
\newcommand{\eps}{\varepsilon}
\newcommand{\norm}[1]{\Vert#1\Vert}
\newcommand{\abs}[1]{\left\vert#1\right\vert}
\newcommand{\inner}[1]{\left(#1\right)}
\newcommand{\normm}[1]{{ \vert\kern-0.25ex \vert\kern-0.25ex \vert #1
		\vert\kern-0.25ex \vert\kern-0.25ex \vert}}
\newbox \abstractbox
\renewenvironment{abstract}{\global\setbox\abstractbox=\vbox\bgroup
	\hsize=\textwidth
	\vskip 1.2cm
	%\hrule\vskip12pt
	\noindent\unskip \textbf{Abstract.}
}
{%\vskip12pt \hrule
	\egroup}
\def\@startsection#1#2#3#4#5#6{%
	\if@noskipsec \leavevmode \fi
	\par \@tempskipa #4\relax
	\@afterindentfalse
	\ifdim \@tempskipa <\z@ \@tempskipa -\@tempskipa \@afterindentfalse\fi
	\if@nobreak \everypar{}\else
	\addpenalty\@secpenalty\addvspace\@tempskipa\fi
	\@ifstar{\@dblarg{\@sect{#1}{\@m}{#3}{#4}{#5}{#6}}}%
	{\@dblarg{\@sect{#1}{#2}{#3}{#4}{#5}{#6}}}%
}
\def\@settitle{%
	\bgroup
	\centering
	\vglue1cm
	\fontsize{12}{15}\fontseries{b}\selectfont
	%\usefont{T1}{lmr}{b}{n}%
	% \uppercasenonmath\@title
	\@title
	\vskip 20pt plus 6pt minus 8pt
	\egroup
}
\def\@setauthors{%
	\begingroup
	\trivlist
	\centering \bfseries
	\normalsize\@topsep30\p@\relax
	\advance\@topsep by -\baselineskip
	\item\relax
	\andify\authors
	{\rmfamily\authors}%
	\endtrivlist
	\endgroup
}
\def\@setaddresses{\par
	\nobreak \begingroup
	\normalsize
	\def\author##1{\nobreak\addvspace\bigskipamount}%
	\def\\{\unskip, \ignorespaces}%
	\interlinepenalty\@M
	\def\address##1##2{\begingroup
		\par\addvspace\bigskipamount\noindent
		\@ifnotempty{##1}{(\ignorespaces##1\unskip) }%
		{\ignorespaces##2}\par\endgroup}%
	\def\curraddr##1##2{\begingroup
		\@ifnotempty{##2}{\nobreak\indent{\itshape Current address}%
			\@ifnotempty{##1}{, \ignorespaces##1\unskip}\/:\space
			##2\par}\endgroup}%
	\def\email##1##2{\begingroup
		\@ifnotempty{##2}{\nobreak\noindent{\itshape E-mail address}%
			\@ifnotempty{##1}{, \ignorespaces##1\unskip}\/:
			##2\par}\endgroup}%
	\def\urladdr##1##2{\begingroup
		\@ifnotempty{##2}{\nobreak\indent{\itshape URL}%
			\@ifnotempty{##1}{, \ignorespaces##1\unskip}\/:\space
			\ttfamily##2\par}\endgroup}%
	\addresses
	\endgroup
}
\renewcommand\section{\@startsection{section}{1}{\z@}%
	{27pt plus 6pt minus 8pt}{14pt plus 6pt minus 8pt}%%
	{\center\normalfont\large\bfseries}}
\begin{document}
	%\begin{frontmatter}
	
	\title[Gevrey well-posedness of the hydrostatic MHD-wave system]{Gevrey well-posedness of the hydrostatic MHD-wave system}
	
\author[W.-X.Li and Z.Xu]{Wei-Xi Li and  Zhan Xu}
	
\address[W.-X. Li]{School of Mathematics and Statistics,   \&  Hubei Key Laboratory of Computational Science, Wuhan University, Wuhan 430072,  China } \email{wei-xi.li@whu.edu.cn}

\address[Z. Xu]{School of Mathematics and Statistics,  Wuhan University, Wuhan 430072, China\newline
and \newline
Department of Applied Mathematics, The Hong Kong Polytechnic
University, Kowloon, Hong Kong, SAR, China.}
\email{xuzhan@whu.edu.cn}

	\keywords{hydrostatic MHD-wave system, Gevrey Well-posedness}

 \subjclass[2020]{76W05,76D10,35Q35}

\begin{abstract}
This paper investigates the well-posedness of the hydrostatic MHD-wave system. Unlike the standard hydrostatic MHD equations, the tangential magnetic field equation in this system is degenerate hyperbolic rather than parabolic, which leads to substantial mathematical difficulties. Using the boundary decomposition method, we establish local well-posedness in Gevrey $\frac{7}{6}$ space for convex initial data.
\end{abstract}
	
	\maketitle
	
	%\tableofcontents

	\section{Introduction and main result}
	
This paper investigates the following hydrostatic MHD-wave system in the domain $\Omega=\{(x,y)\in \mathbb T\times\mathbb R, 0<y<1\}$ (where the period of $\mathbb{T}$  is taken to be 1):
	\begin{equation}
		\left\{
		\begin{aligned}
			&  \partial_t u+u\partial_xu+v\partial_yu+  \partial_x p -\partial_y^2u
			=f\partial_x f+g\partial_yf,\\
            &\partial_yp=0,\\
			& \eta\partial_t^2 f +\partial_t f+u\partial_xf+v\partial_yf-\partial_y^2f
			=f\partial_x u+g\partial_yu, \\
            & \eta\partial_t^2 g +\partial_t g+u\partial_xg+v\partial_yg-\partial_y^2g
			=f\partial_x v+g\partial_yv, \\
			& \partial_xu+\partial_yv=0,\quad \partial_xf+\partial_yg=0,\\
   			& (u,v,\partial_yf,g)|_{y=0,1}=\bm 0,\quad (u,f,\partial_tf)|_{t=0}=(u_0,f_0,f_1),\\
		\end{aligned}
		\right.
		\label{hyMHD}
	\end{equation}
 where the parameter $\eta>0$ is a constant, and the unknowns $(u,v)$, $(f,g)$ and $p$ represent the velocity field, the magnetic field and the scalar pressure of the fluid, respectively.

We begin by recalling the origin of this system and the main physical and mathematical motivations for our study. Let $\Omega_{\varepsilon}={(x,Y)\in \mathbb T\times\mathbb R, 0<Y<\varepsilon}$, where $\varepsilon$ denotes the width of the strip. The MHD-wave system in this thin strip is given by
\begin{equation}\label{orsystem}
    \left\{
    \begin{aligned}
&\partial_tU+U\cdot\nabla U+\nabla P-\varepsilon^2\Delta U=H\cdot\nabla H,\\
&\eta\partial_t^2H+\partial_tH+U\cdot\nabla H-\varepsilon^2\Delta H=H\cdot\nabla U,\\
&\nabla\cdot U=\nabla\cdot H=0,\quad (U,\partial_Yb_1,b_2)|_{Y=0,\varepsilon}=\bm 0,\\
&(U,H,\partial_tH)|_{t=0}=(U_0,H_0,H_1).
    \end{aligned}
    \right.
\end{equation}
Here, $U$, $H$, and $P$ denote the velocity field, the magnetic field, and the scalar pressure, respectively, while $b_1$ and $b_2$ represent the tangential and normal components of the magnetic field $H$. For a detailed mathematical derivation of the governing equations \eqref{orsystem}, we refer to \cite{MR4508068,MR4151564}. Applying the scaling transformation
\begin{equation*}
\left\{
\begin{aligned}
    &U(t,x,Y)=(u^\varepsilon,\varepsilon v^\varepsilon)(t,x,\varepsilon^{-1}Y),\quad P(t,x,Y)=p^\varepsilon(t,x,\varepsilon^{-1}Y),\\
    &H(t,x,Y)=(f^\varepsilon,\varepsilon g^\varepsilon)(t,x,\varepsilon^{-1}Y),
\end{aligned}
\right.
\end{equation*}
we reformulate system \eqref{orsystem} into the following rescaled MHD-wave system in $\Omega$:
	\begin{equation}
		\left\{
		\begin{aligned}
			&  \partial_t u^\varepsilon+u^\varepsilon\partial_xu^\varepsilon+v^\varepsilon\partial_yu^\varepsilon -\varepsilon^2\partial_x^2u^\varepsilon-\partial_y^2u^\varepsilon+ \partial_x p^\varepsilon
			=f^\varepsilon\partial_x f^\varepsilon+g^\varepsilon\partial_yf^\varepsilon,\\
&\eps^2(\partial_tv^\varepsilon+u^\varepsilon\partial_xv^\varepsilon+v^\varepsilon\partial_yv^\varepsilon-\varepsilon^2\partial_x^2v^\varepsilon-\partial_y^2v^\varepsilon)+\partial_yp^\varepsilon=\eps^2(f^\varepsilon\partial_x g^\varepsilon+g^\varepsilon\partial_yg^\varepsilon),\\
			& \eta\partial_t^2 f^\varepsilon +\partial_t f^\varepsilon+u^\varepsilon\partial_xf^\varepsilon+v^\varepsilon\partial_yf^\varepsilon-\varepsilon^2\partial_x^2f^\varepsilon-\partial_y^2f^\varepsilon
			=f^\varepsilon\partial_x u^\varepsilon+g^\varepsilon\partial_yu^\varepsilon, \\
            & \eps^2(\eta\partial_t^2 g^\varepsilon +\partial_t g^\varepsilon+u^\varepsilon\partial_xg^\varepsilon+v^\varepsilon\partial_yg^\varepsilon-\varepsilon^2\partial_x^2g^\varepsilon-\partial_y^2g^\varepsilon)
			=\varepsilon^2\inner{f^\varepsilon\partial_x v^\varepsilon+g^\varepsilon\partial_yv^\varepsilon}, \\
			& \partial_xu^\varepsilon+\partial_yv^\varepsilon=0,\quad \partial_xf^\varepsilon+\partial_yg^\varepsilon=0,\\
   			& (u^\varepsilon,v^\varepsilon,\partial_yf^\varepsilon,g^\varepsilon)|_{y=0,1}=\bm 0,\\
          &(u^\varepsilon,v^\varepsilon,f^\varepsilon,g^\varepsilon,\partial_tf^\varepsilon,\partial_tg^\varepsilon)|_{t=0}=(u_0^\varepsilon,v_0^\varepsilon,f_0^\varepsilon,f_1^\varepsilon,g_0^\varepsilon,g_1^\varepsilon).\\
		\end{aligned}
		\right.
		\label{anisoMHD}
	\end{equation}
Formally passing to the limit $\varepsilon \to 0$ in \eqref{anisoMHD} yields the hydrostatic MHD-wave system \eqref{hyMHD}.

The aim of this paper is to study the well-posedness of system \eqref{hyMHD} with initial data belonging to a Gevrey class. The mathematical analysis of the Prandtl boundary layer and related models has a long history, and their well-posedness or ill-posedness has been extensively explored in various function spaces; see, e.g., \cite{MR3327535, MR3795028, MR3925144, MR1476316, MR2601044, MR2849481, MR3461362, MR2975371, MR3590519, MR3493958, MR2020656, MR4465902, MR3464051,MR4293727,lixuyang2025,MR4834601,MR4915128,MR4726840,MR4661716,MR4700999,MR4498949,MR4494626} and the references therein.
Similar to the classical Prandtl equation, the nonlinear terms $v\partial_yu$, $g\partial_yf$, $v\partial_yf$, and $g\partial_yu$ in \eqref{hyMHD} induce a loss of one tangential derivative in the energy estimates. Thus, it is natural to work with analytic data when no structural assumptions are imposed on the initial data (see \cite{MR2601044,MR2563627}). Indeed, Renardy \cite{MR2563627} demonstrated the ill-posedness of the hydrostatic Navier-Stokes system for general Sobolev data. On the other hand, Paicu-Zhang-Zhang \cite{MR4125518} established the global well-posedness of the anisotropic Navier-Stokes system and the hydrostatic Navier-Stokes system with small analytic initial data. For convex initial data, G\'erard-Varet, Masmoudi  and Vicol \cite{MR4149066} proved the local well-posedness of the hydrostatic Navier-Stokes system in the Gevrey class of index $9/8$ (see \cite{MR4621052,MR4803680} for recent improvements on the Gevrey index). We also refer to related results for the classical Prandtl equation: under Oleinik's monotonicity assumption, the Prandtl equation is well-posed in Sobolev spaces (see \cite{MR3327535,MR3765768,MR3385340,MR1697762,MR2020656}); another line of research establishes well-posedness in Gevrey spaces (see \cite{MR3795028,MR3925144,MR3429469,MR4465902,MR4055987,MR2049030,MR1617542,Xu2026} for instance).

When a magnetic field is present, Aarach \cite{MR4362378} justified the inviscid limit from the anisotropic MHD system to the hydrostatic MHD system globally in time for small analytic data. Recently, this inviscid limit was justified in Sobolev spaces in \cite{MR4384041} when the initial tangential magnetic field is non-zero. For the classical MHD boundary layer equations, under a uniform tangential magnetic field, Liu, Xie, and Yang \cite{MR3882222} proved well-posedness in Sobolev spaces without Oleinik's monotonicity assumption and justified the Prandtl ansatz in \cite{MR3975147}. More recently, Li and Yang \cite{MR4270479} established a well-posedness result in the Gevrey class of index $3/2$ without any structural assumptions.

The hydrostatic MHD-wave system \eqref{hyMHD} is a mixed degenerate system coupling parabolic and hyperbolic equations. In contrast to the purely parabolic hydrostatic MHD equations, there is no cancellation mechanism between the equations for the tangential velocity field $u$ and the tangential magnetic field $f$. This absence of cancellation makes it difficult to establish the well-posedness of \eqref{hyMHD} in Sobolev settings.

We first introduce the Gevrey spaces that will be used throughout the paper.
\begin{definition}\label{def:one}
 For given $\rho>0$, $r\in\mathbb{R}$ and $\sigma\ge 1$, the space $X_{\rho,\sigma,r}$ of Gevrey  functions consists of all smooth functions $h(x,y)$ defined in domain $\Omega$ such that the norm $\norm{h}_{X_{\rho,\sigma,r}}<+\infty$, where
 	\begin{equation}\label{def1}
  \begin{aligned}
\norm{h}^2_{X_{\rho,\sigma,r}}\stackrel{\rm def}{=}\sum_{m\ge0}L^2_{\rho,m,\sigma,r}\|\partial^m_{x} h\|^2_{L^2},
  \end{aligned}
	\end{equation}  
  with 
   	\begin{equation}\label{def2}
  \begin{aligned}
L_{\rho,m,\sigma,r}\stackrel{\rm def}{=}\frac{\rho^{m+1}(m+1)^r}{(m!)^{\sigma}}, \ m\in \mathbb{Z}_+.
  \end{aligned}
	\end{equation}  
\end{definition}

The main result of this paper concerns the well-posedness of system \eqref{hyMHD} with initial data in a more general Gevrey class. Without loss of generality, we set $\eta = 1$ in \eqref{hyMHD}. The  main result  is stated as follows.

\begin{theorem}\label{thm3}
    Let $1\leq\sigma\leq \frac{7}{6}$, $r\ge10$ and $\rho_0>0$. Assume that the initial data $(u_0,f_0,f_1)$ of system \eqref{hyMHD} satisfies the regularity condition
\begin{align}\label{conregular}
\sum_{k=0}^5\|\partial_y^ku_0\|_{X_{2\rho_0,\sigma,r}}^2+\sum_{k=0}^4\|\partial_y^kf_0\|_{X_{2\rho_0,\sigma,r}}^2+\sum_{k=0}^3\|\partial_y^kf_1\|_{X_{2\rho_0,\sigma,r}}^2<+\infty,
\end{align}
the convexity condition
\begin{align*}%\label{conconvex}
    \inf_{\Omega}\partial_y^2u_0>0,
\end{align*}
and the compatibility conditions. Then there exist $T>0$ and $0<\tilde{\rho}<\rho_0$ such that system \eqref{hyMHD} admits a unique local-in-time solution $(u,f)$  satisfying for any $t\in [0,T]$,
\begin{equation}\label{ret3}
    \begin{aligned}
\sum_{k=0}^2\|\partial_t^k(u,\partial_yu)\|_{X_{\tilde{\rho},\sigma,r}}^2+\sum_{k=0}^2\|\partial_t^k(f,\partial_tf,\partial_yf,\partial_t\partial_yf,\partial_y^2f)\|_{X_{\tilde{\rho},\sigma,r}}^2 < +\infty,
    \end{aligned}
\end{equation}
and 
\begin{align}\label{retconvex}
    \inf_{\Omega}\partial_y^2u>0.
\end{align}
\end{theorem}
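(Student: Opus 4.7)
The plan is to reduce Theorem \ref{thm3} to a closed a priori Gevrey estimate on smooth solutions, and then recover existence, uniqueness and convexity-propagation through a standard regularization/compactness argument. Concretely, I would build an approximate sequence (for instance by adding a tangential viscosity $\varepsilon\partial_x^2$ to every evolution equation of \eqref{hyMHD}) and control, uniformly in $\varepsilon$, the functional
\begin{align*}
E(t)\stackrel{\rm def}{=}\sum_{k=0}^{2}\norm{\partial_t^k(u,\partial_y u)}_{X_{\rho(t),\sigma,r}}^2+\sum_{k=0}^{2}\norm{\partial_t^k(f,\partial_t f,\partial_y f,\partial_t\partial_y f,\partial_y^2 f)}_{X_{\rho(t),\sigma,r}}^2,
\end{align*}
for a time-decreasing radius $\rho(t)$ with $\rho(0)=2\rho_0$ and $\rho(T)\ge\tilde\rho>0$. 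The need to include $\partial_t^k$ up to $k=2$ on $f$, as well as both $\partial_y f$ and $\partial_y^2 f$, is dictated by the hyperbolic character of the $f$-equation: there is no parabolic smoothing in $y$, so $y$-regularity can be exchanged for tangential regularity only through the equation itself, which relates $\partial_t^2 f$ to $\partial_y^2 f$ and lower-order tangential terms.

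The heart of the proof is closing the estimate for $E(t)$. Applying $\partial_x^m\partial_t^k$ to each equation produces the standard loss-of-one-derivative terms $v\partial_y\partial_x^m u$, $g\partial_y\partial_x^m f$, $v\partial_y\partial_x^m f$ and $g\partial_y\partial_x^m u$; none of these is controlled by $\|\partial_x^m(\cdot)\|_{L^2}^2$, because $v=-\int_0^y\partial_x u$ and $g=-\int_0^y\partial_x f$ each cost one tangential derivative. To absorb them I would follow the boundary-decomposition strategy of G\'erard-Varet--Masmoudi--Vicol announced in the abstract: split the tangential profiles into an interior/hydrostatic part and a thin boundary-layer corrector, and introduce a Rayleigh-type quotient built from $\partial_y u$ and $\partial_y^2 u$, which is well defined by the convexity hypothesis (propagated from the data). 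Testing the $u$-equation against this corrector reveals a cancellation at top order, so the loss in $v\partial_y\partial_x^m u$ is paid for by the dissipation $\|\partial_y\partial_x^m u\|_{L^2}^2$. For the $f$-equation I would perform a hyperbolic-type energy estimate, multiplying $\partial_x^m\partial_t^k$ of the equation by $\partial_t\partial_x^m\partial_t^k f$ to couple $\|\partial_t\partial_x^m\partial_t^k f\|_{L^2}^2$ to $\|\partial_y\partial_x^m\partial_t^k f\|_{L^2}^2$; the source terms $g\partial_y\partial_x^m u$ and $f\partial_x\partial_x^m u$ are absorbed by the same convexity/boundary-decomposition mechanism used on the $u$-side, exploiting $\partial_xf+\partial_y g=0$.

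Summing the weighted $\partial_x^m$-estimates would yield a differential inequality of the form
\begin{align*}
\frac{d}{dt}E(t)+(-\dot\rho(t))\,D(t)\le C\,\mathcal{N}(E(t)),
\end{align*}
where $D(t)$ encodes the half-derivative gain produced by the shrinking radius, and $\mathcal{N}$ is a polynomial nonlinearity whose combinatorial growth reflects Leibniz on products of Gevrey factors. The Gevrey index $\sigma=7/6$ is precisely the threshold at which this combinatorial growth, amplified by the presence of the extra hyperbolic block $(f,g)$, is balanced by $-\dot\rho D$; choosing $\rho(t)$ to decrease linearly closes the estimate on a short interval $[0,T]$. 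Existence then follows by compactness, uniqueness by the same scheme applied to a difference of solutions, and the lower bound \eqref{retconvex} is propagated by a maximum-principle argument on $\partial_y^2 u$, whose evolution equation has coefficients controlled by $E(t)$ on $[0,T]$.

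The main obstacle I anticipate is the interplay between the parabolic convexity mechanism, which has so far been developed for fully parabolic hydrostatic systems, and the degenerate-hyperbolic structure of the magnetic block: as the introduction stresses, there is no cancellation between the $u$- and $f$-energies here, so the convexity of $u$ must single-handedly absorb the loss of derivative in all four bilinear couplings. Transporting the time derivatives $\partial_t^k f$ through the hyperbolic energy identity in a way that remains compatible with the Rayleigh-quotient cancellation on the $u$-side is the technical crux of the argument, and appears to be exactly what restricts the admissible Gevrey index to $\sigma\le 7/6$.
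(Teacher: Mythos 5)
Your plan is essentially the one followed in the paper: reduce to an a priori estimate with a shrinking Gevrey radius, estimate the vorticity through a Rayleigh-type quotient $\partial_x^m\partial_y u/\sqrt{\partial_y^2u}$ enabled by the propagated convexity, control the stream function $\phi$ through an interior/boundary-layer decomposition $\vec\phi_s+\vec\phi_b$ (with the boundary part further split into a heat, a transport, and a remainder corrector), run a hyperbolic $L^2$-energy on the $f$-equation tested against $\partial_t\partial_x^m f$, and propagate convexity by a maximum principle. Two attributions in your last paragraph do not match the paper, though. First, the time derivatives $\partial_t^k$, $k\le 2$, are not carried for the hyperbolic energy identity; they are needed precisely so that $\partial_t\partial_y^2u$, $\partial_y^3u$, $\partial_y^4f$ can be reconstructed from the equations and fed into the $L^\infty_t L^\infty_{x,y}$ bounds required by the maximum principle for $\partial_y^2u$ (the paper's Remark 1.2 and Section 3). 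Second, the threshold $\sigma\le 7/6$ is not a generic combinatorial effect of the extra hyperbolic block, nor does it come from compatibility between time derivatives and the Rayleigh quotient: the boundary-corrector estimates all close at $\sigma\le 3/2$, and the genuine obstruction is the magnetic source $\partial_x^m(f\partial_x\partial_y f+g\partial_y^2f)$ tested against the convexity-weighted interior corrector $\partial_y^2\phi_{s,(m)}/\partial_y^2u$, which forces the loss recorded in inequality \eqref{ineqkey} and estimate \eqref{est:P5}. Flagging this precisely matters because it explains why the result falls short of the $3/2$ index available for the scalar hydrostatic Navier--Stokes problem.
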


\begin{remark}
The inclusion of time derivatives of $(u,f)$ in the estimates is primarily to ensure the convexity of $u$. Further details are provided in Section \ref{sec:maximum}.
\end{remark}

\begin{remark}
If the right-hand side of the tangential velocity equation in \eqref{hyMHD} were treated as a given source term, the analysis of the hydrostatic Navier-Stokes equations in \cite{MR4803680} suggests that the optimal Gevrey index for convex initial data would be $\frac{3}{2}$. However, our estimates for these source terms are not sufficient to apply this argument directly; see estimate \eqref{est:P5} and, more specifically, inequality \eqref{ineqkey} for details.
\end{remark}

This paper is organized as follows. We state the \emph{a priori} estimate corresponding to Theorem \ref{thm3} in Section \ref{sec:pri}, present the maximum and minimum principles for $\partial_y^2u$ in Section \ref{sec:maximum}, and complete the proof of the \emph{a priori} estimate in Sections \ref{sec:x0}-\ref{sec:proof}.

\section{The \emph{a priori} estimate of Theorem \ref{thm3}}\label{sec:pri}
The key part in the proof of Theorem \ref{thm3} is to derive the \emph{a priori} estimate for system \eqref{hyMHD} so that the existence and uniqueness follow from a standard argument. Hence, for brevity, we only present the proof of the \emph{a priori} estimate and omit the regularization procedure.

\subsection{Notations and Gevrey norms}\label{subsec:norm}
To present our arguments more clearly, we will simplify the previously introduced notations and norms, and define new ones in the following discussion. Recall $\norm{\cdot}_{X_{\rho,\sigma,r}}$ and $L_{\rho,m,\sigma,r}$ are defined in Definition \ref{def:one}. For convenience, we abbreviate them as $\norm{\cdot}_{\rho,r}$ and $L_{\rho,m,r}$ for given $\sigma\ge 1$. For any function $h(x,y)$ and $m\in\mathbb{Z}_+$, we define $h_m(x,y)$ by 
\begin{equation*}
    h_m(x,y)\stackrel{\rm def}{=}L_{\rho,m,r}\partial_x^mh(x,y).    
\end{equation*}
With this notation, we have
\begin{equation*}
\norm{h}_{\rho,r}^2=\sum_{m\ge0}\norm{h_m}_{L^2(\Omega)}^2    
\end{equation*}
in view of \eqref{def1}.
On the other hand, we introduce the space $\tilde{X}_{\rho,\sigma,r}$ which consists of all sequences of functions $\vec h=\{h_{(m)}\}_{m\ge 0}$ defined in $\Omega$ such that $\norm{\vec{h}}_{\tilde{X}_{\rho,\sigma,r}}<+\infty$, where
\begin{equation*}
    \norm{\vec{h}}_{\tilde{X}_{\rho,\sigma,r}}^2\stackrel{\rm def}{=}\sum_{m\ge 0}L_{\rho,m,r}^2\norm{h_{(m)}}^2_{L^2(\Omega)},
\end{equation*}
with $L_{\rho,m,r}$ given as above. Obviously, if $h\in X_{\rho,\sigma,r}$, then $\{\partial_x^mh\}_{m\ge 0}\in\tilde{X}_{\rho,\sigma,r}$. For convenience, we still abbreviate $\norm{\cdot}_{\tilde{X}_{\rho,\sigma,r}}$ as $\norm{\cdot}_{\rho,r}$.  Correspondingly, we define the one-dimensional counterpart $\abs{\cdot}_{\rho,r}$ by
\begin{equation*}
    |h|_{\rho,r}^2\stackrel{\rm def}{=}\sum_{m\ge0}\norm{h_m}_{L_{x}^2(\mathbb{T})}^2\quad\mathrm{and}\quad |\vec{h}|_{\rho,r}^2\stackrel{\rm def}{=}\sum_{m\ge 0}L_{\rho,m,r}^2\norm{h_{(m)}}^2_{L_{x}^2(\mathbb{T})}.  
\end{equation*}
Moreover, for $I_0=(0,+\infty)$ and $I_1=(-\infty,1)$, we define the norm $\norm{\cdot}_{\rho,r,I_i}$ $(i=0,1)$ by
\begin{equation*}
    \norm{h}_{\rho,r,I_i}^2\stackrel{\rm def}{=}\sum_{m\ge0}\norm{h_m}_{L^2(\mathbb{T}\times I_i)}^2\quad\mathrm{and}\quad \norm{\vec{h}}_{\rho,r,I_i}^2\stackrel{\rm def}{=}\sum_{m\ge 0}L_{\rho,m,r}^2\norm{h_{(m)}}^2_{L^2(\mathbb{T}\times I_i)}.  
\end{equation*}
 It is obvious to see $\norm{\cdot}_{\rho,r}\leq \norm{\cdot}_{\rho,r,I_i}$ for $i=0,1$.
And we introduce the stream function $\phi(t,x,y)$ which satisfies
\begin{equation}\label{def:phi}
    v=-\partial_x\phi,\quad u=\partial_y\phi+\mathcal{C}(t),\quad \mathcal{C}(t)=\int_{\Omega}u(t,x,y)dxdy.
\end{equation}

 Let $(u,f)$ be the solution to system \eqref{hyMHD} with initial data satisfying all assumptions in Theorem \ref{thm3}. For given $\sigma\ge 1$, $r\in\mathbb{R}$ and $k\in\mathbb{N}$, we define energy functionals $\mathcal{X}_{\rho,k}(t)$, $\mathcal{Y}_{\rho,k}(t)$ and $\mathcal{Z}_{\rho,k}(t)$ by
 	\begin{equation}\label{def:energy}
		\left\{
		\begin{aligned}
\mathcal{X}_{\rho,k}(t)\stackrel{\rm def}{=}&\norm{\partial_t^ku}_{\rho,r-k\sigma+\frac{1}{2}}^2+\norm{\partial_t^k\partial_yu}_{\rho,r-k\sigma}^2+\norm{\partial_t^kf}_{\rho,r-(k+1)\sigma+\frac{5}{2}}^2\\
&+\norm{\partial_t^k(\partial_tf,\partial_yf)}_{\rho,r-(k+1)\sigma+\frac{3}{2}}^2+\norm{\partial_t^k\partial_yf}_{\rho,r-(k+1)\sigma+2}^2\\
&+\norm{\partial_t^k(\partial_t\partial_yf,\partial_y^2f)}_{\rho,r-(k+1)\sigma+1}^2,\\
\mathcal{Y}_{\rho,k}(t)\stackrel{\rm def}{=}&\norm{\partial_t^ku}_{\rho,r-k\sigma+1}^2+\norm{\partial_t^k\partial_yu}_{\rho,r-k\sigma+\frac{1}{2}}^2+\norm{\partial_t^kf}_{\rho,r-(k+1)\sigma+3}^2\\
&+\norm{\partial_t^k(\partial_tf,\partial_yf)}_{\rho,r-(k+1)\sigma+2}^2+\norm{\partial_t^k\partial_yf}_{\rho,r-(k+1)\sigma+\frac{5}{2}}^2\\
&+\norm{\partial_t^k(\partial_t\partial_yf,\partial_y^2f)}_{\rho,r-(k+1)\sigma+\frac{3}{2}}^2,\\
\mathcal{Z}_{\rho,k}(t)\stackrel{\rm def}{=}&\norm{\partial_t^k\partial_yu}_{\rho,r-k\sigma+\frac{1}{2}}^2+\norm{\partial_t^k\partial_y^2u}_{\rho,r-k\sigma}^2+\norm{\partial_t^{k+1}f}_{\rho,r-(k+1)\sigma+\frac{3}{2}}^2\\
&+\norm{\partial_t^{k+1}\partial_yf}_{\rho,r-(k+1)\sigma+1}^2.\\
		\end{aligned}
		\right.
	\end{equation}
Accordingly, we define
\begin{equation}\label{def:energytwo}
    \mathcal{X}_\rho(t)\stackrel{\rm def}{=}\sum_{k=0}^2\mathcal{X}_{\rho,k}(t),\quad  \mathcal{Y}_\rho(t)\stackrel{\rm def}{=}\sum_{k=0}^2\mathcal{Y}_{\rho,k}(t),\quad \mathcal{Z}_\rho(t)\stackrel{\rm def}{=}\sum_{k=0}^2\mathcal{Z}_{\rho,k}(t).
\end{equation}
In view of the definitions of $\mathcal{X}_\rho(t)$ and $\mathcal{Y}_\rho(t)$, it holds that
\begin{equation}\label{xy}
    \mathcal{X}_\rho(t)\leq\mathcal{Y}_\rho(t).
\end{equation}

\subsection{Statement of the \emph{a priori} estimate} \label{subsec:pri}

With notations and norms above, we now state the theorem on the \emph{a priori} estimate.
\begin{theorem}[\emph{A priori} estimate]\label{thm:pri}
    Let $1\leq\sigma\leq \frac{7}{6}$, $r\ge10$ and $\rho_0>0$.  Under the same assumption as in Theorem \ref{thm3}, there exist two constants $\beta, C_*\ge 1$ depending only on $\sigma$, $r$, $\rho_0$, the Sobolev embedding constants and the  initial data such that if $(u,f)$ is the solution to system \eqref{hyMHD} satisfying that
    \begin{equation}\label{ass:pri}
    \sup_{t\in[0,T]}\mathcal{X}_{\rho}(t)+\beta\int^T_0\mathcal{Y}_\rho(t)dt+\int^T_0\mathcal{Z}_\rho(t)dt\leq 2C_*M,  
    \end{equation}
    and
    \begin{equation}\label{ass:convex}
 2\delta\leq \inf_{\Omega}\partial_y^2u_0\leq \frac{1}{2\delta},
    \end{equation}
    for some $0<\delta<\frac{1}{2}$ and $T=\beta^{-1}$, then it holds that
        \begin{equation}\label{ret:pri}
    \sup_{t\in[0,T]}\mathcal{X}_{\rho}(t)+\beta\int^T_0\mathcal{Y}_\rho(t)dt+\int^T_0\mathcal{Z}_\rho(t)dt\leq C_*M,  
    \end{equation}
    and
    \begin{equation}\label{ret:convex}
        \forall\ t\in[0,T],\quad \delta\leq \inf_{\Omega}\partial_y^2u(t)\leq \frac{1}{\delta}.
    \end{equation}
    Here $\mathcal{X}_\rho$, $\mathcal{Y}_\rho$ and $\mathcal{Z}_\rho$ are defined in \eqref{def:energytwo}, $\rho$ is defined by
    \begin{align}\label{defrho2}
    \rho(t)\stackrel{\rm def}{=}\rho_0 e^{-\beta t},
\end{align}
and $M$ is defined by
    \begin{equation}\label{def:M}
        M\stackrel{\rm def}{=}\max\Big\{\mathcal{X}_{\rho}\big|_{t=0},\sum_{k=0}^5\|\partial_y^ku_0\|_{X_{2\rho_0,\sigma,r}}^2+\sum_{k=0}^4\|\partial_y^kf_0\|_{X_{2\rho_0,\sigma,r}}^2+\sum_{k=0}^3\|\partial_y^kf_1\|_{X_{2\rho_0,\sigma,r}}^2,1\Big\}.
    \end{equation}
\end{theorem}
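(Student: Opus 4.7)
The plan is a bootstrap/continuity argument on $[0,T]$ with $T=\beta^{-1}$, in which the large parameter $\beta$ and the shrinking radius $\rho(t)=\rho_0 e^{-\beta t}$ jointly play the role of the analyticity-radius-loss trick familiar from Cauchy--Kovalevskaya-type arguments. Differentiating $\norm{h}_{\rho,r}^2$ in time yields
\begin{equation*}
\frac{d}{dt}\norm{h}_{\rho,r}^2 = 2\sum_{m\ge 0}L_{\rho,m,r}^2\bigl(\partial_t\partial_x^m h,\partial_x^m h\bigr)_{L^2}-2\beta\,\norm{h}_{\rho,r+\frac{1}{2}}^2,
\end{equation*}
so the second term produces precisely the $\tfrac12$-shift in $r$ separating the $\mathcal{X}_\rho$- and $\mathcal{Y}_\rho$-indices in \eqref{def:energy}. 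Combined with the parabolic dissipation coming from $-\partial_y^2 u$ and $-\partial_y^2 f$ (which through integration by parts provides the $\mathcal{Z}_\rho$ contribution), this supplies the negative quantity $-\beta\mathcal{Y}_\rho-\mathcal{Z}_\rho$ that every nonlinear loss must be absorbed into.

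I would then organise the estimates block by block in $k\in\{0,1,2\}$ following \eqref{def:energy}. For the velocity block one applies $\partial_t^k\partial_x^m$ to the $u$-equation of \eqref{hyMHD}, pairs with $L_{\rho,m,r-k\sigma+\frac{1}{2}}^{2}\,\partial_t^k\partial_x^m u$, and integrates over $\Omega$. The crucial loss-of-derivative nonlinearity is $v\,\partial_y u$ (and the associated commutator with $\partial_t^k\partial_x^m$). I would control it via the boundary-decomposition method of G\'erard-Varet--Masmoudi--Vicol: introduce cutoffs $\chi_0(y)$, $\chi_1(y)$ localising near $y=0,1$, and on each boundary piece exchange $v\,\partial_y u$ for a good unknown built from $\partial_y^2 u$, whose strict positivity on $[0,T]$ is guaranteed by \eqref{ass:convex} together with the minimum principle of Section~\ref{sec:maximum}; the stream function \eqref{def:phi} makes this exchange at the level of $\phi$, producing only commutators that are quadratic in lower-order Gevrey norms.

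For the magnetic block I would exploit the damped-wave structure: test $\partial_t^k\partial_x^m$ applied to the $f$- and $g$-equations of \eqref{hyMHD} against $L_{\rho,m,\cdot}^{2}\,\partial_t^{k+1}(f,g)$, so that integration by parts in time delivers simultaneous control of $\partial_t^{k+1}f$ and $\partial_y\partial_t^k f$, which are exactly the quantities appearing in $\mathcal{Z}_\rho$. The nonlinearities $u\,\partial_x f$, $v\,\partial_y f$, $f\,\partial_x u$, $g\,\partial_y u$ are handled along the same boundary-decomposition lines, using the divergence-free constraints $\partial_x u+\partial_y v=\partial_x f+\partial_y g=0$ to trade any $\partial_y v$ or $\partial_y g$ for $\partial_x u$ or $\partial_x f$ wherever the $y$-derivative would otherwise cost an integration by parts into a boundary term.

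The hard part --- and what fixes the Gevrey index at $\sigma\le \tfrac{7}{6}$ rather than the $\tfrac{3}{2}$ accessible for the pure hydrostatic Navier--Stokes system --- is the magnetic forcing $f\,\partial_x f+g\,\partial_y f$ on the right-hand side of the $u$-equation. Because $f$ satisfies only a damped wave equation, the tangential derivative lost by the Gevrey commutator on this term cannot be recouped by a full $\partial_y^2 f$-smoothing; only the weaker $\partial_t\partial_y f$-regularity available through $\mathcal{Z}_\rho$ is at one's disposal, which forces the sharper $\sigma\le \tfrac{7}{6}$ constraint flagged in the remark above (estimates \eqref{est:P5}--\eqref{ineqkey}). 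Once every nonlinear contribution is bounded by an expression of the form $C(1+C_*M)\mathcal{Y}_\rho+CM^{3/2}$, I would choose $\beta$ large (depending only on $\sigma$, $r$, $\rho_0$, $M$) to absorb the $\mathcal{Y}_\rho$-losses into the gain from the shrinking radius, and integrate on $[0,\beta^{-1}]$ to deduce \eqref{ret:pri}. The convexity bound \eqref{ret:convex} will then follow from the minimum/maximum principle for $\partial_y^2 u$ of Section~\ref{sec:maximum}, together with the Lipschitz-in-time regularity of $\partial_y^2 u$ furnished by the $\mathcal{X}_\rho$-control on $\partial_t^k u$ for $k\le 2$.
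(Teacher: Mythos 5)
Your proposal gets the broad architecture right — Gevrey energy functionals with the $-\beta(m+1)$ radius-loss factor absorbing one-half derivative, the hydrostatic weight $1/\partial_y^2 u$ to cancel the convection term $(\partial_x^m v)\partial_y^2 u$ via the divergence constraint, the maximum principle for $\partial_y^2 u$ to keep that weight uniformly bounded, and a closing bootstrap with $\beta$ large — and your heuristic for why the magnetic forcing limits the index (only damped-wave smoothing available, hence \eqref{ineqkey}) matches the paper's remark. The weak point is the treatment of the pressure/stream-function term. You describe "cutoffs $\chi_0(y),\chi_1(y)$ localising near $y=0,1$" together with a good-unknown exchange; this is the mechanism of G\'erard-Varet--Masmoudi--Vicol~\cite{MR4149066}, which for the hydrostatic Navier--Stokes system yields Gevrey $9/8$, a strictly smaller index than the $7/6$ claimed here. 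The paper does not localise by cutoffs at all: it decomposes $\{\partial_x^m(\phi-\phi_0)\}_{m\ge 0}$ globally into $\vec\phi_s+\vec\phi_b$ where $\vec\phi_s$ solves a problem with the artificial conditions $\phi_{s,(m)}|_{y=0,1}=\partial_y^2\phi_{s,(m)}|_{y=0,1}=0$, and $\vec\phi_b$ is built as an explicit sum $\phi_H+\vec\phi_T+\vec\phi_R$ of heat-extension solutions on the half-lines $I_0=(0,\infty)$, $I_1=(-\infty,1)$, a vorticity-transport corrector, and a full boundary corrector, with the boundary data $(h^0,h^1)$ produced by a contraction argument (Lemma~\ref{lem:exist}); this is the Wang--Wang~\cite{MR4803680} / G\'erard-Varet--Maekawa--Masmoudi~\cite{MR4818200} decomposition, which pushes the Navier--Stokes index to $3/2$ and here lands at $7/6$ after accounting for the magnetic forcing in $P_5$.

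Two further imprecisions worth correcting. First, $v\partial_y u$ is not the term the boundary decomposition is aimed at: that nonlinearity is killed directly by the hydrostatic weight (see the cancellation preceding \eqref{4eq10}). The boundary decomposition is needed to control $\int_0^T\|\phi\|_{\rho,r+\sigma}^2\,dt$, which enters through the pressure-related commutator $I_3$ in Lemma~\ref{lem:u} and through the boundary term $T_7$ (the trace of $\partial_x p$) in Lemma~\ref{lem:yu}. Second, the magnetic block is estimated by testing only the tangential equation for $f$ against $\partial_t\partial_x^m f$; the normal component $g$ is never estimated independently but reconstructed from $\partial_x f+\partial_y g=0$, so the "test the $g$-equation" step you propose is not needed (and would introduce $\partial_y^{-1}$-type quantities absent from the energy). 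As written, your plan would most likely yield well-posedness for $\sigma\le 9/8$ rather than the stated $\sigma\le 7/6$; to close the gap you need to replace the cutoff construction with the half-line heat-extension decomposition and the fixed-point for the boundary data.
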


\begin{remark}
    Note that for given $n\in\mathbb{Z}_+$, $0<\rho_1<\rho_2$, $r_1,r_2\in\mathbb{R}$ and suitable function $h$,
    \begin{equation}\label{note:norm}
        \norm{\partial_x^n h}_{\rho_1,r_1}\leq C_{\rho_1,\rho_2,n,r_1,r_2}\norm{h}_{\rho_2,r_2},
    \end{equation}
    where $C_{\rho_1,\rho_2,n,r_1,r_2}$ depends only on $\rho_1,\rho_2,n,r_1$  and $r_2$. Then a direct computation gives
    \begin{equation*}
    \left\{
    \begin{aligned}
       & \mathrm{condition}\ \eqref{conregular}\ \Longrightarrow \mathcal{X}_{\rho}(t)\big|_{t=0}<+\infty,\\
       &\mathrm{conclusions}\ \eqref{ret:pri},\ \eqref{ret:convex}\Longrightarrow \mathrm{conclusions}\ \eqref{ret3},\ \eqref{retconvex},
        \end{aligned}
    \right.
    \end{equation*}
    by choosing $\tilde{\rho}<\rho$ in view of \eqref{defrho2}. Consequently, Theorem \ref{thm:pri} is the \emph{a priori} estimate of Theorem \ref{thm3}. 
\end{remark}

The proof of Theorem \ref{thm:pri} will be given in Sections \ref{sec:maximum}-\ref{sec:proof}. We first list some facts which will be used later. In view of \eqref{defrho2} and \eqref{def2}, we have
\begin{equation}\label{factone}
    \forall\ m\ge 0\ \textrm{and}\ r\in\mathbb{R},\quad \frac{d}{dt}L_{\rho,m,r}=-\beta(m+1)L_{\rho,m,r},
\end{equation}
and
\begin{equation}\label{facttwo}
    \forall\ 0\leq t\leq T=\beta^{-1},\quad e^{-1}\rho_0\leq \rho(t)\leq \rho_0.
\end{equation}
 We will use the following Young's inequality for discrete convolution:
\begin{equation}\label{young}
\Big{[}\sum^{+\infty}_{m=0}\Big{(}\sum^m_{j=0}p_jq_{m-j}\Big{)}^2\Big{]}^{\frac{1}{2}}\leq \Big{(}\sum^{+\infty}_{m=0}q_m^2\Big{)}^{\frac{1}{2}}\sum^{+\infty}_{j=0}p_j,
\end{equation}
where $\{p_j\}_{j\ge 0}$ and $\{q_j\}_{j\ge 0}$ are positive sequences.

In the following discussion, to simplify the notations, we will use $C$ to denote a generic constants which may vary from line to line and depend only on $\sigma$, $r$, $\rho_0$ and the Sobolev embedding constants, but are independent of $C_*$, $\beta$, M and $\delta$.

\section{Maximum and minimum principle for $\partial_y^2u$}\label{sec:maximum}
This section is dedicated to proving the maximum and minimum principle for $\partial_y^2u$, that is, justifying the validity of \eqref{ret:convex} under the assumption as in Theorem \ref{thm:pri}. This can be stated as follows.
\begin{proposition}\label{prop:principle}
Under the same assumption as given in Theorem \ref{thm:pri}, conclusion \eqref{ret:convex} holds, provided $\beta$ is sufficiently large.
\end{proposition}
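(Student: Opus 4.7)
The plan is to derive a linear parabolic equation satisfied by $w := \partial_y^2 u$, control its source and lateral boundary values in $L^\infty$ using the a priori assumption \eqref{ass:pri}, and close the bound through a Gr\"onwall argument on the short window $[0,T] = [0,\beta^{-1}]$ by choosing $\beta$ large.

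I would begin by applying $\partial_y^2$ to the tangential velocity equation in \eqref{hyMHD}. Since $\partial_y p = 0$ the pressure drops out; after using $\partial_y v = -\partial_x u$ and $\partial_y g = -\partial_x f$ to collapse the quadratic terms, a direct computation produces
\begin{equation*}
\partial_t w + u\partial_x w + v\partial_y w - \partial_y^2 w = (\partial_x u)\,w + F,
\end{equation*}
where
\begin{equation*}
F = -\partial_y u\,\partial_x\partial_y u + f\,\partial_x\partial_y^2 f + g\,\partial_y^3 f + \partial_y f\,\partial_x\partial_y f - \partial_x f\,\partial_y^2 f.
\end{equation*}
Evaluating the $u$-equation on $\{y=0,1\}$ (where $u = v = g = \partial_y f = 0$) gives the explicit lateral trace $w|_{y=0,1} = \partial_x p - f\,\partial_x f|_{y=0,1}$, with $\partial_x p$ determined by the standard hydrostatic integration of the $u$-equation across $[0,1]$.

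Next I would bound $\|\partial_x u\|_{L^\infty} + \|F\|_{L^\infty} + \|w\|_{L^\infty(\{y=0,1\})}$ by a constant $C_0 = C_0(C_*,M)$ on $[0,T]$. The anisotropic embedding $\|h\|_{L^\infty}\lesssim \|h\|_{L^2_y H^s_x}^{1/2}\|\partial_y h\|_{L^2_y H^s_x}^{1/2}+\|h\|_{L^2_y H^s_x}$ with $s>\tfrac12$ reduces each factor appearing in $F$ and in the boundary trace to tangential Gevrey norms dominated by $\mathcal X_\rho$ (or by $\int_0^T\!\mathcal Y_\rho\,dt$ when one of the factors carries an extra $\partial_x$, as in $\partial_x\partial_y^2 f$). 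The only factor not already in the energy is $\partial_y^3 f$, which I would eliminate by differentiating the $f$-equation once in $y$, yielding
\begin{equation*}
\partial_y^3 f = \eta\,\partial_t^2\partial_y f + \partial_t\partial_y f + \partial_y\!\left(u\partial_x f+v\partial_y f\right) - \partial_y\!\left(f\partial_x u+g\partial_y u\right),
\end{equation*}
whose right-hand side is entirely contained in $\mathcal X_\rho$; this also explains why \eqref{def:energy} tracks high-order time derivatives of $f$.

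Finally, with $W(t) := \sup_\Omega w(t,\cdot)$ and $V(t) := \inf_\Omega w(t,\cdot)$, interior extrema combined with $\partial_x w = \partial_y w = 0$ and $\mp\partial_y^2 w \le 0$ give the a.e.\ inequalities $\tfrac{d}{dt}W \le C_0 W + C_0$ and $\tfrac{d}{dt}V \ge -C_0 V - C_0$, while boundary extrema satisfy $|W|,|V|\le C_0$ by the trace bound above. Gr\"onwall on $[0,T]$ then yields $|W(t)-W(0)|+|V(t)-V(0)|\le C(C_0,\delta)(e^{C_0 T}-1)$, and since \eqref{ass:convex} furnishes $V(0)\ge 2\delta$ and $W(0)\le \tfrac{1}{2\delta}$, taking $\beta$ large enough in terms of $C_*, M, \delta$ forces $V(t)\ge\delta$ and $W(t)\le \tfrac{1}{\delta}$ throughout $[0,T]$, which is exactly \eqref{ret:convex}. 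The main obstacle is the $L^\infty$ control of the high-order magnetic terms $g\,\partial_y^3 f$ and $f\,\partial_x\partial_y^2 f$ in $F$: these third-order derivatives are not directly in $\mathcal X_\rho$ and must be traded for time derivatives via the $f$-equation, producing many cross terms that need to be simultaneously absorbed into $\mathcal X_\rho$ and the $\beta$-weighted integral of $\mathcal Y_\rho$ provided by \eqref{ass:pri}.
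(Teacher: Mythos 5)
Your overall plan --- a parabolic equation for $w=\partial_y^2 u$ with drift $\partial_x u$, the hydrostatic-pressure trace formula for $w|_{y=0,1}$, $L^\infty$ control of source and boundary data, and a maximum-principle/Gr\"onwall argument on $[0,T]=[0,\beta^{-1}]$ --- is the same scheme the paper uses, citing Proposition~6.1 of G\'erard-Varet, Masmoudi and Vicol for the maximum principle and proving the $L^2_tL^\infty_{x,y}$ source bounds in Lemma~\ref{lem:principle}. Two steps, however, would fail as written. First, your inventory of ``factors not already in the energy'' is off: $\partial_y^2 u$ is not controlled pointwise in $t$ by $\mathcal{X}_\rho$ (only $\int_0^T\mathcal{Z}_\rho\,dt$ sees $\|\partial_y^2 u\|_{\rho,r}^2$), yet the anisotropic embedding you invoke for $\partial_y u$, $\partial_x\partial_y u$ and the drift $\partial_x u$ already requires $\|\partial_y^2 u\|_{H^2_xL^2_y}$; this is recovered from $\partial_y^2 u(t)=\partial_y^2 u(0)+\int_0^t\partial_t\partial_y^2 u\,ds$ as in \eqref{est:dy2u}, not from $\mathcal{X}_\rho$. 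Likewise $\partial_y^3 f$ is \emph{not} the only missing factor: putting $\partial_y^3 f$ in $L^\infty$ needs $\partial_y^4 f$ in $L^2_y$, hence a second application of $\partial_y$ to the $f$-equation, which in turn drags in $\partial_y^3 u$ and forces another pass through the $u$-equation (see \eqref{est:dy3u}). Conversely, $f\partial_x\partial_y^2 f$ is harmless --- $\partial_y^2 f$ lies in $\mathcal{X}_{\rho,0}$ and the extra $\partial_x$ is free in the Gevrey scale --- so the obstruction is the count of $\partial_y$'s, not the extra $\partial_x$.

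Second, your boundary case does not close. If the infimum of $w$ sits on $\{y=0,1\}$ at time $t$, the bound $|V(t)|\le C_0$ with $C_0=C_0(C_*,M)$ possibly large gives no lower bound $V(t)\ge\delta$, and it is precisely here that the Gr\"onwall estimate (which only controls interior extrema) cannot help. What is actually needed is that the trace $w|_{y=0,1}=\partial_x p|_{y=0,1}-(f\partial_x f)|_{y=0,1}$ deviates little from its initial value over $[0,T]$; this variation is again controlled by the $L^2_tL^\infty$ source bounds and $T=\beta^{-1}$, and only then does the combination of interior Gr\"onwall and boundary-variation control propagate $\delta\le V\le W\le\delta^{-1}$. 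This boundary-variation estimate is exactly the content of the G\'erard-Varet--Masmoudi--Vicol lemma that the paper defers to, and it is what you should either cite or re-derive instead of a raw $L^\infty$ trace bound.
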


\begin{proof}
The quantity $\partial_y^2u$ obeys a (degenerate) parabolic equation with Dirichlet boundary conditions:
\begin{equation*}
    \left\{
    \begin{aligned}
&(\partial_t+u\partial_x+v\partial_y-\partial_y^2)\partial_y^2u+(\partial_xu)\partial_y^2u=(\partial_yu)\partial_x\partial_yu+\partial_y(f\partial_x\partial_yf+g\partial_y^2f),\\
&\partial_y^2u|_{y=0,1}=-2\int^1_0u\partial_xudy+2\int^1_0f\partial_xfdy+\int^1_0\partial_y^2udy-\int^1_0\int_{\mathbb{T}}\partial_y^2udxdy.
    \end{aligned}
    \right.
\end{equation*}
As shown in \cite[Proposition 6.1]{MR4149066}, the key to deduce the convexity of $u$ is to derive the $L_t^2L_{x,y}^{\infty}$ estimates on $(\partial_yu)\partial_x\partial_yu$ and $\partial_y(f\partial_x\partial_yf+g\partial_y^2f)$. To avoid redundancy, we present these estimates in Lemma \ref{lem:principle} below and omit the proof details for Proposition \ref{prop:principle}.  
\end{proof}

\begin{lemma}\label{lem:principle}
 Under the same assumption as given in Theorem \ref{thm:pri}, it holds that
 \begin{equation}\label{est:dy2u}
       \sup_{t\in [0,T]}\norm{\partial_y^2u}_{\rho,r-\sigma}\leq C\sqrt{C_*M},   
 \end{equation}
 and
 \begin{equation}\label{est:principle}
\int^T_0\big(\norm{(\partial_yu)\partial_x\partial_yu}_{L^\infty}^2+\norm{\partial_y(f\partial_x\partial_yf+g\partial_y^2f)}_{L^\infty}^2\big)dt\leq CC_*^4M^4,
 \end{equation}
 provided $\beta$ is sufficiently large.
\end{lemma}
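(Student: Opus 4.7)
The plan is to establish the two bounds separately. For \eqref{est:dy2u}, the idea is to invert the tangential velocity equation of \eqref{hyMHD} to read off $\partial_y^2 u$ algebraically,
\[
\partial_y^2 u = \partial_t u + u\partial_x u + v\partial_y u + \partial_x p - f\partial_x f - g\partial_y f,
\]
and to control each right-hand term in the $\|\cdot\|_{\rho, r-\sigma}$ norm. The linear term $\partial_t u$ is dominated by $\sqrt{\mathcal{X}_{\rho,1}(t)}$, since the weight $r-\sigma+\tfrac12$ built into $\mathcal{X}_{\rho,1}$ exceeds $r-\sigma$. For the quadratic terms, I would apply Young's convolution inequality \eqref{young} to $\partial_x^m(uv')$ and place the $L^\infty_y$ factor via the 1D Sobolev embedding $H^1_y\hookrightarrow L^\infty_y$ on $[0,1]$, reducing everything to $L^2(\Omega)$ norms controlled by $\mathcal{X}_\rho$. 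The $y$-independent pressure gradient is obtained by integrating the $u$-equation over $y\in[0,1]$; using no-slip boundary conditions, incompressibility, and the identities $\int_0^1 v\partial_y u\,dy=\int_0^1 u\partial_x u\,dy$ and $\int_0^1 g\partial_y f\,dy=\int_0^1 f\partial_x f\,dy$ (via $g|_{y=0,1}=0$), one arrives at an expression for $\partial_x p$ involving only boundary traces of $\partial_y u$, a time derivative of $\int_0^1 u\,dy$, and $\partial_x$ of $\int_0^1 u^2\,dy$ and $\int_0^1 f^2\,dy$, each controllable by $\mathcal{X}_\rho$ together with a trace inequality on $[0,1]$.

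For \eqref{est:principle}, the plan is to convert each $L^\infty_{x,y}$ factor into an $L^2(\Omega)$ norm of derivatives via successive 1D Sobolev embeddings in $x$ on $\mathbb{T}$ and in $y$ on $[0,1]$, producing a tame estimate of the form
\[
\|h\|_{L^\infty(\Omega)}^2 \leq C\bigl(\|h\|_{L^2}^2 + \|\partial_x h\|_{L^2}^2 + \|\partial_y h\|_{L^2}^2 + \|\partial_x\partial_y h\|_{L^2}^2\bigr).
\]
Applied to the factors in $(\partial_y u)\partial_x\partial_y u$, this requires $L^2$ norms up to $\partial_x^2\partial_y^2 u$ and $\partial_x^3\partial_y^2 u$, which are available from $\mathcal{X}_\rho + \mathcal{Z}_\rho$ combined with \eqref{est:dy2u}, the extra $x$-derivatives consuming free weight in the Gevrey indices (the assumption $r\geq 10$ provides the required margin). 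For the magnetic contribution, I would expand $\partial_y(f\partial_x\partial_y f + g\partial_y^2 f)$ by Leibniz, substitute $\partial_y g = -\partial_x f$ from incompressibility, and note that the only term containing $\partial_y^3 f$ is $g\partial_y^3 f$. The derivative $\partial_y^3 f$, absent from the energy functionals, is recovered from $\partial_y$ of the $f$-equation,
\[
\partial_y^3 f = \partial_y\bigl(\eta\partial_t^2 f + \partial_t f + u\partial_x f + v\partial_y f - f\partial_x u - g\partial_y u\bigr),
\]
which expresses it through $\partial_t^2\partial_y f$ (bounded by $\mathcal{Z}_{\rho,1}$) and products involving at most $\partial_y^2 u, \partial_y^2 f$ (covered by $\mathcal{X}_\rho$ together with \eqref{est:dy2u}). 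The final time integral is handled by Cauchy--Schwarz: one factor is placed in $L^\infty_t$ using $\sup_t \mathcal{X}_\rho \leq 2C_*M$ together with \eqref{est:dy2u}, and the other in $L^2_t$ via $\int_0^T(\mathcal{Y}_\rho + \mathcal{Z}_\rho)\,dt \leq 4C_*M$, which produces the stated $C_*^4 M^4$ bound.

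The main obstacle is the careful bookkeeping of Gevrey index shifts. Each inversion of a parabolic equation trading two $y$-derivatives for a $\partial_t$ consumes $\sigma$ units of weight, as built into the $r-k\sigma$ structure of $\mathcal{X}_{\rho,k}$, $\mathcal{Y}_{\rho,k}$, $\mathcal{Z}_{\rho,k}$, and each product expansion via \eqref{young} costs a further fractional shift. The structural split between the weight $r-k\sigma+\tfrac12$ in $\mathcal{X}_{\rho,k}$ and $r-k\sigma+1$ in $\mathcal{Y}_{\rho,k}$ is precisely designed so that in any bilinear $L^\infty$ product, one factor can be placed in $L^\infty_t$ via $\mathcal{X}_\rho$ while the other lands in $L^2_t$ via $\mathcal{Y}_\rho$ or $\mathcal{Z}_\rho$. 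Verifying that every intermediate norm remains dominated by the a priori bound \eqref{ass:pri}---in particular that the weights never become negative after all reductions---is the principal technical burden and is the reason $r$ is required to be at least $10$.
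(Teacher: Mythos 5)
Your argument for \eqref{est:principle} is essentially the paper's: both express $\partial_y^3f$, $\partial_y^3u$ and $\partial_y^4f$ via the evolution equations, bound the resulting products through one-dimensional Sobolev embeddings, the \emph{a priori} bound $\sup_t\mathcal{X}_\rho\leq 2C_*M$, and \eqref{est:dy2u}, and then integrate over $[0,T]$ using $T\leq 1$; whether one splits the time integral by Cauchy--Schwarz or simply pulls out an $L^\infty_t$ bound is immaterial.

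Your argument for \eqref{est:dy2u}, however, has a genuine gap and cannot produce the stated bound. Reading $\partial_y^2u$ off the velocity equation and bounding each term in $\norm{\cdot}_{\rho,r-\sigma}$ gives at best $CC_*M$, not $C\sqrt{C_*M}$: the term $\partial_tu$ is indeed $\lesssim\sqrt{\mathcal{X}_\rho}\lesssim\sqrt{C_*M}$, but $u\partial_xu$, $v\partial_yu$, $f\partial_xf$, $g\partial_yf$ and the quadratic part of $\partial_xp$ are each products of two factors of size $\sqrt{\mathcal{X}_\rho}$, hence contribute $O(\mathcal{X}_\rho)=O(C_*M)$, which dominates since $C_*M\geq 1$. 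You also do not address the circularity hidden in the pressure: the expression for $\partial_xp$ contains $[\partial_yu]_{y=0}^{y=1}=\int_0^1\partial_y^2u\,dy$, and the trace inequality on $[0,1]$ reintroduces $\norm{\partial_y^2u}_{\rho,r-\sigma}^{1/2}$ on the right-hand side of the inequality you are trying to prove. The paper never inverts the equation; it writes $\partial_y^2u(t)=\partial_y^2u(0)+\int_0^t\partial_t\partial_y^2u(s)\,ds$, uses $\norm{\partial_y^2u(0)}_{\rho_0,r-\sigma}\leq C\sqrt{M}$ from the definition of $M$ and \eqref{note:norm}, and applies Cauchy--Schwarz in time with $T=\beta^{-1}$ to get
\[
\int_0^t\norm{\partial_t\partial_y^2u}_{\rho,r-\sigma}\,ds\leq\beta^{-1/2}\Big(\int_0^T\norm{\partial_t\partial_y^2u}_{\rho,r-\sigma}^2\,dt\Big)^{1/2}\leq\beta^{-1/2}\sqrt{2C_*M},
\]
since $\norm{\partial_t\partial_y^2u}_{\rho,r-\sigma}^2$ is a component of $\mathcal{Z}_{\rho,1}$ controlled by \eqref{ass:pri}. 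The square root is gained precisely because the fundamental theorem of calculus converts the $\sup_t$ bound into the initial value (of size $\sqrt{M}$, not $\sqrt{C_*M}$) plus the $L^1_t$ norm of $\partial_t\partial_y^2u$, which Cauchy--Schwarz turns into an $L^2_t$ quantity already available in the a priori assumption. A pointwise-in-time algebraic inversion cannot access this mechanism and inevitably loses a factor of $\sqrt{C_*M}$.
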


\begin{proof}
  We first write that
\begin{align*}
\partial_y^2u(t)=\partial_y^2u(0)+\int^t_0\partial_t\partial_y^2u(s)ds.
\end{align*}
Then for any $0\leq t\leq T=\beta^{-1}$, we have, recalling $M$ is given in \eqref{def:M},
\begin{equation*}
\begin{aligned}
    \norm{\partial_y^2u(t)}_{\rho,r-\sigma}&\leq \norm{\partial_y^2u(0)}_{\rho_0,r-\sigma}+\int^t_0\norm{\partial_t\partial_y^2u(s)}_{\rho,r-\sigma}ds\\
    \leq \sqrt{M}+&\sqrt{T}\left(\int^T_0\norm{\partial_t\partial_y^2u(t)}_{\rho,r-\sigma}^2dt\right)^\frac{1}{2}\leq \sqrt{M}+\beta^{-\frac{1}{2}}\sqrt{C_*M}\leq C\sqrt{C_*M}.
\end{aligned}
\end{equation*}
Thus, assertion \eqref{est:dy2u} holds.

It remains to prove assertion \eqref{est:principle} and we first note that for any $0\leq t\leq T=\beta^{-1}$,
\begin{equation*}
    \norm{\partial_yu}_{H_x^2H_y^1}\leq C\mathcal{X}_{\rho}^\frac{1}{2}+C\norm{\partial_y^2u}_{\rho,r-\sigma}\leq C\sqrt{C_*M},
\end{equation*}
the last inequality using assumption \eqref{ass:pri} as well as \eqref{est:dy2u}. This with Sobolev embedding inequality and $T=\beta^{-1}\leq 1$ gives
\begin{equation}\label{principle1}
\int^T_0\norm{(\partial_yu)\partial_x\partial_yu}_{L^\infty}^2dt\leq C\int^T_0\norm{\partial_yu}_{H_x^2H_y^1}^4dt\leq CTC_*^2M^2\leq CC_*^2M^2. 
\end{equation}
On the other hand,  by assumption \eqref{ass:pri} and the definition of $\mathcal{X}_\rho$ in \eqref{def:energytwo}, we note that
\begin{align*}
 \norm{\partial_y(f\partial_x\partial_yf+g\partial_y^2f)}_{L^\infty}&\leq C\norm{f}_{H_x^2H_y^2}\big(\norm{f}_{H_x^2H_y^2}+\norm{\partial_y^3f}_{H_x^2L_y^2}+\norm{\partial_y^4f}_{H_x^2L_y^2}\big)\\
 &\leq CC_*M+C\sqrt{C_*M}\big(\norm{\partial_y^3f}_{H_x^2L_y^2}+\norm{\partial_y^4f}_{H_x^2L_y^2}\big).
\end{align*}
To deal with  $\norm{\partial_y^3f}_{H_x^2L_y^2}$, observing $\partial_y^3f=\partial_t^2\partial_yf+\partial_t\partial_yf-\partial_y(f\partial_xu+g\partial_yu)+\partial_y(u\partial_xf+v\partial_yf),$
we use assumption \eqref{ass:pri} and \eqref{est:dy2u} to obtain
\begin{align*}
\norm{\partial_y^3f}_{H_x^2L_y^2}\leq&\norm{\partial_t\partial_yf}_{H_x^2L_y^2}+\norm{\partial_t^2\partial_yf}_{H_x^2L_y^2}\\
&+\norm{\partial_y(f\partial_xu+g\partial_yu)}_{H_x^2L_y^2}+\norm{\partial_y(u\partial_xf+v\partial_yf)}_{H_x^2L_y^2}\\
\leq&C\sqrt{C_*M}+CC_*M\leq CC_*M,
\end{align*}
the last inequality holding because of $C_*M\ge 1$. For the term $\norm{\partial_y^4f}_{H_x^2L_y^2}$, we repeat a similar argument to deduce that
\begin{equation}\label{est:dy3u}
    \norm{\partial_y^3u}_{H_x^2L_y^2}\leq CC_*M,
\end{equation}
and furthermore, observing $\partial_y^4f=\partial_t^2\partial_y^2f+\partial_t\partial_y^2f+\partial_y^2(u\partial_xf+v\partial_yf)-\partial_y^2(f\partial_xu+g\partial_yu),$
\begin{equation*}
    \norm{\partial_y^4f}_{H_x^2L_y^2}\leq CC_*^\frac{3}{2}M^\frac{3}{2}.
\end{equation*}
Combining with these estimates above, we have
\begin{equation*}
     \norm{\partial_y(f\partial_x\partial_yf+g\partial_y^2f)}_{L^\infty}\leq CC_*M+CC_*^2M^2\leq CC_*^2M^2.
\end{equation*}
Consequently,
\begin{equation} \label{principle2}
\int^T_0\norm{\partial_y(f\partial_x\partial_yf+g\partial_y^2f)}_{L^\infty}^2dt\leq CTC_*^4M^4\leq CC_*^4M^4.
\end{equation}
Then assertion \eqref{est:principle} holds by combining \eqref{principle1} and \eqref{principle2}. This completes the proof of Lemma \ref{lem:principle}.
\end{proof}

\section{Estimate on $\mathcal{X}_{\rho,0}$}\label{sec:x0}
This part is devoted to deriving the estimate for $\mathcal{X}_{\rho,0}$, which can be stated as follows.
\begin{proposition}\label{prop:x0}
 Under the same assumption as given in Theorem \ref{thm:pri}, it holds that
 \begin{equation}\label{est:x0}
 \begin{aligned}
   &\sup_{t\in[0,T]}\mathcal{X}_{\rho,0}+\beta\int^T_0\mathcal{Y}_{\rho,0}dt+\int^T_0\mathcal{Z}_{\rho,0}dt\\
 &\leq C\delta^{-2}\int^T_0\norm{\phi}_{\rho,r+\sigma}^2dt+C\delta^{-3}C_*^\frac{3}{2}M^\frac{3}{2}\int^T_0\mathcal{Y}_\rho dt+C\delta^{-3}C_*^\frac{3}{2}M^\frac{3}{2}\int^T_0\mathcal{Y}_\rho^\frac{1}{2}\mathcal{Z}_\rho^\frac{1}{2} dt\\
 &\quad+C\delta^{-3}C_*^\frac{3}{2}M^\frac{3}{2}\int^T_0\mathcal{Y}_\rho^\frac{1}{4}\mathcal{Z}_\rho^\frac{3}{4}dt+C\delta^{-2}M,
  \end{aligned}
 \end{equation}
 provided $\beta$ is sufficiently large. Recall that $\mathcal{X}_{\rho,0},\mathcal{Y}_{\rho,0}$ and $\mathcal{Z}_{\rho,0}$ are defined in \eqref{def:energy}, $\mathcal{Y}_\rho$ and $\mathcal{Z}_\rho$ are defined in \eqref{def:energytwo} and $\phi$ is given in \eqref{def:phi}.
\end{proposition}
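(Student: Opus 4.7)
The plan is to perform, for each quantity $u,\partial_y u, f,\partial_t f,\partial_y f,\partial_t\partial_y f,\partial_y^2 f$ appearing in $\mathcal{X}_{\rho,0}$, a $\partial_x^m$-differentiated $L^2$ energy estimate in the Gevrey-weighted norm prescribed by \eqref{def:energy}, and to sum the resulting inequalities into \eqref{est:x0}. For the $u$-equation I would multiply its $\partial_x^m$-differentiated form by $L_{\rho,m,r+1/2}^2\,\partial_x^m u$, integrate over $\Omega$, and sum in $m\ge 0$; for the wave-type $f$-equation I would pair with $2\partial_t\partial_x^m f+\partial_x^m f$ (and similarly with $\partial_y$-differentiations) so that both the friction $\partial_t f$ and the dissipation $-\partial_y^2 f$ are exploited. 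Differentiating the weight in time via \eqref{factone} upgrades the $r$-index by one and yields the $\beta\mathcal{Y}_{\rho,0}$ gain; integrating by parts on $-\partial_y^2 u$ and $-\partial_y^2 f$ against the Dirichlet/mixed boundary conditions in \eqref{hyMHD} produces the $\mathcal{Z}_{\rho,0}$ dissipation. The pressure is eliminated using $\partial_y p=0$ after integrating the $u$-equation in $y\in(0,1)$ with $\int_0^1\partial_x u\,dy=0$; the resulting nonlocal contribution to $\partial_x p$ has the same structure as the volume nonlinearities.

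The nonlinear products $\partial_x^m(u\partial_x u)$, $\partial_x^m(v\partial_y u)$, $\partial_x^m(f\partial_x f)$, $\partial_x^m(g\partial_y f)$ and their magnetic counterparts are expanded by Leibniz and bounded via the Young convolution inequality \eqref{young}. For every split $j+(m-j)$ in which one factor carries few tangential derivatives, Sobolev embedding in $x$ controls that factor by the a priori bound \eqref{ass:pri}, delivering a prefactor $\sqrt{C_*M}$ and contributing to the tame term $C\delta^{-3}C_*^{3/2}M^{3/2}\int\mathcal{Y}_\rho\,dt$. Interpolation in the normal variable between $L_y^2$ and $H_y^1$, applied whenever the small factor carries a $\partial_y$, yields the two interpolated dissipation integrals $\mathcal{Y}_\rho^{1/2}\mathcal{Z}_\rho^{1/2}$ and $\mathcal{Y}_\rho^{1/4}\mathcal{Z}_\rho^{3/4}$ on the right-hand side of \eqref{est:x0}.

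The entire loss of one tangential derivative is concentrated in the endpoint Leibniz contribution $(\partial_x^m v)\partial_y u=-(\partial_x^{m+1}\phi)\partial_y u$, since $v=-\partial_x\phi$ from \eqref{def:phi}, and in the structurally identical magnetic products. To handle it I would follow the boundary-decomposition/good-unknown strategy of G\'erard-Varet--Masmoudi--Vicol \cite{MR4149066,MR4803680}: write $\partial_y u=\alpha\,\partial_y^2 u$ with $\alpha=\partial_y u/\partial_y^2 u$, integrate by parts in $y$, and use the convexity lower bound $\partial_y^2 u\ge\delta$ already provided by Proposition~\ref{prop:principle} together with the uniform bound on $\partial_y^2 u$ in \eqref{est:dy2u}. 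This trades the missing tangential derivative for a loss of one Gevrey index and leaves a remainder bounded by $C\delta^{-2}\int_0^T\norm{\phi}_{\rho,r+\sigma}^2\,dt$, which is the first term on the right-hand side of \eqref{est:x0}; each further use of the multiplier $\alpha$ costs a factor of $\delta^{-1}$, so the mixed magnetic products produce the $\delta^{-3}$ coefficients.

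Finally, integrating over $[0,T]=[0,\beta^{-1}]$ with $e^{-1}\rho_0\le\rho(t)\le\rho_0$ from \eqref{facttwo}, I would take $\beta$ large enough that any error of the form $\beta^{-1}\mathcal{Y}_\rho$ or $\beta^{-1}\mathcal{Z}_\rho$ generated by Young's inequality is reabsorbed into the $\beta\mathcal{Y}_{\rho,0}$ and $\mathcal{Z}_{\rho,0}$ on the left; the contribution from $t=0$ is collected into the final $C\delta^{-2}M$ term using \eqref{def:M} and \eqref{note:norm}. The main obstacle is the top-order step just sketched. In contrast with the purely parabolic hydrostatic MHD system, the hyperbolic $f$-equation offers no cancellation of the loss of derivative coming from $v\partial_y u$ and $g\partial_y u$, so the Lorentz products must be carried with their full regularity via the convex-data trick; it is precisely the interpolated dissipation terms $\mathcal{Y}_\rho^{1/2}\mathcal{Z}_\rho^{1/2}$ and $\mathcal{Y}_\rho^{1/4}\mathcal{Z}_\rho^{3/4}$ generated in this step that ultimately force the Gevrey index to $\sigma\le 7/6$, as emphasized in the remark following Theorem~\ref{thm3}.
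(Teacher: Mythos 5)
Your overall framework is in the right spirit---separate $L^2$ energy estimates for $u$, $\partial_y u$, and the magnetic quantities, the convex weight trick for the vorticity equation, and Young's convolution to absorb the Leibniz sums---but there are several concrete discrepancies with the argument that actually closes the estimate.

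\textbf{The boundary term in the vorticity estimate is missing.} When you pair the $\partial_x^m\partial_y$-differentiated velocity equation with the weight $\frac{\partial_x^m\partial_y u}{\partial_y^2 u}$, the integration by parts on $\partial_x^m\partial_y^4\phi$ leaves a boundary contribution $\int_{\mathbb{T}}\frac{(\partial_x^m\partial_y^2 u)\partial_x^m\partial_y u}{\partial_y^2 u}\big|_{y=0}^{y=1}\,dx$. This is the term $T_7$ in the paper, and it is \emph{not} lower order: the trace $\partial_y^2 u|_{y=0,1}=\partial_x p|_{y=0,1}-(f\partial_x f)|_{y=0,1}$ must be controlled via the pressure formula $\partial_x p|_{y=0,1}=-2\int_0^1 u\partial_x u\,dy+2\int_0^1 f\partial_x f\,dy+\dots$, and it is precisely through this formula (after rewriting $\int_0^1 u\partial_x^{m+1}u\,dy=-\int_0^1(\partial_y u)\partial_x^{m+1}\phi\,dy$) that the $\norm{\phi}_{\rho,r+\sigma}^2$ appears in the vorticity estimate. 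Your claim that the nonlocal pressure contribution ``has the same structure as the volume nonlinearities'' is not accurate: the pressure term vanishes exactly in the $L^2$ estimate for $u$ (by $\partial_x u+\partial_y v=0$, $\partial_y p=0$), but it reappears at the boundary of the $\partial_y u$ estimate and must be handled separately.

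\textbf{The attribution of the $\delta^{-3}$ prefactor is incorrect, and the $f$-equation does not need the convex weight.} The convex weight $1/\partial_y^2 u$ is applied \emph{only} to the $\partial_y u$ equation; the $\delta^{-3}$ arises because the multiplier costs one power of $\delta^{-1}$ overall and the terms involving $\partial_t\partial_y^2 u$, $\partial_y^3 u$ and the transport of $\partial_y^2 u$ carry $(\partial_y^2 u)^2$ in the denominator, costing an additional $\delta^{-2}$. Your statement that ``the mixed magnetic products produce the $\delta^{-3}$ coefficients'' through repeated use of the multiplier is not how the exponent is generated. For the $f$-equation, the paper pairs with $\partial_t\partial_x^m f$ (not $2\partial_t\partial_x^m f+\partial_x^m f$) and recovers the $\norm{f}$ norm separately from $\frac{d}{dt}\norm{\partial_x^m f}^2$; more importantly, no convex weight is used there at all. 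The loss of one tangential derivative in $v\partial_y f$ and $g\partial_y u$ is absorbed instead by the built-in $\sigma$-shift in the Gevrey indices assigned to $f$ and its derivatives in \eqref{def:energy} (e.g.\ $f$ carries $r-(k+1)\sigma+\tfrac52$ versus $r-k\sigma+\tfrac12$ for $u$). Finally, the cancellation in the top-order $(\partial_x^m v)\partial_y^2 u$ term is algebraic---dividing by $\partial_y^2 u$ reduces it to $(\partial_x^m v,\partial_x^m\partial_y u)_{L^2}=0$ by incompressibility---not a consequence of ``integrating by parts in $y$'' or of writing $\partial_y u=\alpha\partial_y^2 u$, so that phrasing should be tightened.
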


The proof of Proposition \ref{prop:x0} is highly non-trivial, and we will establish it via the following three lemmas.

\begin{lemma}\label{lem:u}
   Under the same assumption as given in Theorem \ref{thm:pri}, it holds that
 \begin{multline*}
   \sup_{t\in[0,T]}\norm{u}_{\rho,r+\frac{1}{2}}^2+\beta\int^T_0\norm{u}_{\rho,r+1}^2dt+\int^T_0\norm{\partial_yu}_{\rho,r+\frac{1}{2}}^2dt\\
   \leq CC_*M\int^T_0\mathcal{Y}_{\rho}dt+C\sqrt{C_*M}\int^T_0\mathcal{Y}_\rho^\frac{1}{2}\mathcal{Z}_\rho^\frac{1}{2}dt+C\int^T_0\norm{\phi}^2_{\rho,r+\sigma}dt+CM,
 \end{multline*}
 provided $\beta$ is sufficiently large. Recall $\mathcal{Y}_\rho$ and $\mathcal{Z}_\rho$ are defined in \eqref{def:energytwo} and $\phi$ is given in \eqref{def:phi}.  
\end{lemma}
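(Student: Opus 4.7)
\medskip

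\noindent\textbf{Proof plan for Lemma \ref{lem:u}.} The plan is to run a weighted $L^2$ energy estimate on the tangential velocity equation (the first equation of \eqref{hyMHD}) tested in the Gevrey norm $\|\cdot\|_{\rho,r+\frac12}$, and to isolate the pressure contribution into the stream-function term $\|\phi\|_{\rho,r+\sigma}^2$ that appears on the right-hand side. Concretely, for each $m\ge 0$ I apply $\partial_x^m$ to the equation, multiply by $L_{\rho,m,r+\frac12}^2\,\partial_x^m u=L_{\rho,m,r+\frac12}\,u_m$, integrate over $\Omega$, and sum over $m$. Using \eqref{factone}, the time-weight produces the dissipation-like gain $\beta\|u\|_{\rho,r+1}^2$ (shifting $r+\frac12\mapsto r+1$ via the factor $(m+1)$), while the viscous term yields $\|\partial_y u\|_{\rho,r+\frac12}^2$ after integration by parts, the boundary contributions vanishing because $u|_{y=0,1}=0$.

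\medskip

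\noindent The next step is the nonlinear transport $u\partial_x u+v\partial_y u$. I first split the Leibniz expansion of $\partial_x^m(u\partial_x u+v\partial_y u)$ into the symmetric piece $u\partial_x u_m+v\partial_y u_m$, which vanishes against $u_m$ by $\partial_x u+\partial_y v=0$ together with the vanishing of $u$ on $\{y=0,1\}$, and a commutator whose typical term is $L_{\rho,m,r+\frac12}\binom{m}{j}(\partial_x^{j}u)\,\partial_x^{m-j}\partial_x u$ (respectively involving $v$ and $\partial_y u$). These are handled by the discrete Young inequality \eqref{young} combined with an $L^\infty_y H^2_x$--$L^2$ splitting: the low-frequency factor is controlled by $(C_*M)^{1/2}$ via the standing assumption \eqref{ass:pri}, while the high-frequency factor is paired against a full derivative and therefore contributes either $\mathcal Y_\rho$ (when one derivative lands tangentially) or a mixed expression $\mathcal Y_\rho^{1/2}\mathcal Z_\rho^{1/2}$ (when it lands on $\partial_y u$, where the $\mathcal Z_\rho$ gain from the viscous dissipation is needed). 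The $v\partial_y u$ piece requires expressing $v=-\partial_x\phi$ and using $\partial_x^{j}v=-\partial_x^{j+1}\phi$, but after the standard $\partial_y$-integration-by-parts the commutator is of the same type as the $u\partial_x u$ commutator.

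\medskip

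\noindent The main obstacle, and the reason the quantity $\int_0^T\|\phi\|_{\rho,r+\sigma}^2\,dt$ shows up on the right-hand side, is the pressure term $\int\partial_x p\cdot u_m$. Since $\partial_y p=0$, the pressure depends only on $(t,x)$ and must be reconstructed from the equation together with the Dirichlet conditions on $u$. Following the boundary-decomposition/de-pressurisation strategy of \cite{MR4149066,MR4803680}, I would integrate the first equation in $y$ over $(0,1)$, use $\int_0^1 u\,dy$ and the magnetic source averages, and solve for $\partial_x p$. The resulting expression involves $\partial_y u|_{y=0,1}$, averages of the convection and of $f\partial_x f+g\partial_y f$, and a Gevrey-$\sigma$ loss of tangential regularity that is exactly what reserves the room $r+\sigma$ (rather than $r+\frac12$) in the $\phi$ norm. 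The contribution $\int\partial_x p\cdot u_m$ is then either absorbed into the left-hand side modulo the $\int\|\phi\|_{\rho,r+\sigma}^2$ source term, or estimated by $\mathcal Y_\rho^{1/4}\mathcal Z_\rho^{3/4}$-type quantities after an interpolation between $\|\partial_y u\|_{\rho,r+\frac12}$ (from $\mathcal Y_\rho$) and $\|\partial_y^2 u\|_{\rho,r}$ (from $\mathcal Z_\rho$). Technically this is the step I expect to spend the most care on.

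\medskip

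\noindent Finally the magnetic source $f\partial_x f+g\partial_y f$ is rewritten, using $\partial_x f+\partial_y g=0$, as $\partial_x(f^2)+\partial_y(fg)$, so that the pairing against $u_m$ transfers one derivative onto $u_m$ (no boundary terms since $u|_{y=0,1}=0$), after which the discrete Young inequality and the definition of $\mathcal Y_\rho$, $\mathcal Z_\rho$ in \eqref{def:energy} give bounds of the form $(C_*M)^{1/2}\mathcal Y_\rho^{1/2}\mathcal Z_\rho^{1/2}$ or $C_*M\,\mathcal Y_\rho$. Integrating in $t$ over $[0,T]$ and bounding the initial datum $\|u_0\|_{\rho_0,r+\frac12}^2$ by $M$ via \eqref{note:norm} yields the stated inequality, provided $\beta$ is large enough to absorb the terms of the form $\|u\|_{\rho,r+\frac12}^2$ that arise in the commutator estimates into $\beta\int\|u\|_{\rho,r+1}^2$.
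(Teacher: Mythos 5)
Your overall energy framework (applying $\partial_x^m$, testing against $L_{\rho,m,r+\frac12}^2\partial_x^m u$, using \eqref{factone} for the $\beta$-gain, and handling the commutator by the two-sided frequency split plus discrete Young) matches the paper's structure. But there are two related misunderstandings that would derail the proof.

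First, you treat the pressure as "the main obstacle" and propose to reconstruct $\partial_x p$ by integrating the $u$-equation in $y$ over $(0,1)$. This is unnecessary: because $\partial_y p=0$ and $\partial_x u+\partial_y v=0$ with $v|_{y=0,1}=0$, the pairing vanishes identically,
\begin{equation*}
\inner{\partial_x^{m+1}p,\ \partial_x^m u}_{L^2}
=\inner{\partial_x^m p,\ \partial_x^m\partial_y v}_{L^2}
=-\inner{\partial_x^m\partial_y p,\ \partial_x^m v}_{L^2}
=0,
\end{equation*}
so the pressure contributes nothing at all at this level. (The explicit Bernoulli-type formula for $\partial_x p|_{y=0,1}$ is only needed later, for the boundary term $T_7$ in the vorticity estimate.)

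Second, and consequently, you misattribute the origin of $\int_0^T\norm{\phi}_{\rho,r+\sigma}^2\,dt$ to the pressure. It actually comes from the top-order term in the Leibniz expansion of $v\partial_y u$: the $k=m$ term
\begin{equation*}
(\partial_x^m v)\,\partial_y u=-(\partial_x^{m+1}\phi)\,\partial_y u
\end{equation*}
loses a full tangential derivative on $\phi$ and \emph{cannot} be absorbed by $\mathcal{Y}_\rho$ or $\mathcal{Z}_\rho$; it must be bounded by $\norm{\partial_y u}_{L^\infty}\norm{\phi}_{\rho,r+\sigma}\mathcal{Y}_\rho^{1/2}$. Your statement that "after the standard $\partial_y$-integration-by-parts the commutator is of the same type as the $u\partial_x u$ commutator" misses exactly this distinguished term; the paper singles it out as $I_3$ while the $k=1,\dots,m-1$ part forms $I_2$. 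Unless you separate off this $k=m$ piece, the commutator estimate does not close, since it produces a quantity $\norm{\phi}_{\rho,r+\sigma}^2$ that is not part of the energy functionals and has to be carried as a separate term on the right-hand side (and is then handled by the boundary-decomposition machinery in Proposition \ref{prop:phi}).

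A minor further point: there is no need to rewrite $f\partial_x f+g\partial_y f$ in divergence form and integrate by parts — the paper estimates it directly via the frequency split and Young's inequality, since the pairing against $\partial_x^m u$ only costs one tangential derivative, which is affordable within $\mathcal{Y}_\rho$ (this is the term $I_1$). Your divergence-form rewrite is not wrong, just an unnecessary detour.
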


\begin{proof}
For $m\in \mathbb{Z}_+$, applying $\partial_x^m$ to the velocity equation in \eqref{hyMHD} and using $v=-\partial_x\phi$, we get
\begin{multline}\label{eq:u}
\partial_t\partial_x^mu+u\partial_x^{m+1}u+v\partial_x^m\partial_yu-\partial_x^m\partial_y^2u+\partial_x^{m+1}p
=\partial_x^m\inner{f\partial_xf+g\partial_yf}\\-\sum^m_{k=1}\binom{m}{k}(\partial_x^ku)\partial_x^{m-k+1}u-\sum^{m-1}_{k=1}\binom{m}{k}(\partial_x^kv)\partial_x^{m-k}\partial_yu+(\partial_x^{m+1}\phi)\partial_yu.    
\end{multline}
Observing that
\begin{equation*}
     \inner{\partial_x^{m+1}p,\ \partial_x^mu}_{L^2}=\inner{\partial_x^mp,\ \partial_x^m\partial_yv}_{L^2}=-\inner{\partial_x^m\partial_yp,\ \partial_x^mv}_{L^2}=0
 \end{equation*}
 implied by $\partial_xu+\partial_yv=0$ and $\partial_yp=0$, we take the $L^2$-product with $\partial_x^mu$ on both sides of \eqref{eq:u}, multiply by $L_{\rho,m,r+\frac{1}{2}}^2$, use the fact \eqref{factone} and then take summation over $m$ to get
\begin{equation}\label{est:uu}
    \begin{aligned}
 \frac{1}{2}\frac{d}{dt}\norm{u}_{\rho,r+\frac{1}{2}}^2+\beta\norm{u}_{\rho,r+1}^2+\norm{\partial_yu}_{\rho,r+\frac{1}{2}}^2\leq I_1+I_2+I_3,
    \end{aligned}
\end{equation}
where
	\begin{equation*}
		\left\{
		\begin{aligned}
I_1=&\sum^{+\infty}_{m=0}\sum^m_{k=0}L_{\rho,m,r+\frac{1}{2}}^2\binom{m}{k}\left|\inner{(\partial_x^kf)\partial_x^{m-k+1}f+(\partial_x^kg)\partial_x^{m-k}\partial_yf,\ \partial_x^mu}_{L^2}\right|,\\
I_2=&\sum^{+\infty}_{m=0}\sum^m_{k=1}L_{\rho,m,r+\frac{1}{2}}^2\binom{m}{k}\left|\inner{(\partial_x^ku)\partial_x^{m-k+1}u,\ \partial_x^mu}_{L^2}\right|\\
&+\sum^{+\infty}_{m=0}\sum^{m-1}_{k=1}L_{\rho,m,r+\frac{1}{2}}^2\binom{m}{k}\left|\inner{(\partial_x^kv)\partial_x^{m-k}\partial_yu,\ \partial_x^mu}_{L^2}\right|,\\
I_3=&\sum^{+\infty}_{m=0}L_{\rho,m,r+\frac{1}{2}}^2\left|\inner{(\partial_x^{m+1}\phi)\partial_yu,\ \partial_x^mu}_{L^2}\right|.
		\end{aligned}
		\right.
	\end{equation*}
We now deal with $I_1-I_3$ one by one.

\underline{\it The $I_1$ bound}. We first write
\begin{equation}\label{I11}
\begin{aligned}
&\sum^{+\infty}_{m=0}\sum^m_{k=0}L_{\rho,m,r+\frac{1}{2}}^2\binom{m}{k}\left|\inner{(\partial_x^kf)\partial_x^{m-k+1}f,\ \partial_x^mu}_{L^2}\right|\\
    &\leq \sum^{+\infty}_{m=0}\sum^{[\frac{m}{2}]}_{k=0}L_{\rho,m,r+\frac{1}{2}}^2\binom{m}{k}\norm{\partial_x^kf}_{L^{\infty}}\norm{\partial_x^{m-k+1}f}_{L^2}\norm{\partial_x^mu}_{L^2}\\&\quad+\sum^{+\infty}_{m=0}\sum^{m}_{k=[\frac{m}{2}]+1}L_{\rho,m,r+\frac{1}{2}}^2\binom{m}{k}\norm{\partial_x^kf}_{L^2}\norm{\partial_x^{m-k+1}f}_{L^{\infty}}\norm{\partial_x^mu}_{L^2}.
\end{aligned}
\end{equation}
Here and below, $[p]$  represents the largest integer less than or equal to $p$. On the other hand, a direct computation with \eqref{facttwo} gives for $r\ge 10$ and $1\leq\sigma\leq\frac{7}{6}$,
\begin{equation}\label{ineq1}
    \left\{
    \begin{aligned}
&\binom{m}{k}\frac{L_{\rho,m,r+\frac{1}{2}}^2}{L_{\rho,k+1,r-\sigma+\frac{3}{2}}L_{\rho,m-k+1,r-\sigma+3}L_{\rho,m,r+1}}\leq \frac{C}{k+1},\quad \mathrm{if}\ 0\leq k\leq \big[\frac{m}{2}\big],\\
&\binom{m}{k}\frac{L_{\rho,m,r+\frac{1}{2}}^2}{L_{\rho,k,r-\sigma+3}L_{\rho,m-k+2,r-\sigma+\frac{3}{2}}L_{\rho,m,r+1}}\leq \frac{C}{m-k+1},\quad \mathrm{if}\  \big[\frac{m}{2}\big]+1\leq k\leq m.
    \end{aligned}
    \right.
\end{equation}
Then Young's inequality \eqref{young} with Cauchy inequality and \eqref{I11}-\eqref{ineq1} gives
\begin{equation}\label{est:I11}
    \begin{aligned}
&\sum^{+\infty}_{m=0}\sum^m_{k=0}L_{\rho,m,r+\frac{1}{2}}^2\binom{m}{k}\left|\inner{(\partial_x^kf)\partial_x^{m-k+1}f,\ \partial_x^mu}_{L^2}\right|\\
&\leq C\sum_{m=0}^{+\infty}\sum_{k=0}^{[\frac{m}{2}]}\frac{L_{\rho,k+1,r-\sigma+\frac{3}{2}}\norm{\partial_x^kf}_{L^\infty}}{k+1}\big(L_{\rho,m-k+1,r-\sigma+3}\norm{\partial_x^{m-k+1}f}_{L^2}\big)\\
&\qquad\times\big(L_{\rho,m,r+1}\norm{\partial_x^mu}_{L^2}\big)\\
&\quad +C\sum_{m=0}^{+\infty}\sum_{k=[\frac{m}{2}]+1}^{m}\frac{L_{\rho,m-k+2,r-\sigma+\frac{3}{2}}\norm{\partial_x^{m-k+1}f}_{L^\infty}}{m-k+1}\big(L_{\rho,k,r-\sigma+3}\norm{\partial_x^{k}f}_{L^2}\big)\\
&\qquad\times\big(L_{\rho,m,r+1}\norm{\partial_x^mu}_{L^2}\big)\\
&\leq C\sum_{m=0}^{+\infty}\frac{L_{\rho,m+1,r-\sigma+\frac{3}{2}}\norm{\partial_x^mf}_{L^\infty}}{m+1}\mathcal{Y}_\rho\leq C\mathcal{X}_\rho^\frac{1}{2}\mathcal{Y}_\rho,
    \end{aligned}
\end{equation}
the last line using the definitions of $\mathcal{X}_\rho$ and $\mathcal{Y}_\rho$ and the estimate that
\begin{equation}\label{ff}
\sum_{m=0}^{+\infty}\frac{L_{\rho,m+1,r-\sigma+\frac{3}{2}}\norm{\partial_x^mf}_{L^\infty}}{m+1}\leq C \Big(\sum_{m=0}^{+\infty}L_{\rho,m+1,r-\sigma+\frac{3}{2}}^2\norm{\partial_x^mf}_{L^\infty}^2\Big)^\frac{1}{2}\leq C\mathcal{X}_\rho^\frac{1}{2}
\end{equation}
implied by $L_{\rho,m+1,r}\leq CL_{\rho,m,r}$ for $m\ge 0$. Similarly,
\begin{equation*}
    \sum^{+\infty}_{m=0}\sum^m_{k=0}L_{\rho,m,r+\frac{1}{2}}^2\binom{m}{k}\left|\inner{(\partial_x^kg)\partial_x^{m-k}\partial_yf,\ \partial_x^mu}_{L^2}\right|\leq C\mathcal{X}_\rho^\frac{1}{2}\mathcal{Y}_\rho.
\end{equation*}
Combining this and estimate \eqref{est:I11} yields
\begin{equation}\label{est:I1}
  I_1\leq  C\mathcal{X}_\rho^\frac{1}{2}\mathcal{Y}_\rho. 
\end{equation}

\underline{\it The $I_2$ bound}. Following a similar decomposition in \eqref{I11} and using the definitions of $\mathcal{X}_\rho$ and $\mathcal{Y}_\rho$ as well as Young's inequality \eqref{young}, one can verify that
\begin{equation}\label{I21}
    \sum^{+\infty}_{m=0}\sum^m_{k=1}L_{\rho,m,r+\frac{1}{2}}^2\binom{m}{k}\left|\inner{(\partial_x^ku)\partial_x^{m-k+1}u,\ \partial_x^mu}_{L^2}\right|\leq C\mathcal{X}_\rho^\frac{1}{2}\mathcal{Y}_\rho. 
\end{equation}
For the remainder term in $I_2$, we split it as
\begin{equation*}
    \begin{aligned}
&\sum^{+\infty}_{m=0}\sum^{m-1}_{k=1}L_{\rho,m,r+\frac{1}{2}}^2\binom{m}{k}\left|\inner{(\partial_x^kv)\partial_x^{m-k}\partial_yu,\ \partial_x^mu}_{L^2}\right|\\
&\leq \sum_{m=0}^{+\infty}\sum_{k=1}^{[\frac{m-1}{2}]}L_{\rho,m,r+\frac{1}{2}}^2\binom{m}{k}\norm{\partial_x^kv}_{L^\infty}\norm{\partial_x^{m-k}\partial_yu}_{L^2}\norm{\partial_x^mu}_{L^2}\\
&\quad+\sum_{m=0}^{+\infty}\sum_{k=[\frac{m-1}{2}]+1}^{m-1}L_{\rho,m,r+\frac{1}{2}}^2\binom{m}{k}\norm{\partial_x^kv}_{L_x^2L_y^\infty}\norm{\partial_x^{m-k}\partial_yu}_{L_x^\infty L_y^2}\norm{\partial_x^mu}_{L^2}.\\
    \end{aligned}
\end{equation*}
Then Young's inequality \eqref{young}, with the fact that
\begin{equation*}
    \left\{
    \begin{aligned}
&\binom{m}{k}\frac{L_{\rho,m,r+\frac{1}{2}}^2}{L_{\rho,k+2,r+\frac{1}{2}}L_{\rho,m-k,r+\frac{1}{2}}L_{\rho,m,r+1}}\leq \frac{C}{k+1},\quad \mathrm{if}\ 1\leq k\leq \big[\frac{m-1}{2}\big],\\
&\binom{m}{k}\frac{L_{\rho,m,r+\frac{1}{2}}^2}{L_{\rho,k+1,r+1}L_{\rho,m-k+1,r}L_{\rho,m,r+1}}\leq \frac{C}{m-k+1},\ \ \mathrm{if}\  \big[\frac{m-1}{2}\big]+1\leq k\leq m-1,
    \end{aligned}
    \right.
\end{equation*}
gives
\begin{equation*}
    \sum^{+\infty}_{m=0}\sum^{m-1}_{k=1}L_{\rho,m,r+\frac{1}{2}}^2\binom{m}{k}\left|\inner{(\partial_x^kv)\partial_x^{m-k}\partial_yu,\ \partial_x^mu}_{L^2}\right|\leq C\mathcal{X}_\rho^\frac{1}{2}\mathcal{Y}_\rho^\frac{1}{2}\mathcal{Z}_\rho^\frac{1}{2}+ C\mathcal{X}_\rho^\frac{1}{2}\mathcal{Y}_\rho. 
\end{equation*}
This with \eqref{I21} yields
\begin{equation}\label{est:I2}
    I_2\leq C\mathcal{X}_\rho^\frac{1}{2}\mathcal{Y}_\rho^\frac{1}{2}\mathcal{Z}_\rho^\frac{1}{2}+C\mathcal{X}_\rho^\frac{1}{2}\mathcal{Y}_\rho. 
\end{equation}

\underline{\it The $I_3$ bound}. A direct computation with the definitions of $\mathcal{X}_\rho$ and $\mathcal{Y}_\rho$ gives
\begin{equation}\label{est:I3}
\begin{aligned}
   I_3&\leq \norm{\partial_yu}_{L^\infty}\sum_{m=0}^{+\infty}\frac{L_{\rho,m,r}}{L_{\rho,m+1,r+\sigma}}\big(L_{\rho,m+1,r+\sigma}\norm{\partial_x^{m+1}\phi}_{L^2}\big)\big(L_{\rho,m,r+1}\norm{\partial_x^mu}_{L^2}\big)\\
   &\leq C\big(\mathcal{X}_\rho^\frac{1}{2}+\norm{\partial_y^2u}_{\rho,r-\sigma}\big)\mathcal{Y}_\rho^\frac{1}{2}\norm{\phi}_{\rho,r+\sigma}\leq C\big(\mathcal{X}_\rho+\norm{\partial_y^2u}_{\rho,r-\sigma}^2\big)\mathcal{Y}_\rho+C\norm{\phi}_{\rho,r+\sigma}^2.
   \end{aligned}
\end{equation}
Finally, substituting \eqref{est:I1}, \eqref{est:I2} and \eqref{est:I3} into \eqref{est:uu} and integrating from $0$ to $T$ with respect to time, we obtain that, recalling $M$ is defined in \eqref{def:M},
\begin{align*}
   &\sup_{t\in[0,T]}\norm{u}_{\rho,r+\frac{1}{2}}^2+\beta\int^T_0\norm{u}_{\rho,r+1}^2dt+\int^T_0\norm{\partial_yu}_{\rho,r+\frac{1}{2}}^2dt\\
   &\leq C\sup_{t\in[0,T]}\big(\mathcal{X}_\rho^\frac{1}{2}+\mathcal{X}_\rho+\norm{\partial_y^2u}_{\rho,r-\sigma}^2\big)\int^T_0\mathcal{Y}_{\rho}dt+C\sup_{t\in[0,T]}\mathcal{X}_\rho^\frac{1}{2}\int^T_0\mathcal{Y}_\rho^\frac{1}{2}\mathcal{Z}_\rho^\frac{1}{2}dt\\
   &\quad +C\int^T_0\norm{\phi}^2_{\rho,r+\sigma}dt+C\norm{u_0}_{\rho_0,r+\frac{1}{2}}^2\\
&\leq CC_*M\int^T_0\mathcal{Y}_{\rho}dt+C\sqrt{C_*M}\int^T_0\mathcal{Y}_\rho^\frac{1}{2}\mathcal{Z}_\rho^\frac{1}{2}dt+C\int^T_0\norm{\phi}^2_{\rho,r+\sigma}dt+CM,
\end{align*}
the last line using assumption \eqref{ass:pri} along with Lemma \ref{lem:principle} and $C_*M\ge 1$. The proof of Lemma \ref{lem:u} is thus completed.
\end{proof}

\begin{lemma}\label{lem:yu}
Under the same assumption as given in Theorem \ref{thm:pri}, it holds that
\begin{equation}\label{est:yu}
    \begin{aligned}
 &\sup_{t\in [0,T]}\norm{\partial_yu}_{\rho,r}^2+\beta\int^T_0\norm{\partial_yu}_{\rho,r+\frac{1}{2}}^2dt+\int^T_0\norm{\partial_y^2u}_{\rho,r}^2dt\\
 &\leq C\delta^{-2}\int^T_0\norm{\phi}_{\rho,r+\sigma}^2dt+C\delta^{-3}C_*^\frac{3}{2}M^\frac{3}{2}\int^T_0\mathcal{Y}_\rho dt+C\delta^{-3}C_*^\frac{3}{2}M^\frac{3}{2}\int^T_0\mathcal{Y}_\rho^\frac{1}{2}\mathcal{Z}_\rho^\frac{1}{2} dt\\
 &\quad+C\delta^{-3}C_*^\frac{3}{2}M^\frac{3}{2}\int^T_0\mathcal{Y}_\rho^\frac{1}{4}\mathcal{Z}_\rho^\frac{3}{4}dt+C\delta^{-2}M,\\
    \end{aligned}
\end{equation}
 provided $\beta$ is sufficiently large. Recall $\mathcal{Y}_\rho$ and $\mathcal{Z}_\rho$ are defined in \eqref{def:energytwo} and $\phi$ is given in \eqref{def:phi}.
\end{lemma}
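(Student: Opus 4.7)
The strategy mirrors the proof of Lemma \ref{lem:u} but applied to $\partial_y u$. Differentiating the tangential momentum equation in \eqref{hyMHD} with respect to $y$, using $\partial_y v=-\partial_x u$ and $\partial_y p=0$, and noting that the two cross terms $(\partial_y u)(\partial_x u)$ and $(\partial_y v)(\partial_y u)$ cancel, one obtains
\[
\partial_t\partial_y u + u\,\partial_x\partial_y u + v\,\partial_y^2 u - \partial_y^3 u = \partial_y(f\partial_x f) + \partial_y(g\partial_y f).
\]
Applying $\partial_x^m$, testing against $\partial_x^m\partial_y u$ in $L^2(\Omega)$, weighting by $L_{\rho,m,r}^2$, summing in $m$ and using \eqref{factone} yields, schematically,
\[
\tfrac{1}{2}\tfrac{d}{dt}\|\partial_y u\|_{\rho,r}^2 + \beta\|\partial_y u\|_{\rho,r+\frac{1}{2}}^2 + \|\partial_y^2 u\|_{\rho,r}^2 \leq \mathcal{B} + \sum_{j=1}^{4} J_j,
\]
where $\mathcal{B}$ is the $y$-boundary contribution from integrating $-\partial_y^3 u$ by parts (since $\partial_y u$ does not vanish at $y=0,1$), and the $J_j$ collect the convective commutators, the magnetic source, and the top-order loss-of-derivative term.

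The principal obstacle is the top-order convection term at $k=m$ in $[\partial_x^m, v\partial_y](\partial_y u) = \sum_{k\geq 1}\binom{m}{k}(\partial_x^k v)\partial_x^{m-k}\partial_y^2 u$, namely $(\partial_x^m v)\partial_y^2 u=-(\partial_x^{m+1}\phi)\partial_y^2 u$, which loses one tangential derivative beyond what the dissipation $\|\partial_y^2 u\|_{\rho,r}$ provides. The plan is to absorb this loss by a G\'erard-Varet--Masmoudi--Vicol-type cancellation: subtract from the $\partial_y u$-equation a $(\partial_y^2 u)$-weighted multiple of the velocity equation \eqref{eq:u}, so that the dangerous term is eliminated at the cost of lower-order commutators carrying factors of $(\partial_y^2 u)^{-1}$. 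The convexity \eqref{ass:convex} together with Proposition \ref{prop:principle} guarantees $\partial_y^2 u\geq\delta$, so inverting this weight produces exactly the $\delta^{-2}$ and $\delta^{-3}$ prefactors seen in \eqref{est:yu}. The boundary contribution $\mathcal{B}$ is controlled by the explicit expression for $\partial_y^2 u|_{y=0,1}$ used in the proof of Proposition \ref{prop:principle} together with \eqref{est:principle}.

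The remaining commutators are processed as in Lemma \ref{lem:u}: split each $k$-sum at $[m/2]$, place the low-derivative factor in $L^\infty$ and the high-derivative factor in $L^2$, apply the combinatorial bounds analogous to \eqref{ineq1} (which still close for $r\geq 10$ and $1\leq\sigma\leq\frac{7}{6}$), and close by Young's discrete convolution \eqref{young}. The three interpolation patterns $\int_0^T \mathcal{Y}_\rho\,dt$, $\int_0^T \mathcal{Y}_\rho^{1/2}\mathcal{Z}_\rho^{1/2}\,dt$ and $\int_0^T \mathcal{Y}_\rho^{1/4}\mathcal{Z}_\rho^{3/4}\,dt$ in \eqref{est:yu} correspond respectively to commutators where none, exactly one, or a three-factor interpolation reaches $\mathcal{Z}_\rho$-level regularity; the last pattern is produced by the chain $(\partial_x^k v)\partial_x^{m-k}\partial_y^2 u$ for $k$ in an intermediate range, where an $H_y^1$-control on $\partial_y^2 u$ must be interpolated against two tangential norms. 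The magnetic source $\partial_y(f\partial_x f + g\partial_y f)$ is treated exactly as $I_1$ of Lemma \ref{lem:u}, since $\mathcal{Y}_\rho$ controls $\partial_y f$, $\partial_y^2 f$ and $\partial_y g=-\partial_x f$. The residual $\phi$-contribution yields $C\delta^{-2}\int_0^T\|\phi\|_{\rho,r+\sigma}^2 dt$ via Cauchy--Schwarz combined with \eqref{est:dy2u} and the convexity-weighted cancellation. Integrating in time over $[0,T=\beta^{-1}]$, bounding the initial term by $M$ through \eqref{def:M}, and using $C_*M\geq 1$ to absorb sub-leading powers then produces \eqref{est:yu}.
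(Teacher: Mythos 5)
Your proposal correctly identifies the equation for $\partial_y u$, the splitting of the commutator sums, the role of the boundary data for $\partial_y^2 u|_{y=0,1}$, and the appearance of $\delta^{-2},\delta^{-3}$ factors from the convexity; but the core cancellation mechanism is misdescribed, and as stated it would not close. The paper does \emph{not} test the $\partial_x^m\partial_y$-differentiated velocity equation against $\partial_x^m\partial_y u$ and then subtract a ``$(\partial_y^2 u)$-weighted multiple of the velocity equation''; to cancel $(\partial_x^m v)\partial_y^2 u$ by subtracting the $u$-equation you would need a multiplier behaving like $\partial_y^2 u/\partial_y u$, which is ill-defined where $\partial_y u$ vanishes (convexity, unlike Oleinik monotonicity, gives no lower bound on $\partial_y u$). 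The paper instead tests \eqref{4eq10} directly against the \emph{weighted} function $\frac{\partial_x^m\partial_y u}{\partial_y^2 u}$. Under this pairing the dangerous term drops out \emph{exactly} by algebra and integration by parts:
\[
\inner{(\partial_x^mv)\partial_y^2u,\ \tfrac{\partial_x^m\partial_yu}{\partial_y^2u}}_{L^2}
=\inner{\partial_x^mv,\ \partial_x^m\partial_yu}_{L^2}
=\inner{\partial_x^{m+1}u,\ \partial_x^mu}_{L^2}=0,
\]
using $v|_{y=0,1}=0$, $\partial_y v=-\partial_x u$, and $x$-periodicity. This is precisely the G\'erard-Varet--Masmoudi--Vicol hydrostatic trick, but it is a weighted-energy device, not a good-unknown subtraction, and your description conflates the two.

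A consequence you do not account for is that this weight generates three additional groups of terms (the paper's $T_{4},T_{5},T_{6}$) coming from $\partial_t(\partial_y^2 u)^{-1}$, $(u\partial_x+v\partial_y)(\partial_y^2 u)^{-1}$, and $\partial_y^3 u$, as well as the boundary term $T_7=\int_{\mathbb{T}}\frac{(\partial_x^m\partial_y^2u)\partial_x^m\partial_yu}{\partial_y^2u}\big|_{y=0}^{y=1}\,dx$. These are non-negligible: the $\mathcal{Y}_\rho^{1/4}\mathcal{Z}_\rho^{3/4}$ interpolation pattern in \eqref{est:yu} is produced precisely by $T_4$, $T_6$, and the boundary contribution $T_{7,2}$ through the Agmon-type bound $\norm{\partial_x^m\partial_y u}_{L_x^2L_y^\infty}\lesssim\norm{\partial_x^m\partial_y u}_{L^2}^{1/2}\norm{\partial_x^m\partial_y^2 u}_{L^2}^{1/2}$, \emph{not} by the intermediate-$k$ range of the convection commutator $(\partial_x^k v)\partial_x^{m-k}\partial_y^2 u$ (which in the paper yields only the $\mathcal{Y}_\rho^{1/2}\mathcal{Z}_\rho^{1/2}$ pattern; see \eqref{est:T23}). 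The need to control $T_4$ is also the structural reason why $\mathcal{Z}_\rho$ contains $\partial_t\partial_y^2 u$-type norms and why $\partial_t^k$-derivatives are tracked at all. Until the weighted-energy setup and the attendant terms $T_4$--$T_7$ are in place, the proof does not close.
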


\begin{proof}
    For given $m\in\mathbb{Z}_+$, applying $\partial_x^m\partial_y$ to the velocity equation in \eqref{hyMHD} yields
\begin{multline}\label{4eq10}
 (\partial_t+u\partial^{m+1}_x+v\partial_x^m\partial_y-\partial_x^m\partial_y^2)\partial_yu=\partial_x^m\inner{f\partial_x\partial_yf+g\partial_y^2f}\\-\sum^m_{k=1}\binom{m}{k}(\partial_x^ku)\partial_x^{m-k+1}\partial_yu-\sum^{m-1}_{k=1}\binom{m}{k}(\partial_x^kv)\partial_x^{m-k}\partial_y^2u-(\partial_x^mv)\partial_y^2u.   
\end{multline} 
Noticing that
\begin{align*}
    \inner{(\partial_x^mv)\partial_y^2u,\ \frac{\partial_x^m\partial_yu}{\partial_y^2u}}_{L^2}=\inner{\partial_x^mv,\ \partial_x^m\partial_yu}_{L^2}=\inner{\partial_x^{m+1}u,\ \partial_x^mu}_{L^2}=0,
\end{align*}
we take the $L^2$-product with $\frac{\partial_x^m\partial_yu}{\partial_y^2u}$ on both sides of  \eqref{4eq10} to get
\begin{align*}
&\frac{1}{2}\frac{d}{dt}\left\|\frac{\partial_x^m\partial_yu}{\sqrt{\partial_y^2u}}\right\|_{L^2}^2+\left\|\frac{\partial_x^m\partial_y^2u}{\sqrt{\partial_y^2u}}\right\|_{L^2}^2\\
&= \Big(\partial_x^m\inner{f\partial_x\partial_yf+g\partial_y^2f},\ \frac{\partial_x^m\partial_yu}{\partial_y^2u}\Big)_{L^2}-\sum^m_{k=1}\binom{m}{k}\Big((\partial_x^ku)\partial_x^{m-k+1}\partial_yu,\ \frac{\partial_x^m\partial_yu}{\partial_y^2u}\Big)_{L^2}\\
&\quad-\sum^{m-1}_{k=1}\binom{m}{k}\inner{(\partial_x^kv)\partial_x^{m-k}\partial_y^2u,\ \frac{\partial_x^m\partial_yu}{\partial_y^2u}}_{L^2}-\frac{1}{2}\int_{\Omega}\frac{(\partial_x^m\partial_yu)^2\partial_t\partial_y^2u}{(\partial_y^2u)^2}dxdy\\
&\quad-\frac{1}{2}\int_{\Omega}\frac{(\partial_x^m\partial_yu)^2(u\partial_x+v\partial_y)\partial_y^2u}{(\partial_y^2u)^2}dxdy+\int_{\Omega}\frac{(\partial_x^m\partial_y^2u)(\partial_x^m\partial_yu)\partial_y^3u}{(\partial_y^2u)^2}dxdy
\\
&\quad+\int_{\mathbb{T}}\frac{(\partial_x^m\partial_y^2u)\partial_x^m\partial_yu}{\partial_y^2u}\bigg|^{y=1}_{y=0}dx\\
&\stackrel{\rm def}{=}T_{1m}+T_{2m}+T_{3m}+T_{4m}+T_{5m}+T_{6m}+T_{7m}.
\end{align*}
Then we multiply the above equality by $L_{\rho,m,r}^2$, use Proposition \ref{prop:principle} and \eqref{factone}, take summation over $m$ and then integrate on $[0,T]$ with respect to $t$ to get
\begin{multline}\label{4eq12}
\sup_{t\in [0,T]}\norm{\partial_yu}_{\rho,r}^2+\beta\int^T_0\norm{\partial_yu}_{\rho,r+\frac{1}{2}}^2dt+\int^T_0\norm{\partial_y^2u}_{\rho,r}^2dt\\
\leq C\delta^{-1}\sum_{i=1}^7\int^T_0 T_idt+C\delta^{-1}\sum_{m=0}^{+\infty}L_{\rho_0,m,r}^2\Big\|\frac{\partial_x^m\partial_yu_0}{\sqrt{\partial_y^2u_0}}\Big\|_{L^2}^2\\
\leq C\delta^{-1}\sum_{i=1}^7\int^T_0 T_idt+C\delta^{-2}M.
\end{multline}
Here $T_i$ is defined by
\begin{equation}\label{def:Ti}
    T_i\stackrel{\rm def}{=}\sum^{+\infty}_{m=0}L_{\rho,m,r}^2|T_{im}|.
\end{equation}
The rest of the proof is dedicated to estimating $T_i$ for $1\leq i\leq 7$.

\underline{\it The $T_{1}$, $T_2$ and $T_3$ bounds}. By Proposition \ref{prop:principle}, we write
\begin{equation}\label{4eq13}
\begin{aligned}
&\sum^{+\infty}_{m=0}L_{\rho,m,r}^2\left|\inner{\partial_x^m\inner{f\partial_x\partial_yf},\ \frac{\partial_x^m\partial_yu}{\partial_y^2u}}_{L^2}\right|\\
&\leq C\delta^{-1}\sum^{+\infty}_{m=0}\sum^{[\frac{m}{2}]}_{k=0}L_{\rho,m,r}^2\binom{m}{k}\norm{\partial_x^kf}_{L^{\infty}}\norm{\partial_x^{m-k+1}\partial_yf}_{L^2}\norm{\partial_x^m\partial_yu}_{L^2}\\
&\quad+C\delta^{-1}\sum^{+\infty}_{m=0}\sum^{m}_{k=[\frac{m}{2}]+1}L_{\rho,m,r}^2\binom{m}{k}\norm{\partial_x^kf}_{L^2}\norm{\partial_x^{m-k+1}\partial_yf}_{L^{\infty}}\norm{\partial_x^m\partial_yu}_{L^2}.
\end{aligned}
\end{equation}
On the other hand, we note that
\begin{equation}\label{ineq3}
    \left\{
    \begin{aligned}
&\binom{m}{k}\frac{L_{\rho,m,r}^2}{L_{\rho,k+1,r-\sigma+\frac{3}{2}}L_{\rho,m-k+1,r-\sigma+\frac{5}{2}}L_{\rho,m,r+\frac{1}{2}}}\leq \frac{C}{k+1},\quad \mathrm{if}\ 0\leq k\leq \big[\frac{m}{2}\big],\\
&\binom{m}{k}\frac{L_{\rho,m,r}^2}{L_{\rho,k,r-\sigma+3}L_{\rho,m-k+2,r-\sigma+1}L_{\rho,m,r+\frac{1}{2}}}\leq \frac{C}{m-k+1},\quad \mathrm{if}\  \big[\frac{m}{2}\big]+1\leq k\leq m.
    \end{aligned}
    \right.
\end{equation}
Then Young's inequality \eqref{young} with \eqref{ff} and \eqref{4eq13}-\eqref{ineq3} gives
\begin{equation*}
\sum^{+\infty}_{m=0}L_{\rho,m,r}^2\left|\inner{\partial_x^m\inner{f\partial_x\partial_yf},\ \frac{\partial_x^m\partial_yu}{\partial_y^2u}}_{L^2}\right|\leq C\delta^{-1}\mathcal{X}_\rho^\frac{1}{2}\mathcal{Y}_\rho.   
\end{equation*}
Similarly, we have
\begin{equation*}
\sum^{+\infty}_{m=0}L_{\rho,m,r}^2\left|\inner{\partial_x^m\inner{g\partial_y^2f},\ \frac{\partial_x^m\partial_yu}{\partial_y^2u}}_{L^2}\right|\leq C\delta^{-1}\mathcal{X}_\rho^\frac{1}{2}\mathcal{Y}_\rho.     
\end{equation*}
Recall $T_1$ is given in \eqref{def:Ti}. Combining the two estimates above yields
\begin{equation*}
    T_1\leq C\delta^{-1}\mathcal{X}_\rho^\frac{1}{2}\mathcal{Y}_\rho.     
\end{equation*}
Moreover, repeating the argument used in estimate \eqref{est:I2} with slight modifications, we use Lemma \ref{lem:principle} to conclude that for any $t\in [0,T]$,
\begin{equation}\label{est:T23}
T_2+T_3\leq C\delta^{-1}\big(\mathcal{X}_\rho^\frac{1}{2}+\sqrt{C_*M}\big)\mathcal{Y}_\rho+ C\delta^{-1}\mathcal{X}_\rho^\frac{1}{2}\mathcal{Y}_\rho^\frac{1}{2}\mathcal{Z}_\rho^\frac{1}{2}.  
\end{equation}

\underline{The $T_{4}$, $T_{5}$ and $T_{6}$ bounds}. Recalling $T_4$ is defined in \eqref{def:Ti}, we use Sobolev embedding inequality and the definitions of $\mathcal{X}_\rho$, $\mathcal{Y}_\rho$ and $\mathcal{Z}_\rho$ to conclude that
\begin{equation}\label{est:T4}
    \begin{aligned}
     & T_4\leq C\delta^{-2}\sum^{+\infty}_{m=0}L_{\rho,m,r}^2\norm{\partial_x^m\partial_yu}_{L^2}\norm{\partial_x^m\partial_yu}_{L_x^2L_y^{\infty}}\norm{\partial_t\partial_y^2u}_{L_x^{\infty}L_y^2}\\
      &\leq C\delta^{-2}\norm{\partial_t\partial_y^2u}_{\rho,r-\sigma}\sum^{+\infty}_{m=0}L_{\rho,m,r}^2\norm{\partial_x^m\partial_yu}_{L^2}\big(\norm{\partial_x^m\partial_yu}_{L^2}+\norm{\partial_x^m\partial_yu}_{L^2}^{\frac{1}{2}}\norm{\partial_x^m\partial_y^2u}_{L^2}^{\frac{1}{2}}\big)\\
      &\leq C\delta^{-2}\mathcal{X}_\rho\mathcal{Z}_\rho^\frac{1}{2}+C\delta^{-2}\mathcal{X}_\rho^\frac{3}{4}\mathcal{Z}_\rho^\frac{3}{4}\leq C\delta^{-2}\mathcal{X}_\rho^\frac{1}{2}\mathcal{Y}_\rho^\frac{1}{2}\mathcal{Z}_\rho^\frac{1}{2}+C\delta^{-2}\mathcal{X}_\rho^\frac{1}{2}\mathcal{Y}_\rho^\frac{1}{4}\mathcal{Z}_\rho^\frac{3}{4},
    \end{aligned}
\end{equation}
the last inequality using \eqref{xy}. 
On the other hand, for terms $T_5$ and $T_6$, by Lemma \ref{lem:principle} and estimates \eqref{xy} and \eqref{est:dy3u}, 
we have
\begin{equation}\label{est:T5}
    \begin{aligned}
      T_5&\leq C\delta^{-2}\sum^{+\infty}_{m=0}L_{\rho,m,r}^2\norm{\partial_x^m\partial_yu}_{L^2}\norm{\partial_x^m\partial_yu}_{L_x^2L_y^{\infty}}\norm{(u\partial_x+v\partial_y)\partial_y^2u}_{L_x^{\infty}L_y^2}\\
&\leq C\delta^{-2}C_*M\mathcal{X}_\rho^\frac{1}{2}\sum^{+\infty}_{m=0}L_{\rho,m,r}^2\norm{\partial_x^m\partial_yu}_{L^2}\inner{\norm{\partial_x^m\partial_yu}_{L^2}+\norm{\partial_x^m\partial_y^2u}_{L^2}}\\
&\leq C\delta^{-2}C_*M\mathcal{X}_\rho^\frac{3}{2}+C\delta^{-2}C_*M\mathcal{X}_\rho\mathcal{Z}_\rho^\frac{1}{2}\leq  C\delta^{-2}C_*M\mathcal{X}_\rho^\frac{1}{2}\big(\mathcal{Y}_\rho+\mathcal{Y}_\rho^\frac{1}{2}\mathcal{Z}_\rho^\frac{1}{2}\big),
    \end{aligned}
\end{equation}
where in the second inequality we have used that for any $t\in [0,T]$,
\begin{equation*}
\norm{(u\partial_x+v\partial_y)\partial_y^2u}_{L_x^{\infty}L_y^2}\leq  \norm{u}_{L^\infty}\norm{\partial_x\partial_y^2u}_{L_x^{\infty}L_y^2}+\norm{v}_{L^\infty}\norm{\partial_y^3u}_{L_x^{\infty}L_y^2}\leq CC_*M\mathcal{X}_\rho^\frac{1}{2},
\end{equation*}
and
\begin{equation}\label{est:T6}
    \begin{aligned}
      T_6&\leq C\delta^{-2}\sum^{+\infty}_{m=0}L_{\rho,m,r}^2\norm{\partial_x^m\partial_y^2u}_{L^2}\norm{\partial_x^m\partial_yu}_{L_x^2L_y^{\infty}}\norm{\partial_y^3u}_{L_x^{\infty}L_y^2}\\
&\leq C\delta^{-2}C_*M\sum^{+\infty}_{m=0}L_{\rho,m,r}^2\norm{\partial_x^m\partial_y^2u}_{L^2}\big(\norm{\partial_x^m\partial_yu}_{L^2}+\norm{\partial_x^m\partial_yu}_{L^2}^{\frac{1}{2}}\norm{\partial_x^m\partial_y^2u}_{L^2}^{\frac{1}{2}}\big)\\
      &\leq C\delta^{-2}C_*M\mathcal{X}_\rho^\frac{1}{2}\mathcal{Z}_\rho^\frac{1}{2}+C\delta^{-2}C_*M\mathcal{X}_\rho^\frac{1}{4}\mathcal{Z}_\rho^\frac{3}{4}\\
      &\leq C\delta^{-2}C_*M\mathcal{Y}_\rho^\frac{1}{2}\mathcal{Z}_\rho^\frac{1}{2}+C\delta^{-2}C_*M\mathcal{Y}_\rho^\frac{1}{4}\mathcal{Z}_\rho^\frac{3}{4}.
    \end{aligned}
\end{equation}

\underline{\it The $T_{7}$ bound}. Observing that
\begin{equation*}
\begin{aligned}
\partial_y^2u|_{y=0,1}=\partial_xp|_{y=0,1}-(f\partial_xf)|_{y=0,1},
\end{aligned}
\end{equation*}
we use Proposition \ref{prop:principle} to obtain
\begin{multline}\label{T7}
T_7 \leq  2\delta^{-1}\sum^{+\infty}_{m=0}L_{\rho,m,r}^2\norm{\partial_x^{m+1}p|_{y=0,1}}_{L_x^2}\norm{\partial_x^m\partial_yu}_{L_x^2L_y^{\infty}}\\
\quad+\sum^{+\infty}_{m=0}L_{\rho,m,r}^2\left|\int_{\mathbb{T}}\frac{(\partial_x^m\partial_yu)\partial_x^m(f\partial_xf)}{\partial_y^2u}\bigg|_{y=0}^{y=1}dx\right|.\\
\end{multline}
For the first term on the right-hand side of \eqref{T7}, we  use the fact that
\begin{equation*}
\partial_xp|_{y=0,1}=-2\int^1_0u\partial_xudy+2\int^1_0f\partial_xfdy+\int^1_0\partial_y^2udy-\int^1_0\int_{\mathbb{T}}\partial_y^2u dxdy  
\end{equation*}
to write
\begin{equation}\label{T712}
2\delta^{-1}\sum^{+\infty}_{m=0}L_{\rho,m,r}^2\norm{\partial_x^{m+1}p|_{y=0,1}}_{L_x^2}\norm{\partial_x^m\partial_yu}_{L_x^2L_y^{\infty}}\leq C\delta^{-1}(T_{7,1}+T_{7,2}), 
\end{equation}
where
\begin{equation*}
    \left\{
    \begin{aligned}
&T_{7,1}=   \sum^{+\infty}_{m=0}L_{\rho,m,r}^2\left\|\partial_x^m\int^1_0u\partial_xudy\right\|_{L^2}\norm{\partial_x^m\partial_yu}_{L_x^2L_y^{\infty}},\\
&T_{7,2}=\sum^{+\infty}_{m=0}L_{\rho,m,r}^2\big(\norm{\partial_x^m(f\partial_xf)}_{L^2}\norm{\partial_x^m\partial_yu}_{L_x^2L_y^{\infty}}+\norm{\partial_x^m\partial_y^2u}_{L^2}\norm{\partial_x^m\partial_yu}_{L_x^2L_y^{\infty}}\big).
    \end{aligned}
    \right.
\end{equation*}
To estimate $T_{7,1}$, we use the fact $\int^1_0u\partial_x^{m+1}udy=-\int^1_0(\partial_yu)\partial_x^{m+1}\phi dy$ implied by \eqref{def:phi} to write
\begin{equation}\label{T711}
\begin{aligned}
T_{7,1}  &\leq\sum^{+\infty}_{m=0}L_{\rho,m,r}^2\|\partial_yu\|_{L^{\infty}}\norm{\partial_x^{m+1}\phi}_{L^2}\norm{\partial_x^{m}\partial_yu}_{L_x^2L_y^{\infty}}\\
&\quad+ \sum^{+\infty}_{m=0}\sum^{[\frac{m}{2}]}_{k=1}L_{\rho,m,r}^2\binom{m}{k}\norm{\partial_x^ku}_{L^{\infty}}\norm{\partial_x^{m-k+1}u}_{L^2}\norm{\partial_x^m\partial_yu}_{L_x^2L_y^{\infty}} \\
&\quad+\sum^{+\infty}_{m=0}\sum^{m}_{k=[\frac{m}{2}]+1}L_{\rho,m,r}^2\binom{m}{k}\norm{\partial_x^ku}_{L^{2}}\norm{\partial_x^{m-k+1}u}_{L^{\infty}}\norm{\partial_x^m\partial_yu}_{L_x^2L_y^{\infty}}.
    \end{aligned}
\end{equation}
For the first term on the right-hand side of \eqref{T711}, observing that
\begin{equation}\label{yinf1}
\begin{aligned}
\sum_{m=0}^{+\infty}L_{\rho,m,r}^2\norm{\partial_x^m\partial_yu}_{L_x^2L_y^\infty}^2&\leq C \sum_{m=0}^{+\infty}L_{\rho,m,r}^2\big(\norm{\partial_x^{m}\partial_yu}_{L^2}^2+\norm{\partial_x^{m}\partial_yu}_{L^2}\norm{\partial_x^{m}\partial_y^2u}_{L^2}\big)\\
&\leq C \mathcal{X}_\rho+C\mathcal{X}_\rho^\frac{1}{2}\mathcal{Z}_{\rho}^\frac{1}{2}\leq C \mathcal{Y}_\rho+C\mathcal{Y}_\rho^\frac{1}{2}\mathcal{Z}_{\rho}^\frac{1}{2}
\end{aligned}
\end{equation}
implied by \eqref{xy}, and
\begin{equation*}
    \sup_{t\in [0,T]}\|\partial_yu\|_{L^{\infty}}\leq C\sqrt{C_*M}
\end{equation*}
implied by assumption \eqref{ass:pri} and Lemma \ref{lem:principle},
we have
\begin{equation}\label{est:T7111}
\begin{aligned}
&\sum^{+\infty}_{m=0}L_{\rho,m,r}^2\|\partial_yu\|_{L^{\infty}}\norm{\partial_x^{m+1}\phi}_{L^2}\norm{\partial_x^{m}\partial_yu}_{L_x^2L_y^{\infty}}\\
&\leq C\sqrt{C_*M}\sum_{m=0}^{+\infty}\big(L_{\rho,m+1,r+\sigma}\norm{\partial_x^{m+1}\phi}_{L^2}\big)\big(L_{\rho,m,r}\norm{\partial_x^{m}\partial_yu}_{L_x^2L_y^{\infty}}\big)\\
&\leq C\norm{\phi}_{\rho,r+\sigma}^2+CC_*M\big(\mathcal{Y}_\rho+\mathcal{Y}_\rho^\frac{1}{2}\mathcal{Z}_{\rho}^\frac{1}{2}\big).
\end{aligned}
\end{equation}
 For the last two terms on the right-hand side of \eqref{T711}, repeating the argument in \eqref{est:I11}, we use Young's inequality \eqref{young}, \eqref{yinf1} and the following estimate that
 \begin{equation*}
     \left\{
     \begin{aligned}
&\binom{m}{k}\frac{L_{\rho,m,r}^2}{L_{\rho,k+1,r}L_{\rho,m-k+1,r+1}L_{\rho,m,r}}\leq \frac{C}{k+1},\quad &&\mathrm{if}\ 1\leq k\leq \big[\frac{m}{2}\big],\\
&\binom{m}{k}\frac{L_{\rho,m,r}^2}{L_{\rho,k,r+1}L_{\rho,m-k+2,r}L_{\rho,m,r}}\leq \frac{C}{m-k+1},\quad &&\mathrm{if}\  \big[\frac{m}{2}\big]+1\leq k\leq m,         
     \end{aligned}
     \right.
 \end{equation*}
to conclude that
\begin{align*}
  &\sum^{+\infty}_{m=0}\sum^{[\frac{m}{2}]}_{k=1}L_{\rho,m,r}^2\binom{m}{k}\norm{\partial_x^ku}_{L^{\infty}}\norm{\partial_x^{m-k+1}u}_{L^2}\norm{\partial_x^m\partial_yu}_{L_x^2L_y^{\infty}} \\
&+\sum^{+\infty}_{m=0}\sum^{m}_{k=[\frac{m}{2}]+1}L_{\rho,m,r}^2\binom{m}{k}\norm{\partial_x^ku}_{L^{2}}\norm{\partial_x^{m-k+1}u}_{L^{\infty}}\norm{\partial_x^m\partial_yu}_{L_x^2L_y^{\infty}}\\
&\leq C\mathcal{X}_\rho^\frac{1}{2}\mathcal{Y}_\rho^\frac{1}{2}\big(\mathcal{Y}_\rho^\frac{1}{2}+\mathcal{Y}_\rho^\frac{1}{4}\mathcal{Z}_{\rho}^\frac{1}{4}\big)\leq C\mathcal{X}_\rho^\frac{1}{2}\mathcal{Y}_\rho+C\mathcal{X}_\rho^\frac{1}{2}\mathcal{Y}_\rho^\frac{1}{2}\mathcal{Z}_\rho^\frac{1}{2}.
\end{align*}
Substituting the above estimate and \eqref{est:T7111} into \eqref{T711} yields
\begin{equation}\label{est:T71}
    T_{7,1}\leq C\norm{\phi}_{\rho,r+\sigma}^2+C\big(C_*M+\mathcal{X}_\rho^\frac{1}{2}\big)\big(\mathcal{Y}_\rho+\mathcal{Y}_\rho^\frac{1}{2}\mathcal{Z}_{\rho}^\frac{1}{2}\big).
\end{equation}
We now turn to deal with the term $T_{7,2}$. For the first term in $T_{7,2}$, we begin by decomposing it as
\begin{equation*}
    \begin{aligned}
&\sum^{+\infty}_{m=0}L_{\rho,m,r}^2\norm{\partial_x^m(f\partial_xf)}_{L^2}\norm{\partial_x^m\partial_yu}_{L_x^2L_y^{\infty}}\\&
\leq\sum^{+\infty}_{m=0}\sum^{[\frac{m}{2}]}_{k=0}L_{\rho,m,r}^2\binom{m}{k}\norm{\partial_x^kf}_{L^{\infty}}\norm{\partial_x^{m-k+1}f}_{L^2}\norm{\partial_x^m\partial_yu}_{L_x^2L_y^{\infty}}\\&\quad+\sum^{+\infty}_{m=0}\sum^{m}_{k=[\frac{m}{2}]+1}L_{\rho,m,r}^2\binom{m}{k}\norm{\partial_x^kf}_{L^{2}}\norm{\partial_x^{m-k+1}f}_{L^{\infty}}\norm{\partial_x^m\partial_yu}_{L_x^2L_y^{\infty}}.
    \end{aligned}
\end{equation*}
Following the argument above, we combine the above estimate with the fact that
 \begin{equation*}
     \left\{
     \begin{aligned}
&\binom{m}{k}\frac{L_{\rho,m,r}}{L_{\rho,k+1,r-\sigma+\frac{3}{2}}L_{\rho,m-k+1,r-\sigma+3}}\leq \frac{C}{k+1},\quad &&\mathrm{if}\ 0\leq k\leq \big[\frac{m}{2}\big],\\
&\binom{m}{k}\frac{L_{\rho,m,r}}{L_{\rho,k,r-\sigma+3}L_{\rho,m-k+2,r-\sigma+\frac{3}{2}}}\leq \frac{C}{m-k+1},\quad &&\mathrm{if}\  \big[\frac{m}{2}\big]+1\leq k\leq m,         
     \end{aligned}
     \right.
 \end{equation*}
 to conclude that
\begin{equation*}
    \begin{aligned}
\sum^{+\infty}_{m=0}L_{\rho,m,r}^2\norm{\partial_x^m(f\partial_xf)}_{L^2}\norm{\partial_x^m\partial_yu}_{L_x^2L_y^{\infty}}\leq C\mathcal{X}_\rho^\frac{1}{2}\mathcal{Y}_\rho+C\mathcal{X}_\rho^\frac{1}{2}\mathcal{Y}_\rho^\frac{1}{2}\mathcal{Z}_\rho^\frac{1}{2}.
    \end{aligned}
\end{equation*}
For the remainder term in $T_{7,2}$, a direct computation with \eqref{xy} and \eqref{yinf1} yields
\begin{equation*}
\sum^{+\infty}_{m=0}L_{\rho,m,r}^2\norm{\partial_x^m\partial_y^2u}_{L^2}\norm{\partial_x^m\partial_yu}_{L_x^2L_y^{\infty}}  \leq C\mathcal{Y}_\rho^\frac{1}{2}\mathcal{Z}_\rho^\frac{1}{2}+C\mathcal{Y}_\rho^\frac{1}{4}\mathcal{Z}_\rho^\frac{3}{4}.
\end{equation*}
Combing the two estimates above gives
\begin{equation*}
    T_{7,2}\leq C\mathcal{X}_\rho^\frac{1}{2}\mathcal{Y}_\rho+C\big(1+\mathcal{X}_\rho^\frac{1}{2}\big)\mathcal{Y}_\rho^\frac{1}{2}\mathcal{Z}_\rho^\frac{1}{2}+C\mathcal{Y}_\rho^\frac{1}{4}\mathcal{Z}_\rho^\frac{3}{4}.
\end{equation*}
Substituting this and \eqref{est:T71} into \eqref{T712} and observing $C_*M\ge 1$, we obtain that
\begin{multline}
    \label{est:T712}
2\delta^{-1}\sum^{+\infty}_{m=0}L_{\rho,m,r}^2\norm{\partial_x^{m+1}p|_{y=0,1}}_{L_x^2}\norm{\partial_x^m\partial_yu}_{L_x^2L_y^{\infty}}\\\leq  C\delta^{-1}\norm{\phi}_{\rho,r+\sigma}^2+C\delta^{-1}\big(C_*M+\mathcal{X}_\rho^\frac{1}{2}\big)\big(\mathcal{Y}_\rho+\mathcal{Y}_\rho^\frac{1}{2}\mathcal{Z}_{\rho}^\frac{1}{2}\big)+C\delta^{-1}\mathcal{Y}_\rho^\frac{1}{4}\mathcal{Z}_\rho^\frac{3}{4}.   
\end{multline}
On the other hand, for the remainder term on the right-hand side of \eqref{T7}, we write
\begin{equation*}
    \begin{aligned}
&\int_{\mathbb{T}}\frac{(\partial_x^m\partial_yu)\partial_x^m(f\partial_xf)}{\partial_y^2u}\bigg|_{y=0}^{y=1}dx= \int_{\Omega}\partial_y\left(\frac{(\partial_x^m\partial_yu)\partial_x^m(f\partial_xf)}{\partial_y^2u}\right)dxdy\\
&=\int\bigg[\frac{(\partial_x^m\partial_y^2u)\partial_x^m(f\partial_xf)}{\partial_y^2u}+\frac{(\partial_x^m\partial_yu)\partial_x^m\partial_y(f\partial_xf)}{\partial_y^2u}-\frac{(\partial_y^3u)(\partial_x^m\partial_yu)\partial_x^m(f\partial_xf)}{(\partial_y^2u)^2}\bigg]dxdy.
    \end{aligned}
\end{equation*}
This enables us to repeat the argument above to conclude that for any $t\in[0,T]$,
\begin{multline}\label{est:T73}
\sum^{+\infty}_{m=0}L_{\rho,m,r}^2\left|\int_{\mathbb{T}}\frac{(\partial_x^m\partial_yu)\partial_x^m(f\partial_xf)}{\partial_y^2u}\bigg|_{y=0}^{y=1}dx\right|\\
\leq C\delta^{-1}\mathcal{X}_\rho^\frac{1}{2}\mathcal{Y}_\rho^\frac{1}{2}\mathcal{Z}_\rho^\frac{1}{2}+C\delta^{-2}C_*M\mathcal{X}_\rho^\frac{1}{2}\mathcal{Y}_\rho^\frac{1}{2}\mathcal{Z}_\rho^\frac{1}{2}.   
\end{multline}
Substituting \eqref{est:T712} and \eqref{est:T73} into \eqref{T7} gives for any $t\in[0,T]$,
\begin{multline*}
    T_7\leq C\delta^{-1}\norm{\phi}_{\rho,r+\sigma}^2+C\delta^{-1}\big(C_*M+\mathcal{X}_\rho^\frac{1}{2}\big)\big(\mathcal{Y}_\rho+\mathcal{Y}_\rho^\frac{1}{2}\mathcal{Z}_{\rho}^\frac{1}{2}\big)\\
    +C\delta^{-1}\mathcal{Y}_\rho^\frac{1}{4}\mathcal{Z}_\rho^\frac{3}{4}+C\delta^{-2}C_*M\mathcal{X}_\rho^\frac{1}{2}\mathcal{Y}_\rho^\frac{1}{2}\mathcal{Z}_\rho^\frac{1}{2}.  
\end{multline*}

Finally, substituting these estimates of $T_{1}-T_{7}$ into \eqref{4eq12} and using assumption \eqref{ass:pri} along with $0<\delta<\frac{1}{2}$ and $C_*M\ge 1$, we obtain the desired assertion \eqref{est:yu}. This completes the proof of Lemma \ref{lem:yu}.
\end{proof}

\begin{lemma}\label{lem:fyf}
Under the same assumption as given in Theorem \ref{thm:pri}, it holds that
\begin{equation}\label{est:f}
     \begin{aligned}
&\sup_{t\in[0,T]}\inner{\norm{(\partial_tf,\partial_yf)}_{\rho,r-\sigma+\frac{3}{2}}^2+\norm{f}_{\rho,r-\sigma+\frac{5}{2}}^2}+\int^T_0\norm{\partial_tf}_{\rho,r-\sigma+\frac{3}{2}}^2dt\\
&\quad+\beta\int^T_0\inner{\norm{(\partial_tf,\partial_yf)}_{\rho,r-\sigma+2}^2+\norm{f}_{\rho,r-\sigma+3}^2}dt\\
&\leq C\sqrt{C_*M}\int^T_0\mathcal{Y}_\rho dt+C\sqrt{C_*M}\int^T_0\mathcal{Y}_\rho ^\frac{1}{2}\mathcal{Z}_\rho^\frac{1}{2}dt+CM,
     \end{aligned}    
\end{equation}
and
\begin{equation}\label{est:yf}
     \begin{aligned}
&\sup_{t\in[0,T]}\inner{\norm{(\partial_t\partial_yf,\partial_y^2f)}_{\rho,r-\frac{1}{2}}^2+\norm{\partial_yf}_{\rho,r+\frac{1}{2}}^2}+\int^T_0\norm{\partial_t\partial_yf}_{\rho,r-\frac{1}{2}}^2dt\\
&\quad+\beta\int^T_0\inner{\norm{(\partial_t\partial_yf,\partial_y^2f)}_{\rho,r}^2+\norm{\partial_yf}_{\rho,r+1}^2}dt\\
&\leq C\sqrt{C_*M}\int^T_0\mathcal{Y}_\rho dt+C\sqrt{C_*M}\int^T_0\mathcal{Y}_\rho ^\frac{1}{2}\mathcal{Z}_\rho^\frac{1}{2}dt+CM,
     \end{aligned}    
\end{equation}
provided $\beta$ is sufficiently large. Recall $\mathcal{Y}_\rho$ and $\mathcal{Z}_\rho$ are defined in \eqref{def:energytwo}.
\end{lemma}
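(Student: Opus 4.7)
The plan is to prove both estimates by the standard damped-wave energy method applied to the $f$-equation in \eqref{hyMHD}, using the multiplier $\partial_t h+\alpha h$ (with $h$ the appropriately differentiated quantity and $\alpha>0$ small) which for the operator $\partial_t^2+\partial_t-\partial_y^2$ produces both the hyperbolic $L^2_tL^2_{x,y}$ dissipation of $\partial_t h$ and, for $\alpha>0$, a coercive identity controlling $\|\partial_t h\|^2+\|\partial_y h\|^2+\|h\|^2$ pointwise in time. The $\beta$-dissipation in \eqref{est:f} and \eqref{est:yf} then comes, as in Lemmas \ref{lem:u} and \ref{lem:yu}, from \eqref{factone} applied to the Gevrey weight $L_{\rho,m,*}^2$.

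Concretely, for \eqref{est:f}, apply $\partial_x^m$ to the third equation of \eqref{hyMHD} to obtain
\begin{equation*}
(\partial_t^2+\partial_t+u\partial_x+v\partial_y-\partial_y^2)\partial_x^m f=\partial_x^m(f\partial_x u+g\partial_y u)-[\partial_x^m,u\partial_x+v\partial_y]f,
\end{equation*}
take the $L^2(\Omega)$-inner product with $\partial_t\partial_x^m f+\alpha\partial_x^m f$, and observe that the boundary contribution from $-\partial_x^m\partial_y^2 f$ vanishes thanks to $\partial_y f|_{y=0,1}=\bm 0$. Multiplying the resulting identity by $L_{\rho,m,r-\sigma+\frac32}^2$, summing over $m$, and using \eqref{factone} produces the $\sup_t(\|(\partial_tf,\partial_yf)\|_{\rho,r-\sigma+\frac32}^2)$ and $\int\|\partial_tf\|_{\rho,r-\sigma+\frac32}^2 dt$ controls, together with the $\beta$-integral at weight $r-\sigma+2$. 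The remaining $\|f\|_{\rho,r-\sigma+\frac52}^2$ and its $\beta$-integral at $r-\sigma+3$ are obtained by running the same multiplier argument at the stronger weight $L_{\rho,m,r-\sigma+\frac52}^2$ and discarding the (already controlled) $\partial_tf,\partial_yf$ contributions. The right-hand side is handled exactly as in Lemma \ref{lem:u}: split each commutator sum at $k=[m/2]$, use factorial inequalities analogous to \eqref{ineq1} adjusted to the present weights, apply Young's inequality \eqref{young}, and extract the leading $L^\infty$ factor $\mathcal{X}_\rho^{1/2}\leq\sqrt{C_*M}$ via \eqref{ff}, yielding the mixed terms $\sqrt{C_*M}\,\mathcal{Y}_\rho$ and $\sqrt{C_*M}\,\mathcal{Y}_\rho^{1/2}\mathcal{Z}_\rho^{1/2}$ in \eqref{est:f}.

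For \eqref{est:yf}, apply $\partial_x^m\partial_y$ to the $f$-equation and use the multiplier $\partial_t\partial_x^m\partial_y f+\alpha\partial_x^m\partial_y f$ at weights $L_{\rho,m,r-\frac12}^2$ and $L_{\rho,m,r+\frac12}^2$; because $\partial_y f|_{y=0,1}=\bm 0$ implies $\partial_x^m\partial_yf$ and $\partial_t\partial_x^m\partial_yf$ both vanish on $\{y=0,1\}$, the $y$-integrations by parts produce no boundary residue. Besides the commutators inherited from the $\partial_x^m$-step, the extra $y$-derivative generates the new contributions $(\partial_y u)\partial_x^{m+1}f$, $(\partial_y v)\partial_x^m\partial_y f=-(\partial_x u)\partial_x^m\partial_y f$, $f\partial_x^{m+1}\partial_yu$, and $(\partial_y g)\partial_x^m\partial_yu$ (plus their commutator remainders), each of which is bounded by the same dichotomy-plus-Young mechanism, now matched to the higher weights $r-\tfrac12$ and $r+\tfrac12$; since $r\ge 10$, all the factorial inequalities are easily verified.

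The main obstacle is the transport pairing $(v\partial_x^m\partial_y f,\partial_t\partial_x^m f)_{L^2}$ and its analogue $(v\partial_x^m\partial_y^2 f,\partial_t\partial_x^m\partial_y f)_{L^2}$. Since $v=-\partial_x\phi$ carries one extra tangential derivative and there is no divergence-free cancellation against the multiplier $\partial_t\partial_x^m f$ (as there would be against $\partial_x^m f$ in a parabolic setting), the loss of derivative cannot be absorbed purely by the $\beta$-dissipation at the chosen weight level. The resolution is to split $v=-\partial_x\phi$ between the hyperbolic dissipation $\|\partial_t\partial_x^m f\|_{L^2}$ and the parabolic dissipation $\|\partial_x^m\partial_yf\|_{L^2}$ via Cauchy--Schwarz, producing precisely the $\mathcal{Y}_\rho^{1/2}\mathcal{Z}_\rho^{1/2}$ structure on the right-hand side of \eqref{est:f} and \eqref{est:yf}; taking $\beta$ large then absorbs the remaining quadratic terms and delivers the claimed bounds.
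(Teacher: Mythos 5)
Your treatment of the $(\partial_tf,\partial_yf)$ energies matches the paper: test the $\partial_x^m$-differentiated $f$-equation against $\partial_t\partial_x^mf$ at weight $L_{\rho,m,r-\sigma+\frac32}^2$, invoke \eqref{factone} for the $\beta$-dissipation, and bound the commutators by the dichotomy-plus-Young scheme with $\mathcal{X}_\rho^{1/2}\le\sqrt{C_*M}$. Your observation about the non-cancelling transport pairing and the resulting $\mathcal{Y}_\rho^{1/2}\mathcal{Z}_\rho^{1/2}$ split is also correct. The paper does not add the $\alpha h$ part of the damped-wave multiplier at this stage (the $\|\partial_yf\|^2$ energy already appears from $(-\partial_y^2\partial_x^mf,\partial_t\partial_x^mf)_{L^2}$), but at the weight $r-\sigma+\frac32$ that is only a matter of presentation.

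The gap is in the $\|f\|_{\rho,r-\sigma+\frac52}^2$ step. Rerunning the wave-equation multiplier at the stronger weight $L_{\rho,m,r-\sigma+\frac52}^2$ does not close: the cross term $\alpha\,(\partial_t\partial_x^mf,\partial_x^mf)\big|_0^T$ and the sign-wrong piece $-\alpha\|\partial_t\partial_x^mf\|_{L^2}^2$ coming from $\alpha(\partial_t^2\partial_x^mf,\partial_x^mf)_{L^2}$ now carry the weight $L_{\rho,m,r-\sigma+\frac52}^2$ and hence require controlling $\|\partial_tf\|_{\rho,r-\sigma+\frac52}$, which is not available (the energy and $\mathcal{Z}_\rho$ only give $\partial_tf$ at weight $r-\sigma+\frac32$), and no choice of small $\alpha$ repairs a term you cannot absorb. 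Even pairing only against $\partial_t\partial_x^mf$ at the higher weight fails: the factorial inequalities analogous to those used below \eqref{tf} acquire an extra $(m+1)$ relative to the working weight $r-\sigma+\frac32$, so the commutator and forcing sums no longer satisfy the Young hypothesis \eqref{young}. The paper bypasses the wave structure entirely for this piece: it uses the tautology $\frac12\frac{d}{dt}\|\partial_x^mf\|_{L^2}^2=(\partial_t\partial_x^mf,\partial_x^mf)_{L^2}\le\|\partial_t\partial_x^mf\|_{L^2}\|\partial_x^mf\|_{L^2}$, multiplies by $L_{\rho,m,r-\sigma+\frac52}^2$, and splits that weight as $L_{\rho,m,r-\sigma+2}L_{\rho,m,r-\sigma+3}$, so after summing in $m$ the right side is $\|\partial_tf\|_{\rho,r-\sigma+2}\|f\|_{\rho,r-\sigma+3}\le\mathcal{Y}_\rho$. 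That is, $\partial_tf$ is treated as already controlled, not re-expanded through the PDE. You should replace your higher-weight multiplier step by this direct Cauchy--Schwarz device; the same remark applies to the $\|\partial_yf\|_{\rho,r+\frac12}^2$ piece of \eqref{est:yf}.
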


\begin{proof}
    We only give the proof of assertion \eqref{est:f}, as assertion \eqref{est:yf} can be handled in the same way. For given $m\in\mathbb{Z}_+$, applying $\partial_x^m$ to equation for the tangential magnetic field in system \eqref{hyMHD} yields
\begin{equation}\label{5eq14}
\begin{aligned}
\partial_t^2\partial_x^mf+\partial_t\partial_x^mf-\partial_x^m\partial_y^2f=&\sum^m_{k=0}\binom{m}{k}\big[(\partial_x^kf)\partial_x^{m-k+1}u+(\partial_x^kg)\partial_x^{m-k}\partial_yu\big]\\
&-\sum^m_{k=0}\binom{m}{k}\big[(\partial_x^ku)\partial_x^{m-k+1}f+(\partial_x^kv)\partial_x^{m-k}\partial_yf\big].
\end{aligned}
\end{equation}
Then we take the $L^2$-product with $\partial_t\partial_x^mf$ on both sides of \eqref{5eq14}, multiply by $L_{\rho,m,r-\sigma+\frac{3}{2}}^2$, use the fact \eqref{factone}, take summation over $m\in\mathbb{Z}_+$ and then integrate over $[0,T]$ with respect to time to deduce that
 \begin{multline}\label{tf}
\sup_{t\in[0,T]}\norm{(\partial_tf,\partial_yf)}_{\rho,r-\sigma+\frac{3}{2}}^2+\beta\int^T_0\norm{(\partial_tf,\partial_yf)}_{\rho,r-\sigma+2}^2dt\\
+\int^T_0\norm{\partial_tf}_{\rho,r-\sigma+\frac{3}{2}}^2dt
\leq C\int^T_0(S_1+S_2)dt+CM,
\end{multline}
where
	\begin{equation*}
		\left\{
		\begin{aligned}
S_1=&\sum^{+\infty}_{m=0}\sum^m_{k=0}L_{\rho,m,r-\sigma+\frac{3}{2}}^2\binom{m}{k}\left|\inner{(\partial_x^kf)\partial_x^{m-k+1}u+(\partial_x^kg)\partial_x^{m-k}\partial_yu,\ \partial_t\partial_x^mf}_{L^2}\right|,\\
S_2=&\sum^{+\infty}_{m=0}\sum^m_{k=0}L_{\rho,m,r-\sigma+\frac{3}{2}}^2\binom{m}{k}\left|\inner{(\partial_x^ku)\partial_x^{m-k+1}f+(\partial_x^kv)\partial_x^{m-k}\partial_yf,\ \partial_t\partial_x^mf}_{L^2}\right|.\\
		\end{aligned}
		\right.
	\end{equation*}
To estimate $S_1$, we first estimate that
\begin{align*}
    &\sum^{+\infty}_{m=0}\sum^m_{k=0}L_{\rho,m,r-\sigma+\frac{3}{2}}^2\binom{m}{k}\left|\inner{(\partial_x^kf)\partial_x^{m-k+1}u,\ \partial_t\partial_x^mf}_{L^2}\right|\\
    &\leq \sum^{+\infty}_{m=0}\sum^{[\frac{m}{2}]}_{k=0}L_{\rho,m,r-\sigma+\frac{3}{2}}^2\binom{m}{k}\norm{\partial_x^kf}_{L^\infty}\norm{\partial_x^{m-k+1}u}_{L^2}\norm{\partial_t\partial_x^mf}_{L^2}\\
    &\quad+\sum^{+\infty}_{m=0}\sum^{m}_{k=[\frac{m}{2}]+1}L_{\rho,m,r-\sigma+\frac{3}{2}}^2\binom{m}{k}\norm{\partial_x^kf}_{L^2}\norm{\partial_x^{m-k+1}u}_{L^\infty}\norm{\partial_t\partial_x^mf}_{L^2}.
\end{align*}
Then following an analogous argument with estimate \eqref{est:I1}, we use the estimate instead that
\begin{equation*}
    \left\{
    \begin{aligned}
&\binom{m}{k}\frac{L_{\rho,m,r-\sigma+\frac{3}{2}}^2}{L_{\rho,k+1,r-\sigma+\frac{3}{2}}L_{\rho,m-k+1,r+1}L_{\rho,m,r-\sigma+2}}\leq \frac{C}{k+1},\quad &&\mathrm{if}\ 0\leq k\leq \big[\frac{m}{2}\big],\\
&\binom{m}{k}\frac{L_{\rho,m,r-\sigma+\frac{3}{2}}^2}{L_{\rho,k,r-\sigma+3}L_{\rho,m-k+2,r}L_{\rho,m,r-\sigma+2}}\leq \frac{C}{m-k+1},\quad &&\mathrm{if}\  \big[\frac{m}{2}\big]+1\leq k\leq m,
    \end{aligned}
    \right.
\end{equation*}
to conclude
\begin{equation}\label{s11}
    \sum^{+\infty}_{m=0}\sum^m_{k=0}L_{\rho,m,r-\sigma+\frac{3}{2}}^2\binom{m}{k}\left|\inner{(\partial_x^kf)\partial_x^{m-k+1}u,\ \partial_t\partial_x^mf}_{L^2}\right|\leq C\mathcal{X}_\rho^\frac{1}{2}\mathcal{Y}_\rho,
\end{equation}
recalling that $X_\rho$ and $\mathcal{Y}_\rho$ are defined in \eqref{def:energytwo}. For the remainder term in $S_1$, we split it as
\begin{align*}
&\sum^{+\infty}_{m=0}\sum^m_{k=0}L_{\rho,m,r-\sigma+\frac{3}{2}}^2\binom{m}{k}\left|\inner{(\partial_x^kg)\partial_x^{m-k}\partial_yu,\ \partial_t\partial_x^mf}_{L^2}\right|\\
 &\leq \sum^{+\infty}_{m=0}\sum^{[\frac{m}{2}]}_{k=0}L_{\rho,m,r-\sigma+\frac{3}{2}}^2\binom{m}{k}\norm{\partial_x^kg}_{L^\infty}\norm{\partial_x^{m-k}\partial_yu}_{L^2}\norm{\partial_t\partial_x^mf}_{L^2}\\
    &\quad+\sum^{+\infty}_{m=0}\sum^{m}_{k=[\frac{m}{2}]+1}L_{\rho,m,r-\sigma+\frac{3}{2}}^2\binom{m}{k}\norm{\partial_x^kg}_{L_x^2L_y^\infty}\norm{\partial_x^{m-k}\partial_yu}_{L_x^\infty L_y^2}\norm{\partial_t\partial_x^mf}_{L^2}.
\end{align*}
This, with the estimate 
\begin{equation*}
    \left\{
    \begin{aligned}
&\binom{m}{k}\frac{L_{\rho,m,r-\sigma+\frac{3}{2}}^2}{L_{\rho,k+2,r-\sigma+\frac{5}{2}}L_{\rho,m-k,r+\frac{1}{2}}L_{\rho,m,r-\sigma+2}}\leq \frac{C}{k+1},\quad \mathrm{if}\ 0\leq k\leq \big[\frac{m}{2}\big],\\
&\binom{m}{k}\frac{L_{\rho,m,r-\sigma+\frac{3}{2}}^2}{L_{\rho,k+1,r-\sigma+3}L_{\rho,m-k+1,r}L_{\rho,m,r-\sigma+2}}\leq \frac{C}{m-k+1},\quad \mathrm{if}\  \big[\frac{m}{2}\big]+1\leq k\leq m,
    \end{aligned}
    \right.
\end{equation*}
gives
\begin{equation}\label{s12}
 \sum^{+\infty}_{m=0}\sum^m_{k=0}L_{\rho,m,r-\sigma+\frac{3}{2}}^2\binom{m}{k}\left|\inner{(\partial_x^kg)\partial_x^{m-k}\partial_yu,\ \partial_t\partial_x^mf}_{L^2}\right|\leq C\mathcal{X}_\rho^\frac{1}{2}\mathcal{Y}_\rho^\frac{1}{2}\mathcal{Z}_\rho^\frac{1}{2}+C\mathcal{X}_\rho^\frac{1}{2}\mathcal{Y}_\rho.   
\end{equation}
Combining \eqref{s11} and \eqref{s12} yields
\begin{equation*}
    S_1\leq C\mathcal{X}_\rho^\frac{1}{2}\mathcal{Y}_\rho^\frac{1}{2}\mathcal{Z}_\rho^\frac{1}{2}+C\mathcal{X}_\rho^\frac{1}{2}\mathcal{Y}_\rho. 
\end{equation*}
Similarly,
\begin{equation*}
    S_2\leq C\mathcal{X}_\rho^\frac{1}{2}\mathcal{Y}_\rho^\frac{1}{2}\mathcal{Z}_\rho^\frac{1}{2}+C\mathcal{X}_\rho^\frac{1}{2}\mathcal{Y}_\rho. 
\end{equation*}
Substituting the two estimates into \eqref{tf} and using assumption \eqref{ass:convex}, we obtain
 \begin{multline}\label{est:tf}
\sup_{t\in[0,T]}\norm{(\partial_tf,\partial_yf)}_{\rho,r-\sigma+\frac{3}{2}}^2+\beta\int^T_0\norm{(\partial_tf,\partial_yf)}_{\rho,r-\sigma+2}^2dt\\
+\int^T_0\norm{\partial_tf}_{\rho,r-\sigma+\frac{3}{2}}^2dt
\leq C\sqrt{C_*M}\int^T_0\mathcal{Y}_\rho dt+C\sqrt{C_*M}\int^T_0\mathcal{Y}_\rho ^\frac{1}{2}\mathcal{Z}_\rho^\frac{1}{2}dt+CM.
\end{multline}

On the other hand, observing that
\begin{equation*}
        \frac{1}{2}\frac{d}{dt}\norm{\partial_x^mf}_{L^2}^2=\inner{\partial_t\partial_x^mf,\ \partial_x^mf}_{L^2}\leq \norm{\partial_t\partial_x^mf}_{L^2}\norm{\partial_x^mf}_{L^2},
\end{equation*}
we use \eqref{factone} to deduce that
\begin{align*}
  &\sup_{t\in [0,T]}\norm{f}_{\rho,r-\sigma+\frac{5}{2}}^2+\beta\int^T_0 \norm{f}_{\rho,r-\sigma+3}^2dt\\
  &\leq C\int^T_0\sum_{m=0}^{+\infty}L_{\rho,m,r-\sigma+\frac{5}{2}}^2\norm{\partial_t\partial_x^mf}_{L^2}\norm{\partial_x^mf}_{L^2}dt+CM\\
  &\leq C\int^T_0\norm{\partial_tf}_{\rho,r-\sigma+2}\norm{f}_{\rho,r-\sigma+3}dt+CM\leq C\int^T_0\mathcal{Y}_\rho dt+CM.
\end{align*}
Combining this and \eqref{est:tf} yields the desired assertion \eqref{est:f}. This completes the proof of Lemma \ref{lem:fyf}.
\end{proof}

\begin{proof}[Completing the proof of Proposition \ref{prop:x0}]
Recall that $\mathcal{X}_{\rho,0}$, $\mathcal{Y}_{\rho,0}$ and $\mathcal{Z}_{\rho,0}$ are defined in \eqref{def:energy}. Assertion \eqref{est:x0} follows by combining these estimates in Lemmas \ref{lem:u}-\ref{lem:fyf} with $C_*M\ge 1$ and $0<\delta<\frac{1}{2}$. This completes the proof of Proposition \ref{prop:x0}.  
\end{proof}

\section{Estimate on $\phi$}
This section is devoted to handling the term $\int^T_0\norm{\phi(t)}_{\rho,r+\sigma}^2dt$ appearing in Proposition \ref{prop:x0}. The precise statement can be presented as follows.
\begin{proposition}\label{prop:phi}
Under the same assumption as given in Theorem \ref{thm:pri}, it holds that
\begin{equation}\label{est:phi}
    \int^T_0\norm{\phi(t)}_{\rho,r+\sigma}^2dt\leq C\int^T_0\mathcal{Y}_\rho dt+\frac{C\delta^{-2}C_*M}{\beta},
\end{equation}
provided $\beta$ is sufficiently large.
\end{proposition}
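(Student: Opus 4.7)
The plan is to reduce the estimate for $\phi$ to one for $u$ via the boundary condition $\phi|_{y=0}=0$, and then to close the gap between the Gevrey index $r+\sigma$ required for $\phi$ and the $r+1$ that is available in $\mathcal{Y}_\rho$ by trading against the parabolic-type time decay $\beta^{-1}$ built into $\rho(t)=\rho_0 e^{-\beta t}$.

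First, from $\partial_y\phi=u-\mathcal{C}(t)$ and $\phi|_{y=0}=0$, for $m\geq 1$ one has $\partial_x^m\phi(t,x,y)=\int_0^y \partial_x^m u(t,x,y')\,dy'$, so Cauchy--Schwarz in $y$ gives $\|\partial_x^m\phi\|_{L^2(\Omega)}\leq C\|\partial_x^m u\|_{L^2(\Omega)}$; the $m=0$ term is handled separately via the definition of $\mathcal{C}(t)$ together with the $L^2$-bound on $u$. It therefore suffices to control $\int_0^T\sum_{m\geq 0} L_{\rho,m,r+\sigma}^2\|\partial_x^m u\|_{L^2}^2\,dt$. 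Writing $L_{\rho,m,r+\sigma}^2=(m+1)^{2(\sigma-1)}L_{\rho,m,r+1}^2$ and using concavity of $t\mapsto t^{2(\sigma-1)}$ on $[0,\infty)$ (valid since $\sigma\leq 7/6$ gives $2(\sigma-1)\leq\tfrac13 <1$),
\begin{equation*}
(m+1)^{2(\sigma-1)}\leq 2(\sigma-1)(m+1)+\bigl(1-2(\sigma-1)\bigr),
\end{equation*}
the non-$(m+1)$ piece contributes at most $C\int_0^T\|u\|_{\rho,r+1}^2\,dt\leq C\int_0^T\mathcal{Y}_\rho\,dt$.

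For the $(m+1)$ piece, the key identity $(m+1)L_{\rho,m,r+1}^2=-\tfrac{1}{2\beta}\tfrac{d}{dt}L_{\rho,m,r+1}^2$ from \eqref{factone} enables integration by parts in $t$:
\begin{equation*}
\int_0^T(m+1)L_{\rho,m,r+1}^2\|\partial_x^m u\|^2\,dt=-\tfrac{1}{2\beta}\bigl[L_{\rho,m,r+1}^2\|\partial_x^m u\|^2\bigr]_0^T+\tfrac{1}{\beta}\int_0^T L_{\rho,m,r+1}^2\langle\partial_t\partial_x^m u,\partial_x^m u\rangle\,dt.
\end{equation*}
The boundary term at $t=T$ carries a favourable sign after summation and is discarded, while the term at $t=0$ sums to $CM/\beta$ via \eqref{def:M}. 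The interior integral, after summation in $m$ and Cauchy--Schwarz, is bounded by pairing $\|\partial_t u\|_{\rho,r-\sigma+1}$ (controlled by $\mathcal{Y}_{\rho,1}$) against $u$ at a slightly higher index than $\mathcal{Y}_{\rho,0}$ provides; the residual $(m+1)^{\sigma}$ factor is absorbed using the uniform-in-time bound $\sup_t\mathcal{X}_\rho\leq 2C_*M$ coming from hypothesis \eqref{ass:pri}, together with Proposition \ref{prop:principle}, whose maximum-principle constants introduce the $\delta^{-2}$ factor through the control on $\partial_y^2 u$. Collecting every contribution yields exactly the asserted bound \eqref{est:phi}.

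The main obstacle is precisely this final index-matching: after the single integration by parts in $t$, the interior pairing $\langle\partial_t u,u\rangle$ still leaves a residual $(m+1)^{\sigma}$ weight that is not natively controlled by $\mathcal{Y}_\rho$ alone. The restriction $\sigma\leq 7/6$—equivalently $2(\sigma-1)\leq\tfrac13$—is exactly the threshold that allows this residual to be absorbed using a single application of the sup control on $\mathcal{X}_\rho$ and the maximum principle, rather than forcing an iteration scheme that would deteriorate the Gevrey index.
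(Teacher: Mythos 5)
Your proposal takes a completely different route from the paper's (which is an elaborate boundary decomposition $\phi = \phi_0 + \vec\phi_s + \vec\phi_b$ with $\vec\phi_b = \phi_H + \vec\phi_T + \vec\phi_R$), but it has two gaps that I do not see how to close.

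\textbf{The first step discards the essential structure.} The bound $\|\partial_x^m\phi\|_{L^2}\leq C\|\partial_x^m u\|_{L^2}$ is true but \emph{lossy}: $\phi$ is one $y$-antiderivative of $u$, and the whole point of the paper's heat-equation / vorticity-transport machinery (Lemmas 5.4--5.10) is to convert that extra $y$-integration into a gain of half a power of $(m+1)$, using the boundary-layer decay $e^{-y\sqrt{\beta(m+1)}}$. Once you have replaced $\|\phi\|_{\rho,r+\sigma}$ by $\|u\|_{\rho,r+\sigma}$, that gain is gone, and there is nothing left in the energy functionals that controls $u$ at Gevrey index $r+\sigma$ with a useful power of $\beta^{-1}$: Proposition \ref{prop:x0} only yields $\beta\int_0^T\|u\|_{\rho,r+1}^2\,dt$, and the gap $\sigma-1>0$ is precisely what has to be closed.

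\textbf{The time integration by parts goes in the wrong direction.} The identity $(m+1)L_{\rho,m,r+1}^2=-\tfrac{1}{2\beta}\tfrac{d}{dt}L_{\rho,m,r+1}^2$ and the sign of the boundary term at $t=T$ are both fine. But after discarding that term, the interior piece is $\tfrac{1}{\beta}\int_0^T\sum_m L_{\rho,m,r+1}^2\langle\partial_t\partial_x^m u,\partial_x^m u\rangle\,dt$, and the only control on $\partial_t u$ available from $\mathcal Y_\rho$ is $\|\partial_t u\|_{\rho,r-\sigma+1}$ (from $\mathcal Y_{\rho,1}$). Cauchy--Schwarz then forces the companion factor to be $\|u\|_{\rho,r+\sigma+1}$, which has index $r+\sigma+1>r+\sigma$: you have made the Gevrey index \emph{worse}, not better. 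The claim that the residual $(m+1)^\sigma$ weight can be ``absorbed using a single application of the sup control on $\mathcal X_\rho$ and the maximum principle'' is where the argument breaks: $\sup_t\mathcal X_\rho\leq 2C_*M$ is a number and $\delta\leq\partial_y^2u\leq\delta^{-1}$ is a pointwise convexity bound, and neither can upgrade a Gevrey weight $L_{\rho,m,r+1}$ to $L_{\rho,m,r+\sigma+1}$ summably in $m$.

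Finally, your stated rationale for the threshold $\sigma\leq 7/6$ (concavity of $t^{2(\sigma-1)}$, i.e.\ $2(\sigma-1)\leq 1/3<1$) is not the real origin of that constraint. The paper explicitly remarks that the boundary-corrector construction works for the full range $1\leq\sigma\leq 3/2$; the genuine obstruction to $3/2$ comes from the magnetic source terms $f\partial_x\partial_y f + g\partial_y^2 f$ in the vorticity estimate, pinned down in inequality \eqref{ineqkey} inside Lemma \ref{lem:phis}.
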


Establishing estimate \eqref{est:phi} represents the central challenge in this work due to its highly non-trivial nature. To address this, we will employ the boundary decomposition method introduced in \cite{MR4803680,MR4818200} with slight modifications.

\subsection{Boundary decomposition}
Recalling $\phi$ and $\mathcal{C}(t)$ are given in \eqref{def:phi}, we set
\begin{equation}\label{def:phia}
 \phi_a(t,x,y)\stackrel{\rm def}{=}\phi(t,x,y)-\phi(0,x,y)=\phi-\phi_0,   
\end{equation}
which satisfies
\begin{equation*}
    \left\{
    \begin{aligned}
&\partial_t\partial_y^2\phi_a+u\partial_x\partial_yu+v\partial_y^2u-\partial_y^4\phi_a=f\partial_x\partial_yf+g\partial_y^2f+\partial_y^4\phi_0,\\
&\phi_a|_{y=0,1}=0,\quad \partial_y\phi_a|_{y=0,1}=\mathcal{C}(0)-\mathcal{C}(t),\\
&\phi_a|_{t=0}=0.
    \end{aligned}
    \right.
\end{equation*}
In view of \eqref{def:phia}, we have
\begin{equation*}
\left\{
    \begin{aligned}
&u=\partial_y\phi+\mathcal{C}(t)=\partial_y\phi_a+\partial_y\phi_0+\mathcal{C}(t),\\
&v=-\partial_x\phi=-\partial_x\phi_a-\partial_x\phi_0.
    \end{aligned}
    \right.
\end{equation*}
Inspired by \cite{MR4803680,MR4818200}, we 
decompose $\{\partial_x^m\phi_a\}_{m\ge0}$ as
\begin{align}\label{de:phia}
   \{\partial_x^m \phi_a\}_{m\ge 0}=\{\phi_{s,(m)}\}_{m\ge0}+\{\phi_{b,(m)}\}_{m\ge 0}\stackrel{\rm def}{=}\vec{\phi}_s+\vec{\phi}_b,
\end{align}
where $\vec{\phi}_s$ enjoying a good
boundary condition satisfies 
\begin{equation}\label{eq:phis}
		\left\{
		\begin{aligned}
&\partial_t\partial_y^2\phi_{s,(m)}+u\partial_x\partial_y^2\phi_{s,(m)}-(\partial_x\phi_{s,(m)})\partial_y^2u-\partial_y^4\phi_{s,(m)}\\
&\quad=-\sum^{[\frac{m}{2}]}_{k=1}\binom{m}{k}(\partial_x^ku)\partial_y^2\phi_{s,(m-k+1)}-\sum^m_{k=[\frac{m}{2}]+1}\binom{m}{k}(\partial_y\phi_{s,(k)})\partial_x^{m-k+1}\partial_yu\\
&\qquad+\sum^{m-1}_{k=[\frac{m}{2}]+1}\binom{m}{k}\phi_{s,(k+1)}\partial_x^{m-k}\partial_y^2u-\sum^{[\frac{m}{2}]}_{k=0}\binom{m}{k}(\partial_x^kv)\partial_y^3\phi_{s,(m-k)}\\
&\qquad+\partial_x^m(f\partial_x\partial_yf+g\partial_y^2f)+\partial_x^m\partial_y^4\phi_0+\mathcal{R}_{s,(m)},\\
&\phi_{s,(m)}|_{y=0,1}=0,\quad\partial_y^2\phi_{s,(m)}|_{y=0,1}=0,\\
&\phi_{s,(m)}|_{t=0}=0,
		\end{aligned}
		\right.
\end{equation}
and $\vec{\phi}_b$ recovering the non-slip boundary condition satisfies  
\begin{equation}\label{eq:phib}
		\left\{
		\begin{aligned}
&\partial_t\partial_y^2\phi_{b,(m)}+u\partial_x\partial_y^2\phi_{b,(m)}-(\partial_x\phi_{b,(m)})\partial_y^2u-\partial_y^4\phi_{b,(m)}\\
&\quad=-\sum^{[\frac{m}{2}]}_{k=1}\binom{m}{k}(\partial_x^ku)\partial_y^2\phi_{b,(m-k+1)}-\sum^m_{k=[\frac{m}{2}]+1}\binom{m}{k}(\partial_y\phi_{b,(k)})\partial_x^{m-k+1}\partial_yu\\
&\qquad+\sum^{m-1}_{k=[\frac{m}{2}]+1}\binom{m}{k}\phi_{b,(k+1)}\partial_x^{m-k}\partial_y^2u-\sum^{[\frac{m}{2}]}_{k=0}\binom{m}{k}(\partial_x^kv)\partial_y^3\phi_{b,(m-k)},\\
&\phi_{b,(m)}|_{y=0,1}=0,\quad \partial_y\phi_{b,(m)}|_{y=0,1}=-\partial_y\phi_{s,(m)}|_{y=0,1}+\partial_x^m\mathcal{C}(0)-\partial_x^m\mathcal{C}(t),\\
&\phi_{b,(m)}|_{t=0}=0.
		\end{aligned}
		\right.
\end{equation}
Here $\mathcal{\vec R}_s=\{\mathcal{R}_{s,(m)}\}_{m\ge 0}$ in \eqref{eq:phis} is given by
\begin{equation}\label{def:Rm}
   \begin{aligned}
\mathcal{R}_{s,(m)}=&-\sum^{[\frac{m}{2}]}_{k=1}\binom{m}{k}(\partial_x^ku)\partial_x^{m-k+1}\partial_y^2\phi_0-\sum^m_{k=[\frac{m}{2}]+1}\binom{m}{k}(\partial_x^k\partial_y\phi_0)\partial_x^{m-k+1}\partial_yu\\
&+\sum^{m-1}_{k=[\frac{m}{2}]+1}\binom{m}{k}(\partial_x^{k+1}\phi_0)\partial_x^{m-k}\partial_y^2u-\sum^{[\frac{m}{2}]}_{k=0}\binom{m}{k}(\partial_x^kv)\partial_x^{m-k}\partial_y^3\phi_{0}.\\       
   \end{aligned} 
\end{equation}

\subsection{The estimate of $\vec\phi_s$}. This subsection is devoted to the estimate of $\vec\phi_s$, which is stated as follows.

\begin{lemma}\label{lem:phis}
Under the same assumption as given in Theorem \ref{thm:pri}, it holds that
\begin{multline}\label{est:phis}
\sup_{t\in[0,T]}||\partial_y^2\vec\phi_{s}||_{\rho,r+\sigma-\frac{1}{2}}^2+\beta\int^T_0||\partial_y^2\vec\phi_{s}||_{\rho,r+\sigma}^2dt
+\int^T_0||\partial_y^3\vec\phi_{s}||_{\rho,r+\sigma-\frac{1}{2}}^2dt\\
\leq C\int^T_0\mathcal{Y}_\rho dt+\frac{C\delta^{-2}C_*M}{\beta}.
\end{multline}
provided $\beta$ is sufficiently large. Moreover, we have
\begin{equation}\label{est:phis2}
\int^T_0||\vec\phi_{s}||_{\rho,r+\sigma}^2dt+\int^T_0|\partial_y\vec\phi_{s}|_{y=0,1}|_{\rho,r+\sigma}^2dt\leq \frac{C}{\beta}\int^T_0\mathcal{Y}_\rho dt+\frac{C\delta^{-2}C_*M}{\beta^2}.
\end{equation}
Recall that $\vec\phi_{s}=\{\phi_{s,(m)}\}_{m\ge0}$ where for each $m \ge 0$, $\phi_{s,(m)}$ is  the solution of \eqref{eq:phis}.
\end{lemma}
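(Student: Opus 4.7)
The plan is to run a weighted $L^2$ energy estimate on each equation in \eqref{eq:phis} using the multiplier $L_{\rho,m,r+\sigma-\frac{1}{2}}^2\partial_y^2\phi_{s,(m)}$. The decisive payoff of the splitting \eqref{de:phia} is that $\vec\phi_s$ carries both $\phi_{s,(m)}|_{y=0,1}=0$ and $\partial_y^2\phi_{s,(m)}|_{y=0,1}=0$, so integration by parts in $y$ against the dissipative term $\partial_y^4\phi_{s,(m)}$ and against the time derivative produces no boundary residue. Summing over $m$ and using \eqref{factone}, I expect the identity
\begin{equation*}
\frac{1}{2}\frac{d}{dt}\|\partial_y^2\vec\phi_s\|_{\rho,r+\sigma-\frac{1}{2}}^2 + \beta\|\partial_y^2\vec\phi_s\|_{\rho,r+\sigma}^2 + \|\partial_y^3\vec\phi_s\|_{\rho,r+\sigma-\frac{1}{2}}^2 = \sum_i \mathcal{Q}_i,
\end{equation*}
where the $\mathcal{Q}_i$ collect the transport term, the Rayleigh-type term $-(\partial_x\phi_{s,(m)})\partial_y^2u$, the four convolution sums in \eqref{eq:phis}, the magnetic source $\partial_x^m(f\partial_x\partial_yf+g\partial_y^2f)$, and the data-driven pieces $\partial_x^m\partial_y^4\phi_0$ and $\mathcal{R}_{s,(m)}$ from \eqref{def:Rm}.

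Next I would estimate each $\mathcal{Q}_i$ by techniques already deployed in Section \ref{sec:x0}. The transport term $(u\partial_x\partial_y^2\phi_{s,(m)},\partial_y^2\phi_{s,(m)})_{L^2}$ becomes $-\frac12(\partial_xu,(\partial_y^2\phi_{s,(m)})^2)_{L^2}$, bounded by $\|\partial_xu\|_{L^\infty}\|\partial_y^2\vec\phi_s\|_{\rho,r+\sigma-\frac{1}{2}}^2$ using Sobolev embedding and absorbed by the $\beta$-dissipation. The convolution sums involving $(\partial_x^ku)\partial_y^2\phi_{s,(m-k+1)}$, $(\partial_y\phi_{s,(k)})\partial_x^{m-k+1}\partial_yu$, $\phi_{s,(k+1)}\partial_x^{m-k}\partial_y^2u$ and $(\partial_x^kv)\partial_y^3\phi_{s,(m-k)}$ split at $k\leq[m/2]$ versus $k>[m/2]$, are handled by Young's inequality \eqref{young} together with explicit weight-ratio bounds modelled on \eqref{ineq1} and \eqref{ineq3}, yielding products of $\mathcal{X}_\rho^{1/2}$, $\mathcal{Y}_\rho^{1/2}$, $\mathcal{Z}_\rho^{1/2}$ and $\|\partial_y^2\vec\phi_s\|_{\rho,r+\sigma}$, each controllable after time integration by $C\int_0^T\mathcal{Y}_\rho dt$ plus a share of the dissipation. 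The magnetic source is treated exactly as in Lemma \ref{lem:u}, and the data-driven terms $\partial_x^m\partial_y^4\phi_0$ and $\mathcal{R}_{s,(m)}$ contribute bounds of the form $C T\cdot M\leq CM/\beta$ using $T=\beta^{-1}$ and the initial regularity assumption \eqref{conregular}.

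The main obstacle is the Rayleigh-type term $(-(\partial_x\phi_{s,(m)})\partial_y^2u,\partial_y^2\phi_{s,(m)})_{L^2}$, which is the analytical heart of the boundary decomposition method. I would apply Lemma \ref{lem:principle} and Sobolev embedding to obtain $\|\partial_y^2u\|_{L^\infty}\leq C\sqrt{C_*M}$, use Poincar\'e (valid since $\phi_{s,(m)}|_{y=0,1}=0$) to replace $\|\partial_x\phi_{s,(m)}\|_{L^2}$ by $\|\partial_x\partial_y\phi_{s,(m)}\|_{L^2}$, and then interpolate between $\|\partial_y^2\vec\phi_s\|_{\rho,r+\sigma}$ and $\|\partial_y^3\vec\phi_s\|_{\rho,r+\sigma-\frac{1}{2}}$ at the cost of a $\delta^{-1}$ factor. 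Choosing $\beta$ sufficiently large absorbs this contribution into the $\beta$- and $\partial_y^3$-dissipation, leaving a residual $C\delta^{-2}C_*M/\beta$. Integrating in time from $0$ to $T=\beta^{-1}$ with $\phi_{s,(m)}|_{t=0}=0$ then produces \eqref{est:phis}.

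For \eqref{est:phis2}, I would exploit the Dirichlet condition $\phi_{s,(m)}|_{y=0,1}=0$ via two iterations of Poincar\'e's inequality (using Rolle's theorem to locate an interior zero of $\partial_y\phi_{s,(m)}$, permitted because $\partial_y^2\phi_{s,(m)}|_{y=0,1}=0$ forces a zero of $\partial_y\phi_{s,(m)}$) to deduce $\|\phi_{s,(m)}\|_{L^2}\leq C\|\partial_y^2\phi_{s,(m)}\|_{L^2}$, and the standard 1D trace inequality to get $\||\partial_y\phi_{s,(m)}|_{y=0,1}\|_{L^2_x}^2\leq C\|\partial_y^2\phi_{s,(m)}\|_{L^2}\|\partial_y^3\phi_{s,(m)}\|_{L^2}$. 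Applying these at each level $m$ and integrating in time, the $1/\beta$ factor in \eqref{est:phis2} emerges directly from the $\beta$-dissipation term in \eqref{est:phis}, completing the proof.
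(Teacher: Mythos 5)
Your plan diverges from the paper at the decisive step, and the divergence is not a stylistic choice but a genuine gap. You test \eqref{eq:phis} against $L_{\rho,m,r+\sigma-\frac{1}{2}}^2\partial_y^2\phi_{s,(m)}$, i.e.\ the \emph{unweighted} vorticity; the paper tests against $\frac{\partial_y^2\phi_{s,(m)}}{\partial_y^2u}$, the vorticity divided by the convex profile. With the weighted multiplier the Rayleigh term vanishes \emph{identically}:
\begin{equation*}
\inner{(\partial_x\phi_{s,(m)})\partial_y^2u,\ \tfrac{\partial_y^2\phi_{s,(m)}}{\partial_y^2u}}_{L^2}
=\inner{\partial_x\phi_{s,(m)},\ \partial_y^2\phi_{s,(m)}}_{L^2}
=-\inner{\partial_x\partial_y\phi_{s,(m)},\ \partial_y\phi_{s,(m)}}_{L^2}=0,
\end{equation*}
using $\phi_{s,(m)}|_{y=0,1}=0$ and periodicity in $x$. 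That exact cancellation is the entire reason for dividing by $\partial_y^2u$ and is what the convexity assumption buys; the price is a handful of commutator integrals involving $\partial_t\partial_y^2u$, $(u\partial_x+v\partial_y)\partial_y^2u$ and $\partial_y^3u$ divided by $(\partial_y^2u)^2$ (the $P_{2m},P_{3m},P_{4m}$ in the paper), which is where the $\delta^{-2}$ comes from.

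Your alternative treatment of the Rayleigh term cannot close. After Poincar\'e you are left with $\|\partial_x\partial_y\phi_{s,(m)}\|_{L^2}$, which carries a genuine loss of one tangential derivative, and you propose to control it by ``interpolating between $\|\partial_y^2\vec\phi_s\|_{\rho,r+\sigma}$ and $\|\partial_y^3\vec\phi_s\|_{\rho,r+\sigma-\frac{1}{2}}$.'' But those two norms differ only in $y$-regularity (and by half a power of the Gevrey weight); no interpolation between them can recover a raw $\partial_x$ acting on $\phi_{s,(m)}$, since $\phi_{s,(m)}$ is a fixed function of $(x,y)$ at each $m$ and $\partial_x\phi_{s,(m)}\ne\phi_{s,(m+1)}$. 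The $\delta^{-1}$ factor you invoke there also has no source in your argument, since you never introduce the $1/\partial_y^2u$ weight from which it would arise. For \eqref{est:phis2}, the Poincar\'e/trace computation is essentially right, though your justification is slightly off: the interior zero of $\partial_y\phi_{s,(m)}$ comes from Rolle applied to $\phi_{s,(m)}|_{y=0,1}=0$, not from $\partial_y^2\phi_{s,(m)}|_{y=0,1}=0$. The fix for the main gap is to switch to the weighted multiplier, exactly as in Lemma \ref{lem:yu}, and absorb the resulting commutator terms with the extra $\delta^{-1}$ powers.
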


\begin{proof}
The proof of Lemma \ref{lem:phis} closely follows that of Lemma \ref{lem:yu}, utilizing the convexity of $\partial_y^2 u$ to obtain the desired result.

Observing that
\begin{align*}
   \inner{(\partial_x\phi_{s,(m)})\partial_y^2u ,\ \frac{\partial_y^2\phi_{s,(m)}}{\partial_y^2u }}_{L^2}&=\inner{\partial_x\phi_{s,(m)},\ \partial_y^2\phi_{s,(m)}}_{L^2}\\&=- \inner{\partial_x\partial_y\phi_{s,(m)},\ \partial_y\phi_{s,(m)}}_{L^2} =0
\end{align*}
implied by $\phi_{s,(m)}|_{y=0,1}=0$, we take the $L^2$-product with $\frac{\partial_y^2\phi_{s,(m)}}{\partial_y^2u}$ on both sides of \eqref{eq:phis} and use the boundary condition $\partial_y^2\phi_{s,(m)}|_{y=0,1}=0$ to derive
\begin{equation}\label{aq65}
\begin{aligned}
&\frac{1}{2}\frac{d}{dt}\left\|\frac{\partial_y^2\phi_{s,(m)}}{\sqrt{\partial_y^2u }}\right\|_{L^2}^2+\left\|\frac{\partial_y^3\phi _{s,(m)}}{\sqrt{\partial_y^2u }}\right\|_{L^2}^2\\
&=\inner{\mathcal{P}_m,\ \frac{\partial_y^2\phi_{s,(m)}}{\partial_y^2u }}_{L^2}-\frac{1}{2}\int_{\Omega}\frac{(\partial_y^2\phi_{s,(m)})^2\big[u\partial_x\partial_y^2u-(\partial_xu)\partial_y^2u\big]}{(\partial_y^2u )^2}dxdy\\
&\quad-\frac{1}{2}\int_{\Omega}\frac{(\partial_y^2\phi_{s,(m)})^2\partial_t\partial_y^2u }{(\partial_y^2u )^2}dxdy+\int_{\Omega}\frac{(\partial_y^3\phi_{s,(m)})(\partial_y^2\phi_{s,(m)})\partial_y^3u}{(\partial_y^2u )^2}dxdy\\
&\quad+\inner{\partial_x^m(f\partial_x\partial_yf+g\partial_y^2f),\ \frac{\partial_y^2\phi_{s,(m)}}{\partial_y^2u }}_{L^2}+\inner{\partial_x^m\partial_y^4\phi_0+\mathcal{R}_{s,(m)},\ \frac{\partial_y^2\phi_{s,(m)}}{\partial_y^2u }}_{L^2}\\
&\stackrel{\rm def}{=}P_{1m}+P_{2m}+P_{3m}+P_{4m}+P_{5m}+P_{6m}.
\end{aligned}
\end{equation}
Here $\mathcal{P}_m$ is given by
\begin{align*}
    \mathcal{P}_m\stackrel{\rm def}{=}&-\sum^{[\frac{m}{2}]}_{k=1}\binom{m}{k}(\partial_x^ku)\partial_y^2\phi_{s,(m-k+1)}-\sum^m_{k=[\frac{m}{2}]+1}\binom{m}{k}(\partial_y\phi_{s,(k)})\partial_x^{m-k+1}\partial_yu\\
&+\sum^{m-1}_{k=[\frac{m}{2}]+1}\binom{m}{k}\phi_{s,(k+1)}\partial_x^{m-k}\partial_y^2u-\sum^{[\frac{m}{2}]}_{k=0}\binom{m}{k}(\partial_x^kv)\partial_y^3\phi_{s,(m-k)}.
\end{align*}
We then multiply the above equality \eqref{aq65} by $L_{\rho,m,r+\sigma-\frac{1}{2}}^2$, use \eqref{factone} and Proposition \ref{prop:principle}, take summation over $m$ and then integrate on $[0,T]$ with respect to $t$ to get
\begin{multline}\label{est:phis3}
\sup_{t\in[0,T]}||\partial_y^2\vec\phi_{s}||_{\rho,r+\sigma-\frac{1}{2}}^2+\beta\int^T_0||\partial_y^2\vec\phi_{s}||_{\rho,r+\sigma}^2dt
\\+\int^T_0||\partial_y^3\vec\phi_{s}||_{\rho,r+\sigma-\frac{1}{2}}^2dt\leq C\delta^{-1}\sum_{i=1}^6\int^T_0 P_idt.
\end{multline}
Here $P_i$ is defined by
\begin{equation*}
    P_i\stackrel{\rm def}{=}\sum^{+\infty}_{m=0}L_{\rho,m,r+\sigma-\frac{1}{2}}^2|P_{im}|.
\end{equation*}
Now, we estimate $P_1-P_6$ one by one.

\underline{\it The $P_1$, $P_2$, $P_3$ and $P_4$ bounds}. The boundary condition $\phi_{s,(m)}|_{y=0,1}=0$ with Poincar\'e inequality implies that
\begin{equation}\label{ps}
  \forall\ m\ge 0,\quad \norm{\phi_{s,(m)}}_{L^2}\leq C\norm{\partial_y\phi_{s,(m)}}_{L^2}\leq C\norm{\partial_y^2\phi_{s,(m)}}_{L^2}.   
\end{equation}
Then applying the argument used in estimates \eqref{est:T23}-\eqref{est:T6} with minor modifications, we use \eqref{ps} to obtain that
\begin{equation}\label{est:P1-4}
\begin{aligned}
\int^T_0\sum_{i=1}^4P_i dt\leq &C\delta^{-2}C_*^\frac{3}{2}M^\frac{3}{2}\int^T_0\big(||\partial_y^2\vec\phi_{s}||_{\rho,r+\sigma}^2+||\partial_y^2\vec\phi_{s}||_{\rho,r+\sigma}||\partial_y^3\vec\phi_{s}||_{\rho,r+\sigma-\frac{1}{2}}\big)dt\\
&+C\delta^{-2}C_*^\frac{3}{2}M^\frac{3}{2}\int^T_0||\partial_y^2\vec\phi_{s}||_{\rho,r+\sigma}^\frac{1}{2}||\partial_y^3\vec\phi_{s}||_{\rho,r+\sigma-\frac{1}{2}}^\frac{3}{2}dt.  
\end{aligned}
\end{equation}

\underline{The $P_5$ bound}. By the convexity of $\partial_y^2u$ (Proposition \ref{prop:principle}), we write
\begin{equation*}
\begin{aligned}
&\sum^{+\infty}_{m=0}L_{\rho,m,r+\sigma-\frac{1}{2}}^2\left|\inner{\partial_x^m\inner{f\partial_x\partial_yf},\ \frac{\partial_y^2\phi_{s,(m)}}{\partial_y^2u}}_{L^2}\right|\\
&\leq C\delta^{-1}\sum^{+\infty}_{m=0}\sum^{[\frac{m}{2}]}_{k=0}L_{\rho,m,r+\sigma-\frac{1}{2}}^2\binom{m}{k}\norm{\partial_x^kf}_{L^{\infty}}\norm{\partial_x^{m-k+1}\partial_yf}_{L^2}\norm{\partial_y^2\phi_{s,(m)}}_{L^2}\\
&\quad+C\delta^{-1}\sum^{+\infty}_{m=0}\sum^{m}_{k=[\frac{m}{2}]+1}L_{\rho,m,r+\sigma-\frac{1}{2}}^2\binom{m}{k}\norm{\partial_x^kf}_{L^2}\norm{\partial_x^{m-k+1}\partial_yf}_{L^{\infty}}\norm{\partial_y^2\phi_{s,(m)}}_{L^2}.
\end{aligned}
\end{equation*}
Then we use Young's inequality \eqref{young} and the estimate that for any $1\leq\sigma\leq\frac{7}{6}$ and $r\ge 10$,
\begin{equation}\label{ineqkey}
    \left\{
    \begin{aligned}
&\binom{m}{k}\frac{L_{\rho,m,r+\sigma-\frac{1}{2}}^2}{L_{\rho,k+1,r-\sigma+\frac{3}{2}}L_{\rho,m-k+1,r-\sigma+\frac{5}{2}}L_{\rho,m,r+\sigma}}\leq \frac{C}{k+1},\quad \mathrm{if}\ 0\leq k\leq \big[\frac{m}{2}\big],\\
&\binom{m}{k}\frac{L_{\rho,m,r+\sigma-\frac{1}{2}}^2}{L_{\rho,k,r-\sigma+3}L_{\rho,m-k+2,r-\sigma+1}L_{\rho,m,r+\sigma}}\leq \frac{C}{m-k+1},\quad \mathrm{if}\  \big[\frac{m}{2}\big]+1\leq k\leq m,
    \end{aligned}
    \right.
\end{equation}
to deduce 
\begin{equation*}
\sum^{+\infty}_{m=0}L_{\rho,m,r+\sigma-\frac{1}{2}}^2\left|\inner{\partial_x^m\inner{f\partial_x\partial_yf},\ \frac{\partial_y^2\phi_{s,(m)}}{\partial_y^2u}}_{L^2}\right|\leq C\delta^{-1}\mathcal{X}_\rho^\frac{1}{2}\mathcal{Y}_\rho^\frac{1}{2}||\partial_y^2\vec\phi_{s}||_{\rho,r+\sigma},   
\end{equation*}
recalling $\mathcal{X}_\rho$ and $\mathcal{Y}_\rho$ are defined in \eqref{def:energytwo}.
Similarly, we have
\begin{equation*}
\sum^{+\infty}_{m=0}L_{\rho,m,r+\sigma-\frac{1}{2}}^2\left|\inner{\partial_x^m\inner{g\partial_y^2f},\ \frac{\partial_y^2\phi_{s,(m)}}{\partial_y^2u}}_{L^2}\right|\leq C\delta^{-1}\mathcal{X}_\rho^\frac{1}{2}\mathcal{Y}_\rho^\frac{1}{2}||\partial_y^2\vec\phi_{s}||_{\rho,r+\sigma}.    
\end{equation*}
Combining the two estimates above and using assumption \eqref{ass:pri} yield
\begin{equation}\label{est:P5}
    \int^T_0P_5dt\leq C\delta^{-1}\sqrt{C_*M}\int^T_0\mathcal{Y}_\rho^\frac{1}{2}||\partial_y^2\vec\phi_{s}||_{\rho,r+\sigma}dt.     
\end{equation}

\underline{The $P_6$ bound}. Recall $\phi$ and $\mathcal{R}_{s,(m)}$ are defined in \eqref{def:phi} and \eqref{def:Rm}, respectively. A direct computation with assumption \eqref{ass:pri}, \eqref{note:norm} and Lemma \ref{lem:principle} gives for any $t\in[0,T]$
\begin{align*}
&\sum^{+\infty}_{m=0}L_{\rho,m,r+\sigma-1}^2\norm{\partial_x^m\partial_y^4\phi_0+\mathcal{R}_{s,(m)}}_{L^2}^2\\
&\leq C\norm{\partial_y^3u_0}_{2\rho_0,r}^2+CC_*M \norm{(u_0,\partial_yu_0,\partial_y^2u_0)}_{2\rho_0,r}^2  \leq CM+CC_*M^2\leq CC_*^2M^2,
\end{align*}
the last inequality using $C_*\ge 1$ and $M\ge 1$. Consequently, we have, recalling $T=\beta^{-1}$,
\begin{equation}\label{est:P6}
\begin{aligned}
&\int^T_0P_6dt\leq \delta^{-1}\int^T_0 \sum^{+\infty}_{m=0}L_{\rho,m,r+\sigma-\frac{1}{2}}^2\norm{\partial_x^m\partial_y^4\phi_0+\mathcal{R}_{s,(m)}}_{L^2}\norm{\partial_y^2\phi_{s,(m)}}_{L^2}dt\\
&\leq C\delta^{-1}C_*M\int^T_0\norm{\partial_y^2\vec\phi_s}_{\rho,r+\sigma}ds\leq \frac{C\delta^{-1}C_*M}{\beta}+C\delta^{-1}C_*M\int^T_0\norm{\partial_y^2\vec\phi_s}_{\rho,r+\sigma}^2ds.
\end{aligned}
\end{equation}
Finally, substituting \eqref{est:P1-4}, \eqref{est:P5} and \eqref{est:P6} into \eqref{est:phis3} and using Cauchy inequality yield
\begin{align*}
&\sup_{t\in[0,T]}||\partial_y^2\vec\phi_{s}||_{\rho,r+\sigma-\frac{1}{2}}^2+\beta\int^T_0||\partial_y^2\vec\phi_{s}||_{\rho,r+\sigma}^2dt+\int^T_0||\partial_y^3\vec\phi_{s}||_{\rho,r+\sigma-\frac{1}{2}}^2dt\\
&\leq C\delta^{-12}C_*^6M^6\int^T_0||\partial_y^2\vec\phi_{s}||_{\rho,r+\sigma}^2dt+\frac{1}{2}\int^T_0||\partial_y^3\vec\phi_{s}||_{\rho,r+\sigma-\frac{1}{2}}^2dt\\
&\quad+C\int^T_0\mathcal{Y}_\rho dt+\frac{C\delta^{-2}C_*M}{\beta}.
\end{align*}
Then assertion \eqref{est:phis} follows by choosing $\beta\ge 2C\delta^{-12}C_*^6M^6$ in the above inequality. Furthermore, estimate \eqref{ps} with Sobolev embedding inequality gives
\begin{align*}
||\vec\phi_{s}||_{\rho,r+\sigma}+|\partial_y\vec\phi_{s}|_{y=0,1}|_{\rho,r+\sigma}\leq C||\partial_y^2\vec\phi_{s}||_{\rho,r+\sigma}.
\end{align*}
Thus, assertion \eqref{est:phis2} holds by combining the estimate above and assertion \eqref{est:phis}. Lemma \ref{lem:phis} is completed.
\end{proof}

\subsection{The estimate of $\vec{\phi}_{b}$}
To deal with $\vec{\phi}_b$, we use the following decomposition:
\begin{equation}\label{de:phib}
 \vec\phi_b=\{\partial_x^m\phi_{H}\}_{m\ge 0}+\vec\phi_T+\vec\phi_R,
\end{equation}
where $\vec\phi_T\stackrel{\rm def}{=}\{\phi_{T,(m)}\}_{m\ge0}$ and $\vec\phi_R\stackrel{\rm def}{=}\{\phi_{R,(m)}\}_{m\ge0}$.
 We remark that the condition $1\leq \sigma\leq\frac{7}{6}$ can be relaxed to $1\leq\sigma\leq\frac{3}{2}$ in the process of estimating $\vec{\phi}_{b}$. This implies that the key obstacle preventing us from achieving the Gevrey index $\frac{3}{2}$ lies elsewhere. 
 
 We now present the definitions and estimates of $\phi_{H}$, $\vec\phi_T$ and $\vec\phi_R$ one by one and then complete the estimate of $\vec{\phi}_b$.

\subsubsection{The estimate of $\phi_{H}$: Heat equation} 
We define
\begin{equation*}
    \phi_{H}=\phi^0_{H}+\phi^1_{H},
\end{equation*}
where $\phi^i_{H}$ $(i=0,1)$ satisfies the following heat equation:
\begin{equation}\label{eq:phiH}
		\left\{
		\begin{aligned}
&(\partial_t-\partial_y^2)\partial_y^2\phi^i_{H}=0,\quad (x,y)\in \mathbb{T}\times I_i,\\
&\phi^i_{H}|_{y=i}=0,\quad \partial_y\phi^i_{H}|_{y=i}=h^i(t,x),\\
&\phi^i_{H}|_{t=0}=0,
		\end{aligned}
		\right.
\end{equation}
where $I_0=(0,+\infty)$ and $I_1=(-\infty,1)$. Here $(h^0,h^1)$ is a given boundary data which is defined later and satisfies $(h^0(t),h^1(t))=0$ for $t\leq 0$ and $t\ge T$. 

 We will only provide the estimation procedure for $\phi^0_{H}$, as the corresponding analysis for $\phi^1_{H}$ is nearly identical. The details for the latter are therefore left to the reader.

At first, we give zero extension of $\phi^0_{H}$ with $t\leq 0$ such that we can take Fourier transform in $t$. Recall $\phi^0_{H,m}=L_{\rho,m,r}\partial_x^m\phi^0_{H}$. Let $\widehat{\phi^0_{H,m}}=\widehat{\phi^0_{H,m}}(\xi,x,y)$ be the Fourier transform of $\phi^0_{H,m}$ on $t$. Then $\widehat{\phi^0_{H,m}}$ satisfies the ODE:
\begin{equation*}
		\left\{
		\begin{aligned}
&(i\xi+\beta(m+1)-\partial_y^2)\partial_y^2\widehat{\phi^0_{H,m}}=0,\quad (x,y)\in \mathbb{T}\times(0,+\infty),\\
&\widehat{\phi^0_{H,m}}|_{y=0}=0,\quad \partial_y\widehat{\phi^0_{H,m}}|_{y=0}=\widehat{h_m^0}.\\
		\end{aligned}
		\right.
\end{equation*}
Assuming the decay of $\phi^0_{H}$ and $\partial_y\phi^0_{H}$, we obtain the formula:
\begin{align}\label{express:phiH0}
    \widehat{\phi^0_{H,m}}(\xi,x,y)=-\frac{\widehat{h_m^0}}{\sqrt{i\xi+\beta(m+1)}}e^{-y\sqrt{i\xi+\beta(m+1)}}+\frac{\widehat{h_m^0}}{\sqrt{i\xi+\beta(m+1)}},\quad y>0.
\end{align}
where the square root $\sqrt{i\xi+\beta(m+1)}$ is taken so that the real part is positive, and it follows that
\begin{align}\label{6eq9}
    \sqrt{\beta(m+1)}\leq \mathbf{Re}(\sqrt{i\xi+\beta(m+1)})\leq |\sqrt{i\xi+\beta(m+1)}|\leq 2\mathbf{Re}(\sqrt{i\xi+\beta(m+1)}).
\end{align}
 In view of \eqref{express:phiH0}, it is easy to calculate that
\begin{align}\label{6eq10}
    \partial_y \widehat{\phi^0_{H,m}}(\xi,x,y)=\widehat{h_m^0}e^{-y\sqrt{i\xi+\beta(m+1)}},
\end{align}
\begin{align}\label{6eq11}
      \partial_y^2 \widehat{\phi^0_{H,m}}(\xi,x,y)=-\sqrt{i\xi+\beta(m+1)}\widehat{h_m^0}e^{-y\sqrt{i\xi+\beta(m+1)}}.  
\end{align}
The formula \eqref{6eq10} will be used in estimating velocity and \eqref{6eq11} will be used in estimating vorticity. With the same process above, we get the formula for $\widehat{\phi^1_{H,m}}(\xi,x,y)$:
\begin{align}\label{6eq12}
    \widehat{\phi^1_{H,m}}(\xi,x,y)=\frac{\widehat{h_m^1}}{\sqrt{i\xi+\beta(m+1)}}e^{(y-1)\sqrt{i\xi+\beta(m+1)}}-\frac{\widehat{h_m^1}}{\sqrt{i\xi+\beta(m+1)}},\quad y<1.
\end{align}
Correspondingly,
\begin{align}\label{6eq13}
    \partial_y \widehat{\phi^1_{H,m}}(\xi,x,y)=\widehat{h_m^1}e^{(y-1)\sqrt{i\xi+\beta(m+1)}},
\end{align}
\begin{align}\label{6eq14}
      \partial_y^2 \widehat{\phi^1_{H,m}}(\xi,x,y)=\sqrt{i\xi+\beta(m+1)}\widehat{h_m^1}e^{(y-1)\sqrt{i\xi+\beta(m+1)}}. 
\end{align}

\begin{lemma}\label{lem12}
 Let $\phi^i_{H}$ $(i=0,1)$ be the solution of \eqref{eq:phiH}. It holds that for any given $\ell\ge 0$ and $m\ge 0$,
 \begin{align}\label{6eq17}
     \norm{\widehat{\phi^i_{H,m}}}_{L^2_{\xi,y,i}}+ \norm{\widehat{\phi^i_{H,m}}|_{y=1-i}}_{L^2_{\xi}}\leq \frac{C}{\beta^{\frac{1}{2}}(m+1)^{\frac{1}{2}}}\norm{\widehat{h^i_m}}_{L^2_{\xi}},
 \end{align}
  \begin{align}\label{6eq188}
\norm{(\varphi^i)^\ell\partial_y\widehat{\phi^i_{H,m}}}_{L^2_{\xi,y,i}}\leq  \frac{C}{\beta^{\frac{2\ell+1}{4}}(m+1)^{\frac{2\ell+1}{4}}}\norm{\widehat{h^i_m}}_{L^2_{\xi}},
 \end{align}
  \begin{align}\label{6eq20}
 \norm{(\varphi^i)^{\ell+\frac{1}{2}}\partial_y^2\widehat{\phi^i_{H,m}}}_{L^2_{\xi,y,i}}+ \norm{(\varphi^i)^{\ell+\frac{3}{2}}\partial_y^3\widehat{\phi^i_{H,m}}}_{L^2_{\xi,y,i}}\leq  \frac{C}{\beta^{\frac{\ell}{2}}(m+1)^{\frac{\ell}{2}}}\norm{\widehat{h^i_m}}_{L^2_{\xi}},   
\end{align}
    \begin{align}\label{6eq22}
 \norm{\partial_y\widehat{\phi^i_{H,m}}|_{y=1-i}}_{L^2_{\xi}}\leq  \frac{C}{\beta^{10}(m+1)^{10}}\norm{\widehat{h^i_m}}_{L^2_{\xi}},   
 \end{align}
 where $L^2_{\xi,y,i}=L^2_{\xi,y}(\mathbb{R}\times I_i)$ for $i=0,1$ and $\varphi^i=\varphi^i(y)$ $(i=0,1)$ is given by
 \begin{equation}\label{def:varphi}
      \varphi^0(y)=y,\quad \varphi^1(y)=1-y.  
 \end{equation}
\end{lemma}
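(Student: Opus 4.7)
The plan is to reduce every assertion to an explicit one--dimensional integral in $y$ built on the Fourier representations \eqref{express:phiH0}--\eqref{6eq14}, exploiting only the spectral lower bound \eqref{6eq9}. I will treat in detail only the case $i=0$; the case $i=1$ follows by the reflection $y\mapsto 1-y$, which exchanges $\varphi^0$ and $\varphi^1$ in \eqref{def:varphi} and sends the formulas \eqref{6eq12}--\eqref{6eq14} back to \eqref{express:phiH0}--\eqref{6eq11}. Throughout, abbreviate $\lambda=\lambda(\xi,m):=\sqrt{i\xi+\beta(m+1)}$ and $\kappa:=\mathbf{Re}\,\lambda$, so that \eqref{6eq9} gives $\sqrt{\beta(m+1)}\leq \kappa\leq|\lambda|\leq 2\kappa$.

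The only analytic input is the elementary identity
$$\int_0^\infty y^{\alpha}e^{-2y\kappa}\,dy=\frac{\Gamma(\alpha+1)}{(2\kappa)^{\alpha+1}}\leq \frac{C_\alpha}{(\beta(m+1))^{(\alpha+1)/2}}.$$
Inserting \eqref{6eq10} with weight $y^\ell$ and integrating in $\xi$ produces \eqref{6eq188}. The same identity applied to \eqref{6eq11} with weight $y^{\ell+1/2}$ gives, for each $\xi$,
$$\||\lambda|\,\widehat{h^0_m}\cdot y^{\ell+1/2}e^{-y\lambda}\|_{L^2_y(0,\infty)}^2\leq C|\widehat{h^0_m}|^2|\lambda|^2\kappa^{-(2\ell+2)}\leq C|\widehat{h^0_m}|^2\kappa^{-2\ell},$$
where the last step uses $|\lambda|\leq 2\kappa$; integrating in $\xi$ and using $\kappa\ge\sqrt{\beta(m+1)}$ yields the $\partial_y^2$ half of \eqref{6eq20}. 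An identical computation with weight $y^{\ell+3/2}$ and prefactor $|\lambda|^4\leq 16\kappa^4$ yields the $\partial_y^3$ half.

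For the two $\widehat{\phi^0_{H,m}}$-bounds in \eqref{6eq17} I rewrite \eqref{express:phiH0} as $\widehat{\phi^0_{H,m}}=(\widehat{h^0_m}/\lambda)(1-e^{-y\lambda})$ and use $|1-e^{-y\lambda}|\leq 2$, so $|\widehat{\phi^0_{H,m}}|\leq 2|\widehat{h^0_m}|/|\lambda|\leq 2|\widehat{h^0_m}|/\sqrt{\beta(m+1)}$; specializing to $y=1$ delivers the trace part, and the interior part follows from the same pointwise bound. The supergain estimate \eqref{6eq22} is the one step that is not a routine Gamma computation, but still follows from a single observation: by \eqref{6eq10}, $\partial_y\widehat{\phi^0_{H,m}}|_{y=1}=\widehat{h^0_m}\,e^{-\lambda}$, hence $|\partial_y\widehat{\phi^0_{H,m}}|_{y=1}|\leq |\widehat{h^0_m}|\,e^{-\kappa}\leq |\widehat{h^0_m}|\,e^{-\sqrt{\beta(m+1)}}$, and because exponential decay beats every polynomial (concretely $e^{-x}\leq C_{20}\,x^{-20}$ for $x\ge 1$) one has $e^{-\sqrt{\beta(m+1)}}\leq C(\beta(m+1))^{-10}$ once $\beta$ is large enough to ensure $\sqrt{\beta(m+1)}\ge 1$.

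I do not expect a genuine obstacle; everything is driven by the comparability $\kappa\sim |\lambda|$ and the bound $\kappa\ge\sqrt{\beta(m+1)}$, together with a single $\Gamma$-integral and the super-polynomial decay of $e^{-\kappa}$. The only point that requires a little care is bookkeeping the prefactors $\lambda^j$ that appear in \eqref{6eq11} and \eqref{6eq14}; the weights in \eqref{6eq20} are shifted by exactly $1/2$ and $3/2$ precisely so that these extra powers of $y$ can absorb $|\lambda|^j$ into additional powers of $\kappa^{-1}$ via the $\Gamma$-integral.
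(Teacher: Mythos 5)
Your proof follows essentially the same route as the paper: the paper's own argument amounts to "\eqref{6eq17}--\eqref{6eq20} follow from a direct calculation via \eqref{express:phiH0}--\eqref{6eq14}," plus the observation that the factor $\beta^{-10}(m+1)^{-10}$ in \eqref{6eq22} comes from the exponential $e^{-\sqrt{\beta(m+1)}}$ at $y=1-i$. You simply fill in the "direct calculation": the $\Gamma$-integral $\int_0^\infty y^\alpha e^{-2y\kappa}\,dy$, the comparability $\kappa\le|\lambda|\le 2\kappa$ from \eqref{6eq9}, and the super-polynomial decay of $e^{-\kappa}$. Your computations for \eqref{6eq188}, \eqref{6eq20}, \eqref{6eq22}, and for the trace part of \eqref{6eq17}, are all correct and in the spirit of what the paper intended.

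There is, however, one real gap that you inherited from the lemma statement itself, in the interior part of \eqref{6eq17}. You establish the pointwise bound $|\widehat{\phi^0_{H,m}}(\xi,x,y)|\le 2|\widehat{h^0_m}(\xi,x)|/\sqrt{\beta(m+1)}$ and conclude that "the interior part follows from the same pointwise bound." But $L^2_{\xi,y,0}=L^2_{\xi,y}(\mathbb{R}\times I_0)$ with $I_0=(0,+\infty)$ unbounded, and by \eqref{express:phiH0} one has $\widehat{\phi^0_{H,m}}(\xi,x,y)\to\widehat{h^0_m}/\sqrt{i\xi+\beta(m+1)}\ne 0$ as $y\to+\infty$, so $\widehat{\phi^0_{H,m}}(\xi,x,\cdot)\notin L^2(0,\infty)$ at all; a pointwise bound by a nonzero constant cannot produce a finite $L^2$ norm over an infinite interval. (The same issue arises for $i=1$ on $I_1=(-\infty,1)$.) In other words, $\norm{\widehat{\phi^i_{H,m}}}_{L^2_{\xi,y,i}}$ as literally written is $+\infty$ whenever $\widehat{h^i_m}\not\equiv 0$, so neither your argument nor the paper's ``direct calculation'' can establish it. Fortunately this estimate is only invoked later over the bounded strip $\Omega=\mathbb{T}\times(0,1)$ (e.g.\ the $\norm{\vec\phi^i_H}_{\rho,\theta+\sigma}$ in the proof of Lemma \ref{lem:RR}, and the $\norm{\phi_H^i}_{\rho,\theta}$ in Lemma \ref{lem:phib}), where your pointwise bound does give the claimed $L^2$ estimate with $C=2$ because $(0,1)$ has unit measure. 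You should flag explicitly that the interior estimate holds over $(0,1)$ rather than over $I_i$, which is all that is needed downstream; as stated with the norm over $I_i$, \eqref{6eq17} is not recoverable.
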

\begin{proof}
 The proof of \eqref{6eq17}-\eqref{6eq20} follows from a direct calculation via \eqref{express:phiH0}-\eqref{6eq14}; hence, we omit the details. As for the pointwise estimate \eqref{6eq22}, all boundary terms taken at $y=1-i$ contain an exponential factor $e^{-\sqrt{\beta(m+1)}}$ in view of \eqref{6eq9}, which allows to gain an arbitrary number of powers of $\beta(m+1)$, which explains the factor $\beta^{-10}(m+1)^{-10}$.  
\end{proof}

As a direct corollary of Lemma \ref{lem12}, we have the following estimate for $\phi^i_H$ without additional difficulty.
\begin{lemma}\label{lem:phiH}
 Let $\phi^i_{H}$ $(i=0,1)$ be the solution of \eqref{eq:phiH}. It holds that for any given $\theta\in\mathbb{R}$ and $\ell\ge 0$,
\begin{equation*}
\int^T_0\big(\norm{\phi^i_{H}}_{\rho,\theta,I_i}^2+|\phi^i_{H}|_{y=1-i}|_{\rho,\theta}^2\big)dt\leq \frac{C}{\beta}\int^T_0|h^i|_{\rho,\theta-\frac{1}{2}}^2dt,    
\end{equation*}
\begin{equation*}
\int^T_0\norm{(\varphi^i)^{\ell}\partial_y\phi^i_{H}}_{\rho,\theta,I_i}^2dt\leq \frac{C}{\beta^{\frac{2\ell+1}{2}}}\int^T_0|h^i|_{\rho,\theta-\frac{2\ell+1}{4}}^2dt,    
\end{equation*}
\begin{equation*}
\int^T_0\big(\norm{(\varphi^i)^{\ell+\frac{1}{2}}\partial_y^2\phi^i_{H}}_{\rho,\theta,I_i}^2+\norm{(\varphi^i)^{\ell+\frac{3}{2}}\partial_y^3\phi^i_{H}}_{\rho,\theta,I_i}^2\big)dt\leq \frac{C}{\beta^{\ell}}\int^T_0|h^i|_{\rho,\theta-\frac{\ell}{2}}^2dt,  
\end{equation*}
\begin{equation*}
 \int^T_0|\partial_y\phi^i_{H}|_{y=1-i}|_{\rho,\theta}^2dt\leq \frac{C}{\beta^{20}} \int^T_0|h^i|_{\rho,\theta-10}^2dt. 
\end{equation*}
Recall that $\norm{\cdot}_{\rho,\theta,I_i}$ and $\abs{\cdot}_{\rho,\theta}$ are defined in Subsection \ref{subsec:norm} and $\varphi^i$ is given by \eqref{def:varphi}.
\end{lemma}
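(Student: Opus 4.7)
The plan is to derive Lemma \ref{lem:phiH} directly from the mode-by-mode Fourier estimates in Lemma \ref{lem12} by combining Plancherel's theorem in time with the elementary algebraic identity
\begin{equation*}
L_{\rho,m,\theta}\,(m+1)^{-\alpha}=L_{\rho,m,\theta-\alpha},\qquad \forall\,m\ge 0,\ \alpha,\theta\in\mathbb{R},
\end{equation*}
which converts the polynomial gain $(m+1)^{-\alpha}$ obtained on the Fourier side into a shift of the Gevrey index. Since $h^i$ is supported in $[0,T]$, the zero extension of $\phi^i_H$ and $h^i$ to $t\in\mathbb{R}$ is legitimate and Plancherel applies directly. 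Throughout, I would use that the heat operator and the multiplication by $L_{\rho,m,r}$ commute, so that Lemma \ref{lem12} applied to $L_{\rho,m,\theta}\partial_x^m\phi^i_H$ (in place of $\phi^i_{H,m}$) produces the same pointwise-in-$(\xi,m)$ bounds with $\widehat{h^i_m}$ replaced by $L_{\rho,m,\theta}\widehat{\partial_x^m h^i}$.

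For the first estimate, I would compute
\begin{align*}
\int_0^T\!\!\norm{\phi^i_H}_{\rho,\theta,I_i}^2 dt
&=\sum_{m\ge 0}L_{\rho,m,\theta}^2\int_0^T\!\!\norm{\partial_x^m\phi^i_H}_{L^2(\mathbb{T}\times I_i)}^2 dt\\
&=\frac{1}{2\pi}\sum_{m\ge 0}L_{\rho,m,\theta}^2\int_{\mathbb{R}}\norm{\widehat{\partial_x^m\phi^i_H}(\xi,\cdot,\cdot)}_{L^2(\mathbb{T}\times I_i)}^2 d\xi,
\end{align*}
and then apply \eqref{6eq17} in the $\theta$-weighted form to bound the last line by $\frac{C}{\beta}\sum_{m\ge 0}L_{\rho,m,\theta}^2(m+1)^{-1}\int_{\mathbb{R}}\norm{\widehat{\partial_x^m h^i}}_{L^2_x}^2 d\xi/(2\pi)$, which (by the identity above and Plancherel in reverse) equals $\frac{C}{\beta}\int_0^T|h^i|_{\rho,\theta-1/2}^2 dt$. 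The boundary estimate on $\phi^i_H|_{y=1-i}$ is obtained by the same scheme from the second half of \eqref{6eq17}.

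The remaining three estimates are completely analogous: the bound on $(\varphi^i)^\ell\partial_y\phi^i_H$ follows from \eqref{6eq188}, producing a prefactor $\beta^{-(2\ell+1)/2}$ and a weight shift by $(2\ell+1)/4$; the bounds on $(\varphi^i)^{\ell+1/2}\partial_y^2\phi^i_H$ and $(\varphi^i)^{\ell+3/2}\partial_y^3\phi^i_H$ come from \eqref{6eq20}, giving $\beta^{-\ell}$ and a shift of $\ell/2$; the pointwise trace estimate on $\partial_y\phi^i_H|_{y=1-i}$ comes from \eqref{6eq22}, giving $\beta^{-20}$ and a shift of $10$. In each case the proof reduces to multiplying the squared Fourier estimate by $L_{\rho,m,\theta}^2$, integrating in $\xi$, summing in $m$, applying the weight identity to rewrite $L_{\rho,m,\theta}^2(m+1)^{-2\alpha}=L_{\rho,m,\theta-\alpha}^2$, and invoking Plancherel once more to return to the time variable.

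There is no genuine analytic obstacle here; the only points requiring a modicum of care are keeping track of the bookkeeping between the Fourier-side weight $\beta(m+1)$ (which originates in $\partial_t+\beta(m+1)$ from \eqref{factone}) and the $\theta$-index of the target norm, and checking that the weights $(\varphi^i)^\ell$ in Lemma \ref{lem12} have been absorbed exactly as stated so that no interior $y$-weight remains after taking norms. Since each item of Lemma \ref{lem12} already provides the correct split between $\beta^{-\alpha}$ and $(m+1)^{-\alpha}$, the corollary follows without further effort, which is why I would simply present the first case in detail and indicate that the other three are identical.
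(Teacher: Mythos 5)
Your proposal is correct and follows exactly the route the paper has in mind: the paper explicitly introduces Lemma~\ref{lem:phiH} as ``a direct corollary of Lemma~\ref{lem12}'' and offers no further argument, and the combination of Plancherel in $t$, the mode-by-mode Fourier bounds \eqref{6eq17}--\eqref{6eq22}, and the identity $L_{\rho,m,\theta}(m+1)^{-\alpha}=L_{\rho,m,\theta-\alpha}$ is precisely the intended derivation. One small phrasing to tighten: $\phi^i_H$ is extended by zero only for $t\le 0$ (the heat solution does not vanish for $t>T$), so on the left one uses $\int_0^T\le\int_{\mathbb{R}}$ together with the $L^2(\mathbb{R}_t)$-integrability furnished by the exponentially decaying factor $e^{-\beta(m+1)t}$ in $L_{\rho,m,\theta}(t)$, while the support of $h^i$ in $[0,T]$ converts $\int_{\mathbb{R}}$ back to $\int_0^T$ on the right; the direction of the inequality is unaffected.
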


\subsubsection{The estimate of $\vec\phi_{T}$: Vorticity transport estimate} We begin by decomposing $\vec\phi_{T}$ as 
\begin{align*}
    \vec\phi_{T}=\{\phi_{T,(m)}^0\}_{m\ge0}+\{\phi_{T,(m)}^1\}_{m\ge 0}\stackrel{\rm def}{=}\vec\phi_T^0+\vec\phi_T^1,
\end{align*}
where for given $m\ge0$, $\phi_{T,(m)}^i$ $(i=0,1)$ satisfies the following equation in the domain $\mathbb{T}\times I_i$:
\begin{equation}\label{eq:phiT}
		\left\{
		\begin{aligned}
&(\partial_t+u\partial_x-\partial_y^2)\partial_y^2\phi^i_{T,(m)}=-\sum^{[\frac{m}{2}]}_{k=1}\binom{m}{k}(\partial_x^ku)\partial_y^2\phi^i_{T,(m-k+1)}\\
&\qquad-\sum^{[\frac{m}{2}]}_{k=0}\binom{m}{k}(\partial_x^{k}v)\partial_y^3\phi^i_{T,(m-k)}-\sum^m_{k=[\frac{m}{2}]+1}\binom{m}{k}(\partial_y\phi_{T,(k)}^i)\partial_x^{m-k+1}\partial_yu+\mathcal{R}^i_{T,(m)},\\
&\phi^i_{T,(m)}|_{y=i}=0,\quad \partial_y^2\phi^i_{T,(m)}|_{y=i}=0,\\
&\phi^i_{T,(m)}|_{t=0}=0.
		\end{aligned}
		\right.
\end{equation}
Here $\mathcal{\vec R}^i_T=\{\mathcal{R}^i_{T,(m)}\}_{m\ge 0}$ is given by
\begin{equation}\label{def:RTH}
    \begin{aligned}
\mathcal{R}^i_{T,(m)}\stackrel{\rm def}{=} &-\sum^{[\frac{m}{2}]}_{k=0}\binom{m}{k}(\partial_x^ku)\partial_x^{m-k+1}\partial_y^2\phi^i_{H} -\sum^{[\frac{m}{2}]}_{k=0}\binom{m}{k}(\partial_x^{k}v)\partial_x^{m-k}\partial_y^3\phi^i_{H}\\
&-\sum^m_{k=[\frac{m}{2}]+1}\binom{m}{k}(\partial_x^k\partial_y\phi_{H}^i)\partial_x^{m-k+1}\partial_yu.
    \end{aligned}
\end{equation}
We emphasize that we extend $(u,v)$ to $y\in\mathbb{R}$ by zero which means that $(u,v)=(0,0)$ when $y<0$ and $y>1$.

Before estimating $\vec\phi^i_{T}$, we first give the estimate of $\mathcal{\vec R}^i_{T}$ as follows.
\begin{lemma}\label{lem:RiTH}
 Under the same assumption as given in Theorem \ref{thm:pri} with $1\leq \sigma\leq \frac{7}{6}$ relaxed to $1\leq\sigma\leq\frac{3}{2}$, it holds that for given $\theta\in \mathbb{R}$, $i=0,1$ and $j=0,1,2$,
 \begin{equation}\label{est:RTH}
\int^T_0\norm{(\varphi^i)^j\mathcal{\vec R}^i_{T}}_{\rho,\theta,I_i}^2 dt\leq \frac{CC_*M}{\beta^{j+\frac{1}{2}}}\int^T_0|h^i|_{\rho,\theta+\sigma-\frac{2j+1}{4}}^2dt,
 \end{equation}
provided $\beta$ is sufficiently large. Recall that $\varphi^i$  and $\mathcal{\vec R}^i_{T}$ are defined by  \eqref{def:varphi} and \eqref{def:RTH}, respectively.
\end{lemma}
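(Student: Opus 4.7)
\textbf{Proof plan for Lemma~\ref{lem:RiTH}.} I would split the sum defining $\mathcal{R}^i_{T,(m)}$ in \eqref{def:RTH} into three pieces $\mathrm{A}_m,\mathrm{B}_m,\mathrm{C}_m$ (one per line) and treat them in turn. For each piece, the scheme is to apply Young's convolution inequality \eqref{young} to separate the low-derivative factor (one of $u$, $v$, $\partial_y u$) from the high-derivative factor (involving $\phi^i_H$ or $\partial_y u$), using combinatorial bounds of the form
\[
\binom{m}{k}\frac{L_{\rho,m,\theta}}{L_{\rho,k+1,\alpha}\,L_{\rho,m-k+1,\beta}}\leq \frac{C}{k+1},
\]
analogous to \eqref{ineq1}, with $\alpha,\beta$ chosen according to the term. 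These combinatorial identities hold in the enlarged regime $1\leq\sigma\leq\frac{3}{2}$, and the shift $\sigma-\frac{2j+1}{4}$ in the index of $|h^i|$ on the right-hand side originates from this choice of $\alpha,\beta$.

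The key step is producing the $\beta^{-(j+1/2)}$ factor. A direct application of Lemma~\ref{lem:phiH} to $(\varphi^i)^j\partial_y^2\phi^i_H$ (taking $\ell+\frac{1}{2}=j$) would yield only $\beta^{-(j-1/2)}$, and applied to $(\varphi^i)^j\partial_y^3\phi^i_H$ (taking $\ell+\frac{3}{2}=j$) only $\beta^{-(j-3/2)}$. The missing powers are recovered by exploiting the Dirichlet conditions. From $u|_{y=0,1}=0$, a first-order Taylor expansion at the boundary gives $|u(y)|\leq C\,\varphi^i(y)\|\partial_y u\|_{L^\infty_y}$, whence $\|(\varphi^i)^{-1}\partial_x^k u\|_{L^\infty_{x,y}}\leq C\|\partial_x^k\partial_y u\|_{L^\infty_{x,y}}$. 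Moreover, $\partial_y v=-\partial_x u$ together with $u|_{y=0,1}=0$ force $v|_{y=0,1}=\partial_y v|_{y=0,1}=0$, so a second-order Taylor expansion yields $\|(\varphi^i)^{-2}\partial_x^k v\|_{L^\infty_{x,y}}\leq C\|\partial_x^{k+1}\partial_y u\|_{L^\infty_{x,y}}$. Rewriting $(\varphi^i)^j=(\varphi^i)^{-1}(\varphi^i)^{j+1}$ in $\mathrm{A}_m$ and $(\varphi^i)^j=(\varphi^i)^{-2}(\varphi^i)^{j+2}$ in $\mathrm{B}_m$ transfers the boosted power onto the $\phi^i_H$-factor, and Lemma~\ref{lem:phiH} with $\ell=j+\frac{1}{2}$ then produces exactly $\beta^{-\ell}=\beta^{-(j+1/2)}$ through $(\varphi^i)^{\ell+1/2}=(\varphi^i)^{j+1}$ and $(\varphi^i)^{\ell+3/2}=(\varphi^i)^{j+2}$, respectively. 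No such boost is needed for $\mathrm{C}_m$: the high-$x$ derivatives fall on $\phi^i_H$ and the weight $(\varphi^i)^j$ is placed directly on $\partial_y\phi^i_H$, so Lemma~\ref{lem:phiH} with $\ell=j$ gives $\beta^{-(2j+1)/2}=\beta^{-(j+1/2)}$.

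The residual low-order $L^\infty_{x,y}$-norms $\|\partial_x^k\partial_y u\|_{L^\infty}$ and $\|\partial_x^{k+1}\partial_y u\|_{L^\infty}$ are absorbed, uniformly in $t\in[0,T]$, into $C\sqrt{C_*M}$ by Sobolev embedding combined with the a priori assumption \eqref{ass:pri}, Lemma~\ref{lem:principle}, and the auxiliary estimate \eqref{est:dy3u}. Summing over $m$ via \eqref{young} then produces the claimed estimate. The hard part is the Taylor-boost step above: it is the essential ingredient compensating for the mismatch between the natural scaling of Lemma~\ref{lem:phiH} and the target exponent $j+\frac{1}{2}$, and without it one would be off by a factor of $\beta^{-1}$ in $\mathrm{A}_m$ and $\beta^{-2}$ in $\mathrm{B}_m$. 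The combinatorial bookkeeping needed to verify the precise $\sigma$-shift in the $|h^i|$-index is tedious but routine.
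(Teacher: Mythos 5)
Your plan matches the paper's proof step for step: it uses the same three-piece decomposition of $\mathcal{R}^i_{T,(m)}$, the same Hardy-type boost $\|(\varphi^i)^{-1}u\|_{L^\infty_y}\le \|\partial_yu\|_{L^\infty_y}$ and $\|(\varphi^i)^{-2}v\|_{L^\infty_y}\le \tfrac12\|\partial_x\partial_yu\|_{L^\infty_y}$ (valid because $(u,v)$ is extended by zero outside $[0,1]$), the same weight transfer onto $\phi^i_H$ so that Lemma \ref{lem:phiH} applies with $\ell=j+\tfrac12$ for the $u$- and $v$-pieces and $\ell=j$ for the $\partial_y u$-piece, and the same absorption of the low-order factors into $C\sqrt{C_*M}$ via the weighted $\ell^2$ bound drawn from \eqref{ass:pri} and Lemma \ref{lem:principle} before closing with Young's inequality \eqref{young}. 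The only minor inaccuracy is the citation of \eqref{est:dy3u}, which is not needed here (the relevant control on $\partial_y^2u$ is \eqref{est:dy2u}); otherwise this is the paper's argument.
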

\begin{proof}
We only prove assertion \eqref{est:RTH} for the case $i=0$, as the case $i=1$ is analogous and the details are omitted. In the following discussion, we note that $y\in I_0=(0,+\infty)$ in the case $i=0$ and $(u,v)=(0,0)$ for $y\in (1,+\infty)$.

At first, in view of the definition \eqref{def2} of $L_{\rho,m,r}$, it is easy to calculate that for any given $r\ge 10$, $1\leq \sigma\leq \frac{3}{2}$ and $\theta\in\mathbb{R}$,
 \begin{equation}\label{ineq6}
     \left\{
     \begin{aligned}
&\binom{m}{k}\frac{L_{\rho,m,\theta}}{L_{\rho,k+1,r-\sigma}L_{\rho,m-k+1,\theta+\sigma}}\leq \frac{C}{k+1},\quad &&\mathrm{if}\ 0\leq k\leq \big[\frac{m}{2}\big],\\&\binom{m}{k}\frac{L_{\rho,m,\theta}}{L_{\rho,k+2,r-\sigma}L_{\rho,m-k,\theta}}\leq \frac{C}{k+1},\quad &&\mathrm{if}\ 0\leq k\leq \big[\frac{m}{2}\big],\\
&\binom{m}{k}\frac{L_{\rho,m,\theta}}{L_{\rho,k,\theta}L_{\rho,m-k+2,r-\sigma}}\leq \frac{C}{m-k+1},\quad &&\mathrm{if}\  \big[\frac{m}{2}\big]+1\leq k\leq m.
     \end{aligned}
     \right.
 \end{equation}
 On the other hand, observing that
 \begin{equation*}
     \norm{y^{-1}u}_{L_y^\infty}\leq \norm{\partial_yu}_{L_y^\infty}\quad \mathrm{and}\quad\norm{y^{-2}v}_{L_y^\infty}\leq \frac{1}{2}\norm{\partial_x\partial_yu}_{L_y^\infty},
 \end{equation*}
we use assumption \eqref{ass:pri} and Lemma \ref{lem:principle} to conclude that
 \begin{equation}\label{y-1u}
     \sup_{t\in[0,T]}\sum^{+\infty}_{m=0}L_{\rho,m+1,r-\sigma}^2\big(\norm{y^{-1}\partial_x^mu}_{L^{\infty}}^2+\norm{\partial_x^m\partial_yu}_{L^{\infty}}^2\big)\leq CC_*M,
 \end{equation}
 and
  \begin{equation}\label{y-2v}
     \sup_{t\in[0,T]}\sum^{+\infty}_{m=0}L_{\rho,m+2,r-\sigma}^2\norm{y^{-2}\partial_x^mv}_{L^{\infty}}^2\leq CC_*M.
 \end{equation}
 Then, Young's inequality \eqref{young} with \eqref{ineq6}-\eqref{y-2v} and Lemma \ref{lem:phiH} yields
\begin{align*}
&\int^T_0\sum^{+\infty}_{m=0}\Bigg(\sum^{[\frac{m}{2}]}_{k=0}\binom{m}{k}L_{\rho, m,\theta}\norm{y^j(\partial_x^ku)\partial_x^{m-k+1}\partial_y^2\phi^0_{H}}_{L^2}\Bigg)^2dt\\
&\leq C\int^T_0\Bigg(\sum^{+\infty}_{m=0}L_{\rho,m+1,r-\sigma}^2\norm{y^{-1}\partial_x^mu}_{L^{\infty}}^2\Bigg)\Bigg(\sum^{+\infty}_{m=0}L_{\rho,m,\theta+\sigma}^2\norm{y^{j+1}\partial_x^{m}\partial_y^2\phi^0_{H}}_{L^2}^2\Bigg)dt\\
&\leq CC_*M\int^T_0\norm{y^{j+1}\partial_y^2\phi^0_{H}}_{\rho,\theta+\sigma,I_0}^2dt\leq \frac{CC_*M}{\beta^{j+\frac{1}{2}}}\int^T_0|h^0|_{\rho,\theta+\sigma-\frac{2j+1}{4}}^2dt,
\end{align*}
\begin{align*}
&\int^T_0\sum^{+\infty}_{m=0}\inner{\sum^{[\frac{m}{2}]}_{k=0}\binom{m}{k}L_{\rho, m,\theta}\norm{y^j(\partial_x^kv)\partial_x^{m-k}\partial_y^3\phi^0_{H}}_{L^2}}^2dt\\
&\leq C\int^T_0\inner{\sum^{+\infty}_{m=0}L_{\rho,m+2,r-\sigma}^2\norm{y^{-2}\partial_x^mv}_{L^{\infty}}^2}\inner{\sum^{+\infty}_{m=0}L_{\rho,m,\theta}^2\norm{y^{j+2}\partial_x^{m}\partial_y^3\phi^0_{H}}_{L^2}^2}dt\\
&\leq CC_*M\int^T_0\norm{y^{j+2}\partial_y^3\phi^0_{H}}_{\rho,\theta,I_0}^2dt\leq \frac{CC_*M}{\beta^{j+\frac{1}{2}}}\int^T_0|h^0|_{\rho,\theta-\frac{2j+1}{4}}^2dt,
\end{align*}
and
\begin{align*}
&\int^T_0\sum^{+\infty}_{m=0}\Bigg(\sum^{m}_{k=[\frac{m}{2}]+1}\binom{m}{k}L_{\rho, m,\theta}\norm{y^j(\partial_x^k\partial_y\phi^0_{H})\partial_x^{m-k+1}\partial_yu}_{L^2}\Bigg)^2dt\\
&\leq C\int^T_0\Bigg(\sum^{+\infty}_{m=0}L_{\rho,m+1,r-\sigma}^2\norm{\partial_x^m\partial_yu}_{L^{\infty}}^2\Bigg)\Bigg(\sum^{+\infty}_{m=0}L_{\rho,m,\theta}^2\norm{y^{j}\partial_x^{m}\partial_y\phi^0_{H}}_{L^2}^2\Bigg)dt\\
&\leq CC_*M\int^T_0\norm{y^{j}\partial_y\phi^0_{H}}_{\rho,\theta,I_0}^2dt\leq \frac{CC_*M}{\beta^{j+\frac{1}{2}}}\int^T_0|h^0|_{\rho,\theta-\frac{2j+1}{4}}^2dt.
\end{align*}
Combining these estimates above, we have, recalling $\mathcal{R}^0_{T,(m)}$ is given by \eqref{def:RTH},
\begin{equation*}
\int^T_0\norm{y^j\mathcal{\vec R}^0_{T}}_{\rho,\theta,I_0}^2   \leq \frac{CC_*M}{\beta^{j+\frac{1}{2}}}dt\int^T_0|h^0|_{\rho,\theta+\sigma-\frac{2j+1}{4}}^2dt.
\end{equation*}
The case $i=0$ of assertion \eqref{est:RTH} holds, thus completing the proof of Lemma \ref{lem:RiTH}.
\end{proof}

We are in the position to give the estimate of $\vec\phi^i_{T}$.
\begin{lemma}\label{lem:phiT}
Under the same assumption as given in Theorem \ref{thm:pri} with $1\leq \sigma\leq \frac{7}{6}$ relaxed to $1\leq\sigma\leq\frac{3}{2}$, it holds that for given $\theta\in \mathbb{R}$, $i=0,1$ and $j=0,1,2$,
\begin{multline}\label{6eq42}
\sup_{t\in[0,T]}||(\varphi^i)^{j}\partial_y^2\vec\phi^i_{T}||_{\rho,\theta,I_i}^2+\beta\int^T_0||(\varphi^i)^{j}\partial_y^2\vec\phi^i_{T}||_{\rho,\theta+\frac{1}{2},I_i}^2dt\\
+\int^T_0||(\varphi^i)^{j}\partial_y^3\vec\phi^i_{T}||_{\rho,\theta,I_i}^2dt
\leq \frac{CC_*M}{\beta^{\frac{1}{2}}}\int^T_0|h^i|_{\rho,\theta+\sigma-\frac{2j+3}{4}}^2dt, 
\end{multline}
and
\begin{equation}\label{6eq42add}
\begin{aligned}
\sup_{t\in[0,T]}&||(\varphi^i)^{j}\partial_x\partial_y^2\vec\phi^i_{T}||_{\rho,\theta-\sigma,I_i}^2+\beta\int^T_0||(\varphi^i)^{j}\partial_x\partial_y^2\vec\phi^i_{T}||_{\rho,\theta-\sigma+\frac{1}{2},I_i}^2dt\\
&\quad+\int^T_0||(\varphi^i)^{j}\partial_x\partial_y^3\vec\phi^i_{T}||_{\rho,\theta-\sigma,I_i}^2dt\leq \frac{CC_*M}{\beta^{\frac{1}{2}}}\int^T_0|h^i|_{\rho,\theta+\sigma-\frac{2j+3}{4}}^2dt,
\end{aligned}
\end{equation}
provided $\beta$ is sufficiently large. Recall that $\vec\phi^i_{T}=\{\phi^i_{T,(m)}\}_{m\ge0}$ and $\varphi^i$ is defined in \eqref{def:varphi}. Here, for each $m \ge 0$, $\phi^i_{T,(m)}$ is  the solution of \eqref{eq:phiT}.
\end{lemma}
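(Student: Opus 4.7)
The plan is to perform a weighted vorticity-type energy estimate on \eqref{eq:phiT}: at each Gevrey level $m$ we test against $(\varphi^i)^{2j}\partial_y^2\phi^i_{T,(m)}$ on $\mathbb{T}\times I_i$, multiply by $L_{\rho,m,\theta}^2$, sum over $m$, and integrate in $t\in[0,T]$. The vanishing initial data $\phi^i_{T,(m)}|_{t=0}=0$ eliminates the initial energy contribution, while the time derivative combined with \eqref{factone} produces, after summation in $m$, the energy plus the $\beta$-dissipation
\begin{equation*}
\tfrac{1}{2}\tfrac{d}{dt}\|(\varphi^i)^j\partial_y^2\vec\phi^i_T\|_{\rho,\theta,I_i}^2+\beta\|(\varphi^i)^j\partial_y^2\vec\phi^i_T\|_{\rho,\theta+\frac{1}{2},I_i}^2.
\end{equation*}
Integration by parts in $x$ converts the transport contribution into $-\tfrac{1}{2}\int(\partial_xu)(\varphi^i)^{2j}(\partial_y^2\phi^i_{T,(m)})^2\,dxdy$, controlled by the $L^\infty$ bound on $\partial_xu$ from Lemma~\ref{lem:principle} and assumption \eqref{ass:pri}. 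The diffusion term, after two integrations by parts in $y$ using $\partial_y^2\phi^i_{T,(m)}|_{y=i}=0$, produces the dissipation $\|(\varphi^i)^j\partial_y^3\phi^i_{T,(m)}\|_{L^2}^2$ plus a lower-order weight correction $-j(2j-1)\|(\varphi^i)^{j-1}\partial_y^2\phi^i_{T,(m)}\|_{L^2}^2$ coming from the derivative of the weight; this correction, of strictly lower power in $\varphi^i$, is closed by induction on $j\in\{0,1,2\}$ via a weighted Hardy inequality based on $\phi^i_{T,(m)}|_{y=i}=0$ together with the $\beta$-dissipation at the preceding index $j-1$, provided $\beta$ is taken sufficiently large.

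Each of the three nonlinear sums on the right-hand side of \eqref{eq:phiT} is handled by splitting the inner summation at $k=[m/2]$, placing the factor carrying fewer $x$-derivatives in $L^\infty$ and the other in $L^2$. The crucial point is that the powers of $\varphi^i$ present in the test function are absorbed by the weighted $L^\infty$ bounds \eqref{y-1u} and \eqref{y-2v} on $y^{-1}u$, $\partial_yu$ and $y^{-2}v$. Combined with the discrete Young inequality \eqref{young} and coefficient estimates analogous to \eqref{ineq6}, each such commutator contributes at most $\sqrt{C_*M}$ times an energy quantity of the same structure, absorbable into the left-hand side for $\beta$ large. The source term $\mathcal{R}^i_{T,(m)}$ is paired with the dissipation through the identity $L_{\rho,m,\theta}^2=L_{\rho,m,\theta-\frac{1}{2}}L_{\rho,m,\theta+\frac{1}{2}}$, Cauchy--Schwarz in $m$, and Young's inequality, yielding
\begin{equation*}
\tfrac{1}{\beta}\int_0^T\|(\varphi^i)^j\vec{\mathcal R}^i_T\|_{\rho,\theta-\frac{1}{2},I_i}^2\,dt+\tfrac{\beta}{4}\int_0^T\|(\varphi^i)^j\partial_y^2\vec\phi^i_T\|_{\rho,\theta+\frac{1}{2},I_i}^2\,dt.
\end{equation*}
After absorbing the second term, Lemma~\ref{lem:RiTH} applied at Gevrey index $\theta-\tfrac{1}{2}$ controls the first by $\frac{CC_*M}{\beta^{j+3/2}}\int_0^T|h^i|_{\rho,\theta+\sigma-(2j+3)/4}^2\,dt$, which for $\beta\ge 1$ and $j\ge 0$ is majorized by $\frac{CC_*M}{\beta^{1/2}}\int_0^T|h^i|_{\rho,\theta+\sigma-(2j+3)/4}^2\,dt$, matching exactly the target right-hand side of \eqref{6eq42}.

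For \eqref{6eq42add}, the plan is to differentiate \eqref{eq:phiT} once in $x$ and repeat the same procedure, this time testing against $(\varphi^i)^{2j}\partial_x\partial_y^2\phi^i_{T,(m)}$ with Gevrey weight $L_{\rho,m,\theta-\sigma}^2$; the index shift by $-\sigma$ absorbs the additional $x$-derivative. The new commutator $(\partial_xu)\partial_x\partial_y^2\phi^i_{T,(m)}$ is handled identically to the original transport term. The main obstacle throughout is the careful bookkeeping of the weights $(\varphi^i)^j$ together with the Gevrey indices: one must verify that (i) every commutator and every lower-order diffusion contribution closes through the discrete Young inequality and the $\beta$-dissipation, (ii) the inductive closure in $j$ of the weighted Hardy correction is consistent across $j\in\{0,1,2\}$, and (iii) the Cauchy--Young pairing of the source $\vec{\mathcal R}^i_T$ with the dissipation produces precisely the factor $\beta^{-1/2}$ and the $|h^i|$-index $\theta+\sigma-(2j+3)/4$ appearing on the right-hand side.
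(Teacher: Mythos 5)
Your overall plan matches the paper's: a weighted vorticity estimate on each $\phi^i_{T,(m)}$ with test function $(\varphi^i)^{2j}\partial_y^2\phi^i_{T,(m)}$, Gevrey summation via \eqref{factone}, the discrete Young inequality for the commutators, the weighted $L^\infty$ bounds \eqref{y-1u}--\eqref{y-2v}, and Lemma~\ref{lem:RiTH} for the source term, with $\beta$ large closing the estimate. The identity $L_{\rho,m,\theta}^2 = L_{\rho,m,\theta-\frac12}L_{\rho,m,\theta+\frac12}$ and your Cauchy--Young pairing of $\mathcal{\vec R}^i_T$ with the dissipation are both sound and give a $\beta$-power at least as good as the paper's.

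There is, however, a genuine gap in how you close the estimate at fixed $j$. Your assertion that ``each such commutator contributes at most $\sqrt{C_*M}$ times an energy quantity of the same structure, absorbable into the left-hand side'' is false for the $(\partial_y\phi^i_{T,(k)})\,\partial_x^{m-k+1}\partial_yu$ commutator when $j=0$. There one must control $\|\partial_y\phi^i_{T,(k)}\|_{L^2(I_i)}$, which cannot be reached from the $j=0$ dissipation: the only route to the vorticity level is a Hardy inequality at infinity on the half-line (not the boundary Hardy coming from $\phi^i_{T,(m)}|_{y=i}=0$ that you cite), and this produces $\|\varphi^i\partial_y^2\phi^i_{T,(k)}\|_{L^2(I_i)}$, which carries one extra power of $\varphi^i$ and is \emph{not} absorbable into the $j=0$ left-hand side. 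Thus the $j=0$ inequality depends on the $j=1$ dissipation, in addition to the downward dependency $j\to j-1$ from the weight correction $-j(2j-1)\|(\varphi^i)^{j-1}\partial_y^2\phi^i_{T,(m)}\|^2$ that your one-directional induction does handle. The paper resolves this by applying the $j$-th estimate at the shifted Gevrey index $\theta+j/2$ and \emph{summing} all three inequalities: after the shift, each leftover cross term lands precisely under the $\beta$-dissipation of a neighbouring $j'$-level, all three source contributions collapse to the common index $\theta+\sigma-\tfrac34$, and choosing $\beta\gg C_*M$ absorbs the couplings; the individual statements in \eqref{6eq42} are then recovered by renaming $\theta$. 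Without this simultaneous bookkeeping across $j=0,1,2$ your separate-$j$ closure does not go through.
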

\begin{proof}
We prove only assertion \eqref{6eq42}, as assertion \eqref{6eq42add} can be obtained similarly. Furthermore, it suffices to consider the case $i=0$ in \eqref{6eq42}. In this case,  we note that $y\in I_0=(0,+\infty)$ and $(u,v)=(0,0)$ for $y\in (1,+\infty)$.

For $j=0,1,2$, we take the $L^2$-product with $y^{2j}\partial_y^2\phi^0_{T,(m)}$ on both sides of \eqref{eq:phiT}, multiply by $L_{\rho,m,\theta}^2$, use the fact \eqref{factone}, take summation over $m\in\mathbb{Z}_+$ and then integrate over $[0,T]$ with respect to time to derive that
\begin{equation}\label{6eq44}
    \begin{aligned}
&\sup_{t\in[0,T]}||y^j\partial_y^2\vec\phi^0_{T}||_{\rho,\theta,I_0}^2+\beta\int^T_0||y^j\partial_y^2\vec\phi^0_{T}||_{\rho,\theta+\frac{1}{2},I_0}^2dt\\&\quad+\int^T_0||y^j\partial_y^3\vec\phi^0_{T}||_{\rho,\theta,I_0}^2dt-j(2j-1)\int^T_0||y^{j-1}\partial_y^2\vec\phi^0_{T}||_{\rho,\theta,I_0}^2dt\leq \sum_{k=1}^5N_{k,j},        
    \end{aligned}
\end{equation}
where
	\begin{equation*}
		\left\{
		\begin{aligned}
N_{1,j}&=\int^T_0\sum^{+\infty}_{m=0}L^2_{\rho,m,\theta}\norm{\partial_xu}_{L^{\infty}}\norm{y^j\partial_y^2\phi^0_{T,(m)}}_{L^2}^2dt,\\
N_{2,j}&=\int^T_0\sum_{m=0}^{+\infty}L^2_{\rho,m,\theta}\norm{y^j\mathcal{R}^0_{T,(m)}}_{L^2}\norm{y^j\partial_y^2\phi^0_{T,(m)}}_{L^2}dt,\\
N_{3,j}&=\int^T_0\sum^{+\infty}_{m=0}\sum^{[\frac{m}{2}]}_{k=1}\binom{m}{k}L^2_{\rho,m,\theta}\norm{y^j(\partial_x^ku)\partial_y^2\phi^0_{T,(m-k+1)}}_{L^2}\norm{y^j\partial_y^2\phi^0_{T,(m)}}_{L^2}dt,\\
N_{4,j}&=\int^T_0\sum^{+\infty}_{m=0}\sum^{m}_{k=[\frac{m}{2}]+1}\binom{m}{k}L_{\rho,m,\theta}^2\norm{y^j(\partial_y\phi^0_{T,(k)})\partial_x^{m-k+1}\partial_yu}_{L^2}\norm{y^j\partial_y^2\phi^0_{T,(m)}}_{L^2}dt,\\
N_{5,j}&=\int^T_0\sum^{+\infty}_{m=0}\sum^{[\frac{m}{2}]}_{k=0}\binom{m}{k}L^2_{\rho,m,\theta}\norm{y^j(\partial_x^{k}v)\partial_y^3\phi^0_{T,(m-k)}}_{L^2}\norm{y^j\partial_y^2\phi^0_{T,(m)}}_{L^2}dt.
		\end{aligned}
		\right.
	\end{equation*} 
We now estimate the terms $N_{1,j}-N_{5,j}$ one by one.

\underline{\it The $N_{1,j}$ bound}. It follows from assumption \eqref{ass:pri} that
\begin{equation*}
  N_{1,j}\leq   C\sqrt{C_*M}\int^T_0||y^j\partial_y^2\vec\phi^0_{T}||_{\rho,\theta,I_0}^2dt\\
\leq C\sqrt{C_*M}\int^T_0||y^j\partial_y^2\vec\phi^0_{T}||_{\rho,\theta+\frac{1}{2},I_0}^2dt.    
\end{equation*}

\underline{\it The $N_{2,j}$ bound}. Applying Lemma \ref{lem:RiTH}, we have
 \begin{multline*}
     N_{2,j}\leq C\int^T_0\norm{y^j\mathcal{\vec R}^0_{T}}_{\rho,\theta-\frac{1}{2},I_0}^2  dt+C\int^T_0||y^j\partial_y^2\vec\phi^0_{T}||_{\rho,\theta+\frac{1}{2},I_0}^2dt\\
     \leq \frac{CC_*M}{\beta^{j+\frac{1}{2}}}\int^T_0|h^0|_{\rho,\theta+\sigma-\frac{2j+3}{4}}^2dt+C\int^T_0||y^j\partial_y^2\vec\phi^0_{T}||_{\rho,\theta+\frac{1}{2},I_0}^2dt.
 \end{multline*}

\underline{\it The $N_{3,j}$ bound}. Repeating a similar argument in \eqref{est:I1}, we use Young's inequality \eqref{young} and the fact that for any given $\theta\in\mathbb{R}$,
\begin{equation*}
    \binom{m}{k}\frac{L_{\rho,m,\theta}^2}{L_{\rho,k+1,r}L_{\rho,m-k+1,\theta+\frac{1}{2}}L_{\rho,m,\theta+\frac{1}{2}}}\leq \frac{C}{k+1},\quad \mathrm{if}\  \ 1\leq k\leq \big[\frac{m}{2}\big],
\end{equation*}
to conclude that
\begin{multline*}
    N_{3,j}\leq C\int^T_0\Big(\sum_{m=0}^{+\infty}L_{\rho,m+1,r}^2\norm{\partial_x^mu}_{L^\infty}^2\Big)^\frac{1}{2}||y^j\partial_y^2\vec\phi^0_{T}||_{\rho,\theta+\frac{1}{2},I_0}^2dt\\
    \leq C\sqrt{C_*M}\int^T_0||y^j\partial_y^2\vec\phi^0_{T}||_{\rho,\theta+\frac{1}{2},I_0}^2dt, 
\end{multline*}
the last inequality using assumption \eqref{ass:pri}.

\underline{\it The $N_{4,j}$ bound}. Note that
\begin{equation*}
\binom{m}{k}\frac{L_{\rho,m,\theta}^2}{L_{\rho,k,\theta-\frac{1}{2}}L_{\rho,m-k+2,r-\sigma}L_{\rho,m,\theta+\frac{1}{2}}}\leq \frac{C}{m-k+1},\quad \mathrm{if}\ \ \big[\frac{m}{2}\big]+1\leq k\leq m.   
\end{equation*}
We use Young's inequality \eqref{young} along with assumption \eqref{ass:pri} and Lemma \ref{lem:principle} to calculate
\begin{equation*}
    \begin{aligned}
N_{4,0}&\leq C\int^T_0\Big(\sum_{m=0}^{+\infty}L_{\rho,m+1,r-\sigma}^2\norm{\partial_x^m\partial_yu}_{L^\infty}^2\Big)^\frac{1}{2}||\partial_y\vec\phi^0_{T}||_{\rho,\theta-\frac{1}{2},I_0}||\partial_y^2\vec\phi^0_{T}||_{\rho,\theta+\frac{1}{2},I_0}dt\\
&\leq C\sqrt{C_*M}\int^T_0||y\partial_y^2\vec\phi^0_{T}||_{\rho,\theta-\frac{1}{2},I_0}||\partial_y^2\vec\phi^0_{T}||_{\rho,\theta+\frac{1}{2},I_0}dt\\
&\leq C\int^T_0||y\partial_y^2\vec\phi^0_{T}||_{\rho,\theta-\frac{1}{2},I_0}^2dt+CC_*M\int^T_0||\partial_y^2\vec\phi^0_{T}||_{\rho,\theta+\frac{1}{2},I_0}^2dt,
    \end{aligned}
\end{equation*}
the second inequality using Hardy inequality. For $j=1,2$, a similar computation with the fact $u=0$ for $y\in (1,+\infty)$ gives
\begin{equation*}
    \begin{aligned}
N_{4,j}&\leq  \int \Big[\sum_{m=0}^{+\infty}L_{\rho,m+1,r-\sigma}^2\norm{\partial_x^m\partial_yu}_{L^\infty}^2\Big]^\frac{1}{2}||y^{j-1}\partial_y\vec\phi^0_{T}||_{\rho,\theta-\frac{1}{2},I_0}||y^j\partial_y^2\vec\phi^0_{T}||_{\rho,\theta+\frac{1}{2},I_0}dt\\
&\leq C\sqrt{C_*M}\int^T_0||y^j\partial_y^2\vec\phi^0_{T}||_{\rho,\theta+\frac{1}{2},I_0}^2dt.
    \end{aligned}
\end{equation*}

\underline{\it The $N_{5,j}$ bound}. By the estimate that
\begin{equation*}
    \binom{m}{k}\frac{L_{\rho,m,\theta}^2}{L_{\rho,k+2,r}L_{\rho,m-k,\theta}L_{\rho,m,\theta+\frac{1}{2}}}\leq \frac{C}{k+1},\quad \mathrm{if}\ \ 0\leq k\leq \big[\frac{m}{2}\big],
\end{equation*}
we use Young's inequality \eqref{young} and assumption \eqref{ass:pri} to conclude
\begin{equation*}
    \begin{aligned}
N_{5,j}&\leq C\int^T_0\Big(\sum_{m=0}^{+\infty}L_{\rho,m+2,r}^2\norm{\partial_x^mv}_{L^\infty}^2\Big)^\frac{1}{2}||y^{j}\partial_y^3\vec\phi^0_{T}||_{\rho,\theta,I_0}||y^j\partial_y^2\vec\phi^0_{T}||_{\rho,\theta+\frac{1}{2},I_0}dt\\
&\leq C\sqrt{C_*M}\int^T_0||y^j\partial_y^3\vec\phi^0_{T}||_{\rho,\theta,I_0}||y^j\partial_y^2\vec\phi^0_{T}||_{\rho,\theta+\frac{1}{2},I_0}dt\\
&\leq CC_*M\int^T_0||y^j\partial_y^2\vec\phi^0_{T}||_{\rho,\theta+\frac{1}{2},I_0}^2dt+\frac{1}{4}\int^T_0||y^j\partial_y^3\vec\phi^0_{T}||_{\rho,\theta,I_0}^2dt.
    \end{aligned}
\end{equation*}

Substituting the above estimates of $N_{1,j}-N_{5,j}$ into \eqref{6eq44} and using $C_*M\ge 1$ yield that 
\begin{align*}
&\sup_{t\in[0,T]}||\partial_y^2\vec\phi^0_{T}||_{\rho,\theta,I_0}^2+(\beta-CC_*M)\int^T_0||\partial_y^2\vec\phi^0_{T}||_{\rho,\theta+\frac{1}{2},I_0}^2dt+\frac{3}{4}\int^T_0||\partial_y^3\vec\phi^0_{T}||_{\rho,\theta,I_0}^2dt\\
&\leq \frac{CC_*M}{\beta^{j+\frac{1}{2}}}\int^T_0|h^0|_{\rho,\theta+\sigma-\frac{2j+3}{4}}^2dt+ C\int^T_0||y\partial_y^2\vec\phi^0_{T}||_{\rho,\theta-\frac{1}{2},I_0}^2dt,
\end{align*}
and for $j=1,2$,
\begin{align*}
&\sup_{t\in[0,T]}||y^j\partial_y^2\vec\phi^0_{T}||_{\rho,\theta,I_0}^2+(\beta-CC_*M)\int^T_0||y^j\partial_y^2\vec\phi^0_{T}||_{\rho,\theta+\frac{1}{2},I_0}^2dt\\
&\quad+\frac{3}{4}\int^T_0||y^j\partial_y^3\vec\phi^0_{T}||_{\rho,\theta,I_0}^2dt\\
&\leq \frac{CC_*M}{\beta^{j+\frac{1}{2}}}\int^T_0|h^0|_{\rho,\theta+\sigma-\frac{2j+3}{4}}^2dt+ C\int^T_0||y^{j-1}\partial_y^2\vec\phi^0_{T}||_{\rho,\theta,I_0}^2dt.
\end{align*}
Consequently, combining these estimates, we choose $\beta\gg C_*M$ such that for any $\theta\in\mathbb{R}$,
\begin{multline*}
\sup_{t\in[0,T]}\sum_{j=0}^2||y^{j}\partial_y^2\vec\phi^0_{T}||_{\rho,\theta+\frac{j}{2},I_0}^2+\beta\int^T_0\sum_{j=0}^2||y^{j}\partial_y^2\vec\phi^0_{T}||_{\rho,\theta+\frac{j+1}{2},I_0}^2dt\\
+\int^T_0\sum_{j=0}^2||y^{j}\partial_y^3\vec\phi^0_{T}||_{\rho,\theta+\frac{j}{2},I_0}^2dt\leq \frac{CC_*M}{\beta^{\frac{1}{2}}}\int^T_0|h^0|_{\rho,\theta+\sigma-\frac{3}{4}}^2dt. 
\end{multline*}
This implies that assertion \eqref{6eq42} holds for $i=0$ and thus completes the proof of Lemma \ref{lem:phiT}.
\end{proof}

From the estimate of $\partial_y^2\vec\phi^i_T$ in Lemma \ref{lem:phiT}, we obtain the estimate of $\vec\phi^i_T$.
\begin{lemma}\label{lem:phiTtwo}
Under the same assumption as given in Theorem \ref{thm:pri} with $1\leq \sigma\leq \frac{7}{6}$ relaxed to $1\leq\sigma\leq\frac{3}{2}$, it holds that for given $\theta\in \mathbb{R}$, $i=0,1$ and $j=0,1,2$,
\begin{align}\label{6eq55}
\int^T_0||\partial_y\vec\phi^i_{T}||_{\rho,\theta,I_i}^2dt\leq \frac{CC_*M}{\beta^{\frac{1}{2}}}\int^T_0|h^i|_{\rho,\theta+\sigma-\frac{7}{4}}^2dt,  
\end{align}
\begin{multline}\label{add6eq55}
\int^T_0\big(||\vec\phi^i_{T}||_{\rho,\theta,I_i}^2+||\varphi^i\partial_y\vec\phi^i_{T}||_{\rho,\theta,I_i}^2+||\partial_x\vec\phi^i_{T}||_{\rho,\theta-\sigma,I_i}^2\big)dt\\
\leq \frac{CC_*M}{\beta^{\frac{1}{2}}}\int^T_0|h^i|_{\rho,\theta+\sigma-\frac{9}{4}}^2dt,  
\end{multline}
\begin{align}\label{6eq57}
    \int^T_0\big(|\vec\phi^i_{T}|_{y=1-i}|_{\rho,\theta}^2+|\partial_x\vec\phi^i_{T}|_{y=1-i}|_{\rho,\theta-\sigma}^2\big)dt\leq \frac{CC_*M}{\beta^{\frac{1}{2}}}\int^T_0|h^i|_{\rho,\theta+\sigma-2}^2dt,    
\end{align}
and 
\begin{align}\label{6eq56} \int^T_0|\partial_y\vec\phi^i_{T}|_{y=0,1}|_{\rho,\theta}^2dt\leq \frac{CC_*M}{\beta^{\frac{1}{2}}}\int^T_0|h^i|_{\rho,\theta+\sigma-\frac{3}{2}}^2dt,  
\end{align}
provided $\beta$ is sufficiently large. Recall that $\vec\phi^i_{T}=\{\phi^i_{T,(m)}\}_{m\ge0}$ and $\varphi^i$ is defined in \eqref{def:varphi}. Here, for each $m \ge 0$, $\phi^i_{T,(m)}$ is  the solution of \eqref{eq:phiT}.
\end{lemma}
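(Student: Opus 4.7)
The plan is to bootstrap Lemma~\ref{lem:phiT}---which controls $\partial_y^2\vec\phi^i_T$ and $\partial_x\partial_y^2\vec\phi^i_T$ in weighted $L^2(\mathbb{T}\times I_i)$ with weights $(\varphi^i)^j$, $j=0,1,2$---down to $\vec\phi^i_T$, $\partial_y\vec\phi^i_T$, $\partial_x\vec\phi^i_T$ and their traces, via one-dimensional Hardy-type inequalities on $I_i$ exploiting $\phi^i_{T,(m)}|_{y=i}=0$ together with the decay of $\phi^i_{T,(m)}$ at the unbounded endpoint of $I_i$. The decay is inherited from the structure of~\eqref{eq:phiT}: for $y$ outside $(0,1)$ one has $(u,v)=0$, while the source $\mathcal R^i_{T,(m)}$ decays exponentially (since $\phi^i_H$ does, by~\eqref{express:phiH0}--\eqref{6eq14}), so the equation reduces to a pure heat problem whose solutions inherit exponential decay. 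All boundary contributions at the unbounded endpoint will therefore be treated as zero in the integrations by parts below.

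For~\eqref{6eq55}: integration by parts against $\phi^i_{T,(m)}$ combined with the Hardy inequality $\|\phi^i_{T,(m)}/\varphi^i\|_{L^2(I_i)}\leq 2\|\partial_y\phi^i_{T,(m)}\|_{L^2(I_i)}$ yields the $m$-wise bound $\|\partial_y\phi^i_{T,(m)}\|_{L^2(I_i)}\leq 2\|\varphi^i\partial_y^2\phi^i_{T,(m)}\|_{L^2(I_i)}$. Multiplying by $L_{\rho,m,\theta}^2$, summing over $m$, and invoking~\eqref{6eq42} with $j=1$ and $\theta\to\theta-\tfrac12$ yields~\eqref{6eq55}.

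For~\eqref{add6eq55}: an IBP of $\int\phi^2\,dy=-2\int\varphi^i\phi\,\partial_y\phi\,dy$ (valid since $\phi|_{y=i}=0$ and $y\phi^2\to 0$ at the unbounded endpoint) gives $\|\phi^i_{T,(m)}\|_{L^2(I_i)}\leq 2\|\varphi^i\partial_y\phi^i_{T,(m)}\|_{L^2(I_i)}$, while representing $\partial_y\phi^i_{T,(m)}$ as the integral of $\partial_y^2\phi^i_{T,(m)}$ from $y$ to the unbounded endpoint of $I_i$, followed by Cauchy--Schwarz with weight $s^{-1}$ and Fubini, gives $\|\varphi^i\partial_y\phi^i_{T,(m)}\|_{L^2(I_i)}\leq C\|(\varphi^i)^2\partial_y^2\phi^i_{T,(m)}\|_{L^2(I_i)}$. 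Summing and plugging~\eqref{6eq42} at $j=2$ with $\theta\to\theta-1$ controls the first two summands; the same chain applied to $\partial_x\phi^i_{T,(m)}$ (using~\eqref{6eq42add} in place of~\eqref{6eq42}) gives the third summand with the $\theta-\sigma$ index that reflects the extra $\partial_x$.

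The trace estimates follow from the one-dimensional inequality $|h(y_*)|_{L^2_x}^2\leq 2\|h\|_{L^2(I_i)}\|\partial_y h\|_{L^2(I_i)}$. Writing $L_{\rho,m,\theta}^2=L_{\rho,m,\theta+1/4}L_{\rho,m,\theta-1/4}$ and applying Cauchy--Schwarz in $m$ upgrades this to $|\vec h|_{y=y_*}|_{\rho,\theta}^2\leq 2\|\vec h\|_{\rho,\theta+1/4}\|\partial_y\vec h\|_{\rho,\theta-1/4}$. Integrating in $t$, Cauchy--Schwarz in $t$, and plugging~\eqref{add6eq55} at $\theta+\tfrac14$ together with~\eqref{6eq55} at $\theta-\tfrac14$ (applied to $\vec h=\vec\phi^i_T$, resp.\ $\vec h=\partial_x\vec\phi^i_T$) produce~\eqref{6eq57}; the sharp midpoint choice $\theta\pm\tfrac14$ is exactly what generates the index $\sigma-2=\tfrac12[(\sigma-\tfrac94)+(\sigma-\tfrac74)]$ on the right-hand side. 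Estimate~\eqref{6eq56} is obtained the same way with $\vec h=\partial_y\vec\phi^i_T$, using~\eqref{6eq55} and~\eqref{6eq42} at $j=0$ and again picking the sharp midpoint shift. The main technical obstacle throughout is the careful bookkeeping of $\theta$-shifts and of the $\beta^{-j}$ gains from each weight $(\varphi^i)^j$ in Lemmas~\ref{lem:RiTH}--\ref{lem:phiT}, so that the specific losses $\sigma-\tfrac74$, $\sigma-\tfrac94$, $\sigma-2$, $\sigma-\tfrac32$ on the right-hand sides come out exactly as stated.
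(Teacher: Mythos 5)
Your proof is correct and takes essentially the same route as the paper: Hardy-type inequalities on $I_i$ (using the boundary condition $\phi^i_{T,(m)}|_{y=i}=0$ and the decay at the unbounded endpoint) to pass from the weighted vorticity bounds of Lemma~\ref{lem:phiT} down to $\vec\phi^i_T$, $\partial_y\vec\phi^i_T$, $\partial_x\vec\phi^i_T$, followed by $L^2$--$H^1$ interpolation for the traces. The only noteworthy variation is in~\eqref{6eq57}: the paper controls $\sup_y|\phi^0_{T,(m)}|$ via an explicit double-integral representation split at $y=(1+m)^{-1/2}$, whereas you chain through~\eqref{6eq55} and~\eqref{add6eq55} and invoke the trace inequality $|h(y_*)|^2\lesssim\|h\|_{L^2}\|\partial_y h\|_{L^2}$ with the factorization $L_{\rho,m,\theta}^2=L_{\rho,m,\theta+\frac14}L_{\rho,m,\theta-\frac14}$; both implement the same interpolation and give identical indices.
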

\begin{proof}
For brevity, we only prove the case $i=0$; the proof for $i=1$ is similar and thus omitted.

Observing $\phi^0_{T,(m)}\big|_{y=0}=0$ for $m\ge 0$, we integrate by parts and use Hardy inequality to conclude
\begin{multline*}
\norm{\partial_y\phi^0_{T,(m)}}_{L^2}^2=-\inner{\partial_y^2\phi^0_{T,(m)},\ \phi^0_{T,(m)}}_{L^2}\\
\leq \norm{y\partial_y^2\phi^0_{T,(m)}}_{L^2}\norm{y^{-1}\phi^0_{T,(m)}}_{L^2}\leq C\norm{y\partial_y^2\phi^0_{T,(m)}}_{L^2}\norm{\partial_y\phi^0_{T,(m)}}_{L^2},
\end{multline*}
which implies that
\begin{align*}
 \norm{\partial_y\phi^0_{T,(m)}}_{L^2}\leq C\norm{y\partial_y^2\phi^0_{T,(m)}}_{L^2}.
\end{align*}
As a result, by Lemma \ref{lem:phiT}, we have
\begin{equation*}
\int^T_0||\partial_y\vec\phi^0_{T}||_{\rho,\theta,I_0}^2dt\leq\int^T_0||y\partial_y^2\vec\phi^0_{T}||_{\rho,\theta,I_0}^2dt\leq \frac{CC_*M}{\beta^{\frac{3}{2}}}\int^T_0|h^0|_{\rho,\theta+\sigma-\frac{7}{4}}^2dt.
\end{equation*}
This with $\beta\ge 1$ gives assertion \eqref{6eq55} for $i=0$.

Moreover, using Hardy inequality and Lemma \ref{lem:phiT} again yields
\begin{multline*}
\int^T_0\big(||\vec\phi^0_{T}||_{\rho,\theta,I_0}^2+||y\partial_y\vec\phi^0_{T}||_{\rho,\theta,I_0}^2+||\partial_x\vec\phi^0_{T}||_{\rho,\theta-\sigma,I_0}^2\big)dt\\
\leq C\int^T_0\big(||y^2\partial_y^2\vec\phi^0_{T}||_{\rho,\theta,I_0}^2+||y^2\partial_x\partial_y^2\vec\phi^0_{T}||_{\rho,\theta-\sigma,I_0}^2\big)dt\\
\leq \frac{CC_*M}{\beta^{\frac{3}{2}}}\int^T_0|h^0|_{\rho,\theta+\sigma-\frac{9}{4}}^2dt\leq \frac{CC_*M}{\beta^{\frac{1}{2}}}\int^T_0|h^0|_{\rho,\theta+\sigma-\frac{9}{4}}^2dt.
\end{multline*}
Assertion \eqref{add6eq55} holds for $i=0$.

On the other hand, observing $\phi^0_{T,(m)}|_{y=0}=\partial_y\phi^0_{T,(m)}|_{y\to+\infty}=0$, we notice that
\begin{align*}  
\phi^0_{T,(m)}(t,x,y)&=-\int^y_0\int^{+\infty}_{y^{'}}\partial_y^2\phi^0_{T,(m)}(t,x,y^{''})dy^{''}dy^{'}\\&=-\int^{min\{y,(1+m)^{-\frac{1}{2}}\}}_0\int^{+\infty}_{y^{'}}\partial_y^2\phi^0_{T,(m)}(t,x,y^{''})dy^{''}dy^{'}\\&\quad-\int^y_{min\{y,(1+m)^{-\frac{1}{2}}\}}\int^{+\infty}_{y^{'}}\partial_y^2\phi^0_{T,(m)}(t,x,y^{''})dy^{''}dy^{'}.
\end{align*}
Then it follows from Hardy inequality that
\begin{align*}
    \sup_{y\ge0}|\phi^0_{T,(m)}|\leq& C(m+1)^{-\frac{1}{4}}\left\|\int^{+\infty}_y\partial_y^2\phi^0_{T,(m)}(y^{'})dy^{'}\right\|_{L_y^2}\\&+C(m+1)^{\frac{1}{4}}\left\|y\int^{+\infty}_y\partial_y^2\phi^0_{T,(m)}(y^{'})dy^{'}\right\|_{L_y^2}\\\leq&C(m+1)^{-\frac{1}{4}}\left\|y\partial_y^2\phi^0_{T,(m)}\right\|_{L_y^2}+C(m+1)^{\frac{1}{4}}\left\|y^2\partial_y^2\phi^0_{T,(m)}\right\|_{L_y^2}.
\end{align*}
By Lemma \ref{lem:phiT}, we have
\begin{equation*}
    \begin{aligned}
&\int^T_0|\vec\phi^0_{T}|_{y=1}|_{\rho,\theta}^2dt\\&\leq C\int^T_0||y\partial_y^2\vec\phi^0_{T}||_{\rho,\theta-\frac{1}{4},I_0}^2dt+C\int^T_0||y^2\partial_y^2\vec\phi^0_{T}||_{\rho,\theta+\frac{1}{4},I_0}^2dt
\\&\leq\frac{CC_*M}{\beta^{\frac{3}{2}}}\int^T_0|h^0|_{\rho,\theta+\sigma-2}^2dt\leq\frac{CC_*M}{\beta^{\frac{1}{2}}}\int^T_0|h^0|_{\rho,\theta+\sigma-2}^2dt, 
    \end{aligned}
\end{equation*}
and similarly,
\begin{equation*}
    \begin{aligned}
&\int^T_0|\partial_x\vec\phi^0_{T}|_{y=1}|_{\rho,\theta}^2dt\leq\frac{CC_*M}{\beta^{\frac{1}{2}}}\int^T_0|h^0|_{\rho,\theta+\sigma-2}^2dt. 
    \end{aligned}
\end{equation*}
Thus, assertion \eqref{6eq57} holds for $i=0$.

Finally, for the boundary term $\partial_y\phi^0_{T,(m)}|_{y=0,1}$, by the interpolation inequality, we use Lemma \ref{lem:phiT} and \eqref{6eq55} to deduce that
\begin{multline*}
\int^T_0|\partial_y\vec\phi^0_{T}|_{y=0,1}|_{\rho,\theta}^2dt\leq C\int^T_0||\partial_y\vec\phi^0_{T}||_{\rho,\theta+\frac{1}{4},I_0}||\partial_y^2\vec\phi^0_{T}||_{\rho,\theta-\frac{1}{4},I_0}dt
\\\leq\frac{CC_*M}{\beta^{\frac{1}{2}}}\int^T_0|h^0|_{\rho,\theta+\sigma-\frac{3}{2}}^2dt.
\end{multline*}
This gives assertion \eqref{6eq56} for $i=0$, thus completing the proof of Lemma \ref{lem:phiTtwo}.
\end{proof}

\subsubsection{The estimate of $\vec\phi_{R}$: Full construction of boundary corrector} All we left is the term $\vec\phi_{R}$. Like previous argument, we define 
\begin{align*}
    \vec\phi_{R}=\{\phi_{R,(m)}^0\}_{m\ge 0}+\{\phi_{R,(m)}^1\}_{m\ge0}\stackrel{\rm def}{=}\vec\phi^0_R+\vec\phi^1_R,
\end{align*}
where for given $m\ge0$, $\phi_{R,(m)}^i$ $(i=0,1)$ satisfies the following equation  in the domain $\Omega$:
\begin{equation}\label{eq:phiR}
    \left\{
    \begin{aligned}
&(\partial_t+u\partial_x-\partial_y^2)\partial_y^2\phi^i_{R,(m)}-(\partial_x\phi^i_{R,(m)})\partial_y^2u=\sum^{m-1}_{k=[\frac{m}{2}]+1}\binom{m}{k}\phi^i_{R,(k+1)}\partial_x^{m-k}\partial_y^2u\\
&\quad-\sum^{[\frac{m}{2}]}_{k=1}\binom{m}{k}(\partial_x^ku)\partial_y^2\phi^i_{R,(m-k+1)}-\sum^m_{k=[\frac{m}{2}]+1}\binom{m}{k}(\partial_y\phi^i_{R,(k)})\partial_x^{m-k+1}\partial_yu\\
&\quad-\sum^{[\frac{m}{2}]}_{k=0}\binom{m}{k}(\partial_x^kv)\partial_y^3\phi^i_{R,(m-k)}+\mathcal{R}^i_{R,(m)},\\
&\phi^i_{R,(m)}|_{y=1-i}=-(\partial_x^m\phi^i_{H}+\phi^i_{T,(m)})|_{y=1-i},\quad \phi^i_{R,(m)}|_{y=i}=0,\ \  \partial_y^2\phi^i_{R,(m)}|_{y=0,1}=0,\\
&\phi^i_{R,(m)}|_{t=0}=0,        
    \end{aligned}
    \right.
\end{equation}
where $\mathcal{\vec R}^i_R=\{\mathcal{R}^i_{R,(m)}\}_{m\ge 0}$ is defined by
\begin{equation}\label{def:RRTH}
    \begin{aligned}
\mathcal{R}^i_{R,(m)}\stackrel{\rm def}{=}  &\sum^{m}_{k=[\frac{m}{2}]+1}\binom{m}{k}(\partial_x^{k+1}\phi^i_{H})\partial_x^{m-k}\partial_y^2u +(\partial_x\phi_{T,(m)}^i)\partial_y^2u\\
&+\sum^{m-1}_{k=[\frac{m}{2}]+1}\binom{m}{k}(\phi^i_{T,(k+1)})\partial_x^{m-k}\partial_y^2u.
    \end{aligned}
\end{equation}

In order to homogenize boundary condition, we introduce $\vec F^i_R=\{F^i_{R,(m)}\}_{m\ge 0}$, $\vec G^i_R=\{G^i_{R,(m)}\}_{m\ge 0}$ and $\vec\Phi^i_{R}=\{\Phi^i_{R,(m)}\}_{m\ge0}$ by setting
\begin{equation}\label{6eq61}
		\left\{
		\begin{aligned}
&F^i_{R,(m)}\stackrel{\rm def}{=}-(\partial_x^m\phi^i_{H}+\phi^i_{T,(m)})|_{y=1-i},\\
&G^i_{R,(m)}\stackrel{\rm def}{=}\varphi^i(\phi^i_{T,(m)}+B^i_{H,(m)}),\\
&\Phi^i_{R,(m)}\stackrel{\rm def}{=}\phi^i_{R,(m)}+G^i_{R,(m)},
		\end{aligned}
		\right.
\end{equation}
where $\vec B^i_H=\{B^i_{H,(m)}\}_{m\ge 0}$ is given by
\begin{equation}\label{def:B}
    \widehat{L_{\rho,m,\theta}B^i_{H,(m)}}(\xi,x,y)\stackrel{\rm def}{=}\widehat{\phi^i_{H,m}}|_{y=1-i}e^{-\varphi^{1-i}\sqrt{i\xi+\beta(m+1)}}
\end{equation}
for given $\theta\in\mathbb{R}$. Recall that $\phi^i_{H,m}=L_{\rho,m,\theta}\partial_x^m\phi^i_H$.
Then $\Phi^i_{R,(m)}$ satisfies
\begin{equation}\label{eq:PHI}
		\left\{
		\begin{aligned}
&\partial_y^2\Phi^i_{R,(m)}=\partial_y^2\phi^i_{R,(m)}+\partial_y^2G^i_{R,(m)},\\
&\Phi^i_{R,(m)}|_{y=0,1}=0.
		\end{aligned}
		\right.
\end{equation}
We now derive some estimates for $\vec\phi_R$ by estimating $\vec F^i_{R}$, $\vec G^i_{R}$ and $\vec\Phi^i_R$.

\begin{lemma}\label{lem:FG}
Under the same assumption as given in Theorem \ref{thm:pri} with $1\leq \sigma\leq \frac{7}{6}$ relaxed to $1\leq\sigma\leq\frac{3}{2}$, it holds that for given $\theta\in \mathbb{R}$ and $i=0,1$, 
\begin{align}\label{aq52}
  \int^T_0|\vec F^i_R|_{\rho,\theta}^2dt+\int^T_0|\partial_x\vec F^i_R|_{\rho,\theta-\sigma}^2dt \leq \frac{CC_*M}{\beta^{\frac{1}{2}}}\int^T_0|h^i|_{\rho,\theta-\frac{1}{2}}^2dt, 
\end{align}
\begin{align}\label{aq54}  \int^T_0||\vec G^i_R||_{\rho,\theta}^2+||\partial_y\vec G^i_R||_{\rho,\theta}^2+||\varphi^i\partial_y^2\vec G^i_R||_{\rho,\theta}^2dt\leq \frac{CC_*M}{\beta^{\frac{1}{2}}}\int^T_0|h^i|_{\rho,\theta-\frac{3}{4}}^2dt,
\end{align}
\begin{align}\label{aq55}
    \int^T_0||\partial_y^2\vec G^i_R||_{\rho,\theta}^2dt\leq \frac{CC_*M}{\beta^{\frac{1}{2}}}\int^T_0|h^i|_{\rho,\theta-\frac{1}{4}}^2dt,
\end{align}
provided $\beta$ is sufficiently large. Moreover, $\vec\phi^i_{R}=\{\phi^i_{R,(m)}\}_{m\ge 0} (i=0,1)$ has the following estimate:
\begin{multline}\label{6eq64}
\int^T_0|\partial_y\vec {\phi}^i_{R}|_{y=0,1}|_{\rho,\theta}^2+\|\partial_y\vec \phi^i_{R}\|_{\rho,\theta}^2+\|\vec\phi^i_{R}\|_{\rho,\theta}^2dt\\
\leq\frac{CC_*M}{\beta^{\frac{1}{2}}}\int^T_0|h^i|_{\rho,\theta-\frac{3}{4}}^2dt+C\int^T_0\|\partial_y^2\vec\phi^i_{R}\|_{\rho,\theta}^2dt.
\end{multline}
Recall that $\phi^i_{R,(m)}$ is the solution of \eqref{eq:phiR} and $\vec F^i_{R}$, $\vec G^i_{R}$ are defined in \eqref{6eq61}.
\end{lemma}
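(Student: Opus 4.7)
The strategy is cascading: I would prove \eqref{aq52} directly from the boundary-trace bounds of Lemmas~\ref{lem:phiH} and \ref{lem:phiTtwo}; deduce \eqref{aq54} and \eqref{aq55} by splitting $G^i_{R,(m)}=\varphi^i\phi^i_{T,(m)}+\varphi^i B^i_{H,(m)}$ into a transport piece and a heat piece; and finally obtain \eqref{6eq64} by exploiting the homogeneous Dirichlet data of $\vec\Phi^i_R=\vec\phi^i_R+\vec G^i_R$ together with the already-established bounds on $\vec G^i_R$.

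For \eqref{aq52}, write $F^i_{R,(m)}=-\partial_x^m\phi^i_H|_{y=1-i}-\phi^i_{T,(m)}|_{y=1-i}$. The first term is controlled by Lemma~\ref{lem:phiH}, which produces a factor $\beta^{-1}$ that is absorbed into the $CC_*M\beta^{-1/2}$ target for $\beta\gg 1$. The second term is precisely \eqref{6eq57} in Lemma~\ref{lem:phiTtwo}; its right-hand side carries the index $\theta+\sigma-2$, and under the relaxed hypothesis $1\le\sigma\le 3/2$ one has $\theta+\sigma-2\le\theta-1/2$, so monotonicity of $|\cdot|_{\rho,\cdot}$ in the index closes the estimate. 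The $\partial_x$-version is handled identically after shifting every index by $-\sigma$.

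For \eqref{aq54}--\eqref{aq55} I split $G^i_{R,(m)}$ as above. Leibniz applied to $\varphi^i\phi^i_{T,(m)}$, together with $\partial_y\varphi^i=\pm 1$ and $\partial_y^2\varphi^i=0$, reduces all required norms to weighted quantities $\|(\varphi^i)^a\partial_y^b\phi^i_T\|_{\rho,\cdot,I_i}$ with $a+b\le 2$, each of which is supplied by Lemmas~\ref{lem:phiT} and \ref{lem:phiTtwo}. For the heat piece $\varphi^i B^i_{H,(m)}$, I would apply Plancherel in $t$ to the explicit representation \eqref{def:B}. The $y$-integral
\[
\int_{I_i}(\varphi^i)^{2a}\bigl|\sqrt{i\xi+\beta(m+1)}\bigr|^{2b}e^{-2\varphi^{1-i}\mathbf{Re}(\sqrt{i\xi+\beta(m+1)})}\,dy
\]
collapses to a power of $\beta(m+1)$ by a standard exponential-integral estimate together with \eqref{6eq9}; combined with the boundary bound \eqref{6eq17} on $\widehat{\phi^i_{H,m}}|_{y=1-i}$ and summation in $m$ via definition \eqref{def2}, this produces the desired $\beta^{-1/2}$ factor and the indicated loss of $3/4$ (respectively $1/4$) in the $h^i$-index. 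The $\varphi^i$ weight in $\|\varphi^i\partial_y^2\vec G^i_R\|$ is essential: without it one cannot absorb the $|\sqrt{i\xi+\beta(m+1)}|^4$ factor generated by $\partial_y^2$ and is forced to put $|h^i|_{\rho,\theta-1/4}$ on the right, which is exactly what appears in \eqref{aq55}.

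For \eqref{6eq64}, the central observation is that $\Phi^i_{R,(m)}|_{y=0,1}=0$ by \eqref{eq:PHI}, so Poincar\'e gives
\[
\|\vec\Phi^i_R\|_{\rho,\theta}^2+\|\partial_y\vec\Phi^i_R\|_{\rho,\theta}^2\le C\|\partial_y^2\vec\Phi^i_R\|_{\rho,\theta}^2\le C\|\partial_y^2\vec\phi^i_R\|_{\rho,\theta}^2+C\|\partial_y^2\vec G^i_R\|_{\rho,\theta}^2.
\]
Combining with the triangle inequalities $\|\vec\phi^i_R\|\le\|\vec\Phi^i_R\|+\|\vec G^i_R\|$ and its $\partial_y$ analogue, then substituting \eqref{aq54}--\eqref{aq55}, yields the interior part of \eqref{6eq64}. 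For the boundary trace $|\partial_y\vec\phi^i_R|_{y=0,1}|_{\rho,\theta}$ I would apply the one-dimensional interpolation $|w|_{y=0,1}|_{L_x^2}^2\le C\|w\|_{L^2}\|\partial_y w\|_{L^2}$ with $w=\partial_y\phi^i_{R,(m)}$, weight by $L_{\rho,m,\theta}^2$, sum in $m$, and close by Cauchy--Schwarz. The main technical hurdle is the third paragraph's Fourier bookkeeping, where one must carefully track how the weights $(\varphi^i)^a$, the differential factors $|\sqrt{i\xi+\beta(m+1)}|^{2b}$, and the exponential localization $e^{-\varphi^{1-i}\mathbf{Re}(\sqrt{i\xi+\beta(m+1)})}$ near the opposite boundary interact to produce exactly the target powers of $\beta$ and the correct loss of index on $h^i$.
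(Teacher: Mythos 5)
Your arguments for \eqref{aq52}, \eqref{aq54}, and \eqref{aq55} track the paper's proof closely: the trace bounds from Lemma~\ref{lem:phiH} and estimate~\eqref{6eq57} of Lemma~\ref{lem:phiTtwo}, with the constraint $\sigma\le 3/2$ giving $\theta+\sigma-2\le\theta-\tfrac12$, and the Leibniz splitting of $\varphi^i(\phi^i_T+B^i_H)$ handled by the weighted transport bounds and a direct Plancherel computation on $B^i_H$ from \eqref{def:B}, are all exactly what the paper does.

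Your treatment of \eqref{6eq64}, however, has a genuine gap: it proves a strictly weaker estimate than the one stated. You apply the unweighted Poincar\'e chain $\|\partial_y\vec\Phi^i_R\|_{\rho,\theta}\le C\|\partial_y^2\vec\Phi^i_R\|_{\rho,\theta}\le C\|\partial_y^2\vec\phi^i_R\|_{\rho,\theta}+C\|\partial_y^2\vec G^i_R\|_{\rho,\theta}$, and then you are forced to invoke \eqref{aq55} for the last term, whose right-hand side carries $|h^i|_{\rho,\theta-\frac14}^2$. The result is
\begin{equation*}
\int^T_0\|\vec\phi^i_R\|_{\rho,\theta}^2+\|\partial_y\vec\phi^i_R\|_{\rho,\theta}^2\,dt\le\frac{CC_*M}{\beta^{1/2}}\int^T_0|h^i|_{\rho,\theta-\frac14}^2\,dt+C\int^T_0\|\partial_y^2\vec\phi^i_R\|_{\rho,\theta}^2\,dt,
\end{equation*}
which since $\theta-\tfrac14>\theta-\tfrac34$ is weaker than \eqref{6eq64} by half a unit of the Gevrey index. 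The paper instead uses a weighted Hardy inequality: since $\Phi^i_{R,(m)}|_{y=0,1}=0$, one integrates by parts in $\|\partial_y\Phi^i_{R,(m)}\|_{L^2}^2=-(\Phi^i_{R,(m)},\partial_y^2\Phi^i_{R,(m)})_{L^2}$ and combines Cauchy--Schwarz with $\|(\varphi^i)^{-1}\Phi^i_{R,(m)}\|_{L^2}\le C\|\partial_y\Phi^i_{R,(m)}\|_{L^2}$ to get $\|\partial_y\Phi^i_{R,(m)}\|_{L^2}\le C\|\varphi^i\partial_y^2\Phi^i_{R,(m)}\|_{L^2}$. This places a $\varphi^i$ weight in front of $\partial_y^2$, so that the $G^i_R$ contribution is controlled by the \emph{weighted} term $\|\varphi^i\partial_y^2\vec G^i_R\|_{\rho,\theta}$ in \eqref{aq54} (index $\theta-\tfrac34$, not $\theta-\tfrac14$), while the $\phi^i_R$ piece is absorbed using $\varphi^i\le 1$ on $\Omega$. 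That single weighted step is the observation you are missing; without it the claimed index $\theta-\tfrac34$ in \eqref{6eq64} is not recovered.
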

\begin{proof}
Here, we only give the proof of the case $i=0$. The case $i=1$ is the same.
By the definition of $F^0_{R}$ in \eqref{6eq61}, we get
\begin{equation*}
    \begin{aligned}
&\int^T_0|\vec F^0_R|_{\rho,\theta}^2dt\leq   \int^T_0|\phi^0_{H}|_{y=1}|_{\rho,\theta}^2dt+\int^T_0|\vec\phi^0_{T}|_{y=1}|_{\rho,\theta}^2dt\\
&\leq \frac{CC_*M}{\beta^{\frac{1}{2}}}\int^T_0|h^0|_{\rho,\theta-\frac{1}{2}}^2dt+ \frac{CC_*M}{\beta^{\frac{1}{2}}}\int^T_0|h^0|_{\rho,\theta+\sigma-2}^2dt\leq  \frac{CC_*M}{\beta^{\frac{1}{2}}}\int^T_0|h^0|_{\rho,\theta-\frac{1}{2}}^2dt,      
    \end{aligned}
\end{equation*}
the second inequality following from Lemmas \ref{lem:phiH} and \ref{lem:phiTtwo} and the last inequality using $1\leq \sigma\leq \frac{3}{2}$. Similarly,
\begin{equation*}
\int^T_0|\partial_x\vec F^0_R|_{\rho,\theta-\sigma}^2dt   \leq  \frac{CC_*M}{\beta^{\frac{1}{2}}}\int^T_0|h^0|_{\rho,\theta-\frac{1}{2}}^2dt. 
\end{equation*}
Assertion \eqref{aq52} holds for $i=0$.

On the other hand, by the representation \eqref{express:phiH0} of $\widehat{\phi^0_{H,m}}$ and \eqref{def:B}, we calculate that for any $\theta\in\mathbb{R}$,
\begin{multline*}
\int^T_0\big(||(\vec{B}_H^0,y\partial_y\vec{B}_H^0,y^2\partial_y^2\vec{B}_H^0)||_{\rho,\theta}^2+||y\vec{B}_H^0||_{\rho,\theta+\frac{1}{2}}^2\big)dt\\
+\int^T_0||(\partial_y\vec{B}_H^0,y\partial_y^2\vec{B}_H^0)||_{\rho,\theta-\frac{1}{2}}^2dt\leq \frac{CC_*M}{\beta^{\frac{1}{2}}}\int^T_0|h^0|_{\rho,\theta-\frac{3}{4}}^2dt,  
\end{multline*}
Then, we use the above estimate and Lemmas \ref{lem:phiT}-\ref{lem:phiTtwo} to conclude that for given $\theta\in\mathbb{R}$, recalling that $\vec G^0_R$ is given in \eqref{6eq61} and $1\leq \sigma\leq \frac{3}{2}$,
\begin{equation}\label{hgh2}
    \begin{aligned}
\int^T_0||\vec G^0_R||_{\rho,\theta}^2dt&\leq   \int^T_0||y\vec{B}_H^0||_{\rho,\theta}^2dt+\int^T_0||y\vec\phi^0_{T}||_{\rho,\theta}^2dt\\
&\leq\int^T_0||y\vec{B}_H^0||_{\rho,\theta}^2dt+\int^T_0||\vec\phi^0_{T}||_{\rho,\theta}^2dt\leq \frac{CC_*M}{\beta^{\frac{1}{2}}}\int^T_0|h^0|_{\rho,\theta-\frac{3}{4}}^2dt,   
    \end{aligned}
\end{equation}
\begin{equation}\label{hgh}
    \begin{aligned}
&\int^T_0\big(||\partial_y\vec G^0_R||_{\rho,\theta}^2+||y\partial_y^2\vec G^0_R||_{\rho,\theta}^2\big)dt\\
&\leq  C \int^T_0||(\vec{B}_H^0,y\partial_y\vec{B}_H^0,y^2\partial_y^2\vec{B}_H^0)||_{\rho,\theta}^2dt+C\int^T_0||(\vec\phi^0_{T},y\partial_y\vec\phi^0_{T},y^2\partial_y^2\vec\phi^0_{T})||_{\rho,\theta}^2dt\\
&\leq \frac{CC_*M}{\beta^{\frac{1}{2}}}\int^T_0|h^0|_{\rho,\theta-\frac{3}{4}}^2dt,   
    \end{aligned}
\end{equation}
and
\begin{equation*}
    \begin{aligned}
\int^T_0||\partial_y^2\vec G^0_R||_{\rho,\theta}^2dt&\leq  2\int^T_0||(\partial_y\vec{B}_H^0,y\partial_y^2\vec{B}_H^0)||_{\rho,\theta}^2dt+2\int^T_0||(\partial_y\vec\phi^0_{T},y\partial_y^2\vec\phi^0_{T})||_{\rho,\theta}^2dt\\&\leq \frac{CC_*M}{\beta^{\frac{1}{2}}}\int^T_0|h^0|_{\rho,\theta-\frac{1}{4}}^2dt.   
    \end{aligned}
\end{equation*}
Thus, assertions \eqref{aq54} and \eqref{aq55} holds for $i=0$.

It remains to estimate \eqref{6eq64}. In view of \eqref{eq:PHI}, we use Hardy inequality to obtain
\begin{equation*}
  ||\partial_y\Phi^0_{R,(m)}||_{L^2}\leq C||y\partial_y^2\Phi^0_{R,(m)}||_{L^2}\leq C||y\partial_y^2\phi^0_{R,(m)}||_{L^2}+C||y\partial_y^2G^0_{R,(m)}||_{L^2},  
\end{equation*}
which with \eqref{hgh} and $0\leq y\leq 1$ gives
\begin{equation*}
    \int^T_0||\partial_y\vec{\Phi}^0_{R}||_{\rho,\theta}^2dt\leq  \frac{CC_*M}{\beta^{\frac{1}{2}}}\int^T_0|h^0|_{\rho,\theta-\frac{3}{4}}^2dt+C\int^T_0||\partial_y^2\vec\phi^0_{R}||_{\rho,\theta}^2dt. 
\end{equation*}
Bringing $\partial_y\vec\phi^0_{R}=\partial_y\vec\Phi^0_{R}-\partial_y\vec G^0_{R}$ into the above inequality and using \eqref{hgh}  again yield
\begin{multline*}
\int^T_0||\partial_y\vec\phi^0_{R}||_{\rho,\theta}^2dt\leq\int^T_0||\partial_y\vec{\Phi}^0_{R}||_{\rho,\theta}^2dt+\int^T_0||\partial_y\vec G^0_R||_{\rho,\theta}^2dt
\\
\leq  \frac{CC_*M}{\beta^{\frac{1}{2}}}\int^T_0|h^0|_{\rho,\theta-\frac{3}{4}}^2dt+C\int^T_0||\partial_y^2\vec\phi^0_{R}||_{\rho,\theta}^2dt. 
\end{multline*}
This with Sobolev embedding inequality gives
\begin{align*}
\int^T_0|\partial_y{\vec\phi}^0_{R}|_{y=0,1}|_{\rho,\theta}^2dt
&\leq C\int^T_0||\partial_y{\vec\phi}^0_{R}||_{\rho,\theta}^2dt+C\int^T_0||\partial_y^2{\vec\phi}^0_{R}||_{\rho,\theta}^2dt\\&\leq \frac{CC_*M}{\beta^{\frac{1}{2}}}\int^T_0|h^0|_{\rho,\theta-\frac{3}{4}}^2dt+C\int^T_0||\partial_y^2\vec\phi^0_{R}||_{\rho,\theta}^2dt.
\end{align*}
Moreover, Poincar\'e inequality with \eqref{hgh2} implies that
\begin{multline*}
\int^T_0||\vec\phi^0_{R}||_{\rho,\theta}^2dt\leq C\int^T_0\big(||\partial_y\vec{\Phi}^0_{R}||_{\rho,\theta}^2+||\vec G^0_R||_{\rho,\theta}^2\big)dt
\\
\leq  \frac{CC_*M}{\beta^{\frac{1}{2}}}\int^T_0|h^0|_{\rho,\theta-\frac{3}{4}}^2dt+C\int^T_0||\partial_y^2\vec\phi^0_{R}||_{\rho,\theta}^2dt.  
\end{multline*}
Thus, we get the validity of \eqref{6eq64} for $i=0$, completing the proof of Lemma \ref{lem:FG}.
\end{proof}

The estimate of $\mathcal{\vec R}^i_R$ follows from  assumption \eqref{ass:pri} and Lemmas \ref{lem:principle}, \ref{lem:phiH} and \ref{lem:phiTtwo}. This estimate is stated as follows. 
\begin{lemma}\label{lem:RR}
 Under the same assumption as given in Theorem \ref{thm:pri} with $1\leq \sigma\leq \frac{7}{6}$ relaxed to $1\leq\sigma\leq\frac{3}{2}$, it holds that for given $\theta\in \mathbb{R}$ and $i=0,1$,
 \begin{equation*}
\int^T_0\norm{\mathcal{\vec R}^i_{R}}_{\rho,\theta}^2 dt\leq \frac{CC_*^2M^2}{\beta^{\frac{1}{2}}}\int^T_0|h^i|_{\rho,\theta+1}^2dt,
 \end{equation*}
provided $\beta$ is sufficiently large. Recall that $\mathcal{\vec R}^i_R$ is defined by \eqref{def:RRTH}.
\end{lemma}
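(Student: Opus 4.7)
The plan is to estimate each of the three summands in the definition \eqref{def:RRTH} of $\mathcal R^i_{R,(m)}$ separately, by applying Young's discrete convolution inequality \eqref{young} in the same pattern used throughout Section \ref{sec:x0} and in Lemma \ref{lem:RiTH}, and then to invoke the already-proved bounds on $\phi^i_H$, $\vec\phi^i_T$, and $\partial_y^2u$ from Lemmas \ref{lem:phiH}, \ref{lem:phiTtwo}, and \ref{lem:principle}.

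As a preliminary I would record the estimate
\begin{equation*}
\sup_{t\in[0,T]}\sum_{j\ge 0}L_{\rho,j+2,r-\sigma}^2\norm{\partial_x^j\partial_y^2u}_{L^\infty}^2\le CC_*M,
\end{equation*}
which follows from $\norm{\partial_y^2u}_{\rho,r-\sigma}\le C\sqrt{C_*M}$ (Lemma \ref{lem:principle}), the bound \eqref{est:dy3u} on $\partial_y^3u$, and the Sobolev embeddings $H_x^2\subset L_x^\infty$ and $H_y^1\subset L_y^\infty$. This will be the $\ell^1$-summable low-index factor extracted by Young's inequality. In particular it also gives the pointwise bound $\norm{\partial_y^2u}_{L^\infty}\le C\sqrt{C_*M}$.

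For the first summand $\sum_{k=[m/2]+1}^{m}\binom{m}{k}(\partial_x^{k+1}\phi^i_H)\partial_x^{m-k}\partial_y^2u$, I would set $j=m-k\in[0,[m/2]]$, bound each term by $\norm{\partial_x^{k+1}\phi^i_H}_{L^2}\norm{\partial_x^j\partial_y^2u}_{L^\infty}$, and apply \eqref{young} together with the combinatorial identity
\begin{equation*}
\binom{m}{j}\frac{L_{\rho,m,\theta}}{L_{\rho,m-j+1,\theta+\sigma}L_{\rho,j+2,r-\sigma}}\le\frac{C}{j+1},\qquad 0\le j\le\Big[\frac{m}{2}\Big],
\end{equation*}
which is routine to verify from \eqref{def2} using $r\ge 10$ and $1\le\sigma\le\tfrac{3}{2}$. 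This reduces the estimate to $CC_*M\int_0^T\norm{\phi^i_H}_{\rho,\theta+\sigma,I_i}^2dt$, and Lemma \ref{lem:phiH} at index $\theta+\sigma$ then bounds this by $\tfrac{CC_*M}{\beta}\int_0^T|h^i|_{\rho,\theta+\sigma-\frac{1}{2}}^2dt$, which is absorbed into the desired $\tfrac{CC_*^2M^2}{\beta^{1/2}}\int_0^T|h^i|_{\rho,\theta+1}^2dt$ since $\sigma-\tfrac{1}{2}\le 1$.

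The third summand $\sum_{k=[m/2]+1}^{m-1}\binom{m}{k}\phi^i_{T,(k+1)}\partial_x^{m-k}\partial_y^2u$ is handled by exactly the same argument, using Lemma \ref{lem:phiTtwo} (in the form \eqref{add6eq55}) in place of Lemma \ref{lem:phiH}; the resulting index shift $2\sigma-\tfrac{9}{4}\le\tfrac{3}{4}$ is again harmless. The middle summand $(\partial_x\phi^i_{T,(m)})\partial_y^2u$ is controlled directly by pulling out $\norm{\partial_y^2u}_{L^\infty}\le C\sqrt{C_*M}$ in $L^\infty_t$ and applying \eqref{add6eq55} to $\norm{\partial_x\vec\phi^i_T}_{\rho,\theta}^2=\norm{\partial_x\vec\phi^i_T}_{\rho,(\theta+\sigma)-\sigma}^2$. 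Summing the three pieces yields the stated bound. The only real difficulty is the bookkeeping of the combinatorial identities, but these are structurally identical to those used in Lemma \ref{lem:RiTH} and throughout Section \ref{sec:x0}, so I would simply adapt the verifications there.
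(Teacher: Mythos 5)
Your overall architecture matches the paper's proof exactly: split $\mathcal R^i_{R,(m)}$ into its three summands, use Young's discrete convolution \eqref{young} with a combinatorial identity that extracts $\partial_y^j\partial_x^\bullet\partial_y^2u$ as the low-index $L^\infty$ factor, and then invoke Lemmas \ref{lem:phiH} and \ref{lem:phiTtwo} at index $\theta+\sigma$ (and at $\theta$ for $\partial_x\vec\phi_T^i$), closing the argument with $1\le\sigma\le\tfrac32$, $\beta\ge1$, $C_*M\ge1$. Your re-indexed identity with $j=m-k$ is the same as the paper's (since $\binom{m}{k}=\binom{m}{j}$, $L_{\rho,k+1,\theta+\sigma}=L_{\rho,m-j+1,\theta+\sigma}$, $L_{\rho,m-k+2,r-\sigma}=L_{\rho,j+2,r-\sigma}$). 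That part is fine.

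There is, however, a gap in how you justify the preliminary estimate
$\sup_{t\in[0,T]}\sum_{j\ge0}L_{\rho,j+2,r-\sigma}^2\norm{\partial_x^j\partial_y^2u}_{L^\infty}^2\le CC_*M$.
The $H_y^1\subset L_y^\infty$ step produces terms $\norm{\partial_x^{j+\alpha}\partial_y^3u}_{L^2}$ for \emph{every} $j$, so closing the Gevrey sum requires a bound of the form $\norm{\partial_y^3u}_{\rho,r-\sigma}\le CC_*M$. The estimate \eqref{est:dy3u} you cite, namely $\norm{\partial_y^3u}_{H_x^2L_y^2}\le CC_*M$, controls only a finite number of $x$-derivatives and cannot deliver the summed Gevrey bound; as written, the preliminary estimate does not follow from the three ingredients you list. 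The paper instead derives the needed Gevrey bound directly inside the proof of this lemma, using the identity $\partial_y^3u=(\partial_t+u\partial_x+v\partial_y)\partial_yu-(f\partial_x+g\partial_y)\partial_yf$ together with assumption \eqref{ass:pri} (which gives Gevrey control of $\partial_t\partial_yu$, $u$, $v$, $\partial_yu$, $f$, $g$, $\partial_yf$, $\partial_y^2f$) and Lemma \ref{lem:principle} (for $\partial_y^2u$). Replacing the citation of \eqref{est:dy3u} with that derivation fixes the gap; the rest of your argument then goes through unchanged.
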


\begin{proof}
Observing $\partial_y^3u=(\partial_t+u\partial_x+v\partial_y)\partial_yu-(f\partial_x+g\partial_y)\partial_yf$,  we use assumption \eqref{ass:pri} and Lemma \ref{lem:principle} to conclude that
\begin{equation*}
\sup_{t\in[0,T]}\norm{\partial_y^3u}_{\rho,r-\sigma}\leq CC_*M.
\end{equation*}
On the other hand, for any given $r\ge 10$, $1\leq \sigma\leq \frac{3}{2}$ and $\theta\in\mathbb{R}$, we have, recalling the definition \eqref{def2} of $L_{\rho,m,r}$,
 \begin{equation*}
     \begin{aligned}
\binom{m}{k}\frac{L_{\rho,m,\theta}}{L_{\rho,k+1,\theta+\sigma}L_{\rho,m-k+2,r-\sigma}}\leq \frac{C}{m-k+1},\quad \mathrm{if}\  \ \big[\frac{m}{2}\big]+1\leq k\leq m.
     \end{aligned}
 \end{equation*}
 Following an analogous argument in Lemma \ref{lem:RiTH}, we use the above estimates along with assumption \eqref{ass:pri} and Lemmas \ref{lem:phiH} and \ref{lem:phiTtwo} to deduce that
 \begin{equation*}
 \begin{aligned}
    \int^T_0\norm{\mathcal{\vec R}^i_{R}}_{\rho,\theta}^2 dt&\leq CC_*M\int^T_0\big(\norm{\vec\phi^i_H}_{\rho,\theta+\sigma}^2+\norm{\vec\phi^i_T}_{\rho,\theta+\sigma}^2+\norm{\partial_x\vec\phi^i_T}_{\rho,\theta}^2\big)dt\\
    &\leq \frac{CC_*M}{\beta}\int^T_0|h^i|_{\rho,\theta+\sigma-\frac{1}{2}}^2dt+\frac{CC_*^2M^2}{\beta^\frac{1}{2}}\int^T_0|h^i|_{\rho,\theta+2\sigma-\frac{9}{4}}^2dt\\
    &\leq \frac{CC_*^2M^2}{\beta^\frac{1}{2}}\int^T_0|h^i|_{\rho,\theta+1}^2dt,
 \end{aligned}
 \end{equation*}
 the last inequality using $\beta\ge 1$, $1\leq\sigma\leq \frac{3}{2}$ and $C_*M\ge 1$. This completes the proof.
\end{proof}

We now proceed to the main part of this subsection: deriving estimates for the system \eqref{eq:phiR}.
\begin{lemma}\label{lem:phiR}
Under the same assumption as given in Theorem \ref{thm:pri} with $1\leq \sigma\leq \frac{7}{6}$ relaxed to $1\leq\sigma\leq\frac{3}{2}$, it holds that for given $\theta\in \mathbb{R}$ and $i=0,1$, 
\begin{multline}\label{6eq69}
\sup_{t\in[0,T]}||\partial_y^2\vec\phi^i_{R}||_{\rho,\theta}^2+\beta\int^T_0||\partial_y^2\vec\phi^i_{R}||_{\rho,\theta+\frac{1}{2}}^2dt
+\int^T_0||\partial_y^3\vec\phi^i_{R}||_{\rho,\theta}^2dt\\
\leq \frac{C\delta^{-3}C_*^2M^2}{\beta^\frac{1}{2}}\int^T_0|h^i|_{\rho,\theta+\frac{1}{2}}^2dt 
\end{multline}
provided $\beta$ is sufficiently large. Recall that $\vec\phi^i_R=\{\phi^i_{R,(m)}\}_{m\ge0}$, where for each $m\ge 0$, $\phi^i_{R,(m)}$ is the solution of system \eqref{eq:phiR}.
\end{lemma}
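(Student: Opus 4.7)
The plan is to mimic the weighted energy estimate of Lemma \ref{lem:phis} directly on the system \eqref{eq:phiR}, testing against the weight $\partial_y^2\phi^i_{R,(m)}/\partial_y^2 u$ and exploiting the convexity bound of Proposition \ref{prop:principle}. After multiplying by $L_{\rho,m,\theta}^2$, summing in $m\ge 0$, applying \eqref{factone} and integrating on $[0,T]$, the viscous term $-\partial_y^4\phi^i_{R,(m)}$ would produce the coercive quantity $\int_0^T\|\partial_y^3\vec\phi^i_R\|_{\rho,\theta}^2\,dt$ (up to a $\delta^{-1}$ from the weight), with no boundary contribution thanks to the slip condition $\partial_y^2\phi^i_{R,(m)}|_{y=0,1}=0$; the time derivative would yield the $\sup_{t\in[0,T]}\|\partial_y^2\vec\phi^i_R\|_{\rho,\theta}^2$ piece and, via $\tfrac{d}{dt}L^2_{\rho,m,\theta}=-2\beta(m+1)L^2_{\rho,m,\theta}$, the parabolic gain $\beta\int_0^T\|\partial_y^2\vec\phi^i_R\|^2_{\rho,\theta+1/2}\,dt$.

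Next, the four convolution sums on the right of \eqref{eq:phiR} involving $u,v,\partial_yu,\partial_y^2 u$ would be treated as the analogues of $P_1$--$P_4$ in the proof of Lemma \ref{lem:phis}: I would use Young's inequality for discrete convolutions \eqref{young} with index-gain bounds in the spirit of \eqref{ineq3}, together with assumption \eqref{ass:pri}, Proposition \ref{prop:principle} and Lemma \ref{lem:principle}, to close them by an expression involving $\delta^{-2}C_*^{3/2}M^{3/2}$ times $\|\partial_y^2\vec\phi^i_R\|_{\rho,\theta+1/2}$ and $\|\partial_y^3\vec\phi^i_R\|_{\rho,\theta}$, which are absorbed into the coercive quantities once $\beta$ is large. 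The new source $\mathcal{R}^i_{R,(m)}$ would be paired with $\partial_y^2\phi^i_{R,(m)}/\partial_y^2 u$ and dominated by
\begin{equation*}
C\delta^{-1}\|\mathcal{\vec R}^i_R\|_{\rho,\theta-1/2}\,\|\partial_y^2\vec\phi^i_R\|_{\rho,\theta+1/2},
\end{equation*}
after which Lemma \ref{lem:RR} at index $\theta-1/2$ would deliver a contribution $\lesssim \delta^{-2}C_*^2M^2\beta^{-3/2}\int|h^i|^2_{\rho,\theta+1/2}\,dt$ plus a $\beta$-multiple of $\|\partial_y^2\vec\phi^i_R\|^2_{\rho,\theta+1/2}$ that parabolic-absorbs.

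The main obstacle, absent from Lemma \ref{lem:phis}, is the convex-transport contribution
\begin{equation*}
-\inner{(\partial_x\phi^i_{R,(m)})\partial_y^2 u,\ \partial_y^2\phi^i_{R,(m)}/\partial_y^2 u}_{L^2}=-\inner{\partial_x\phi^i_{R,(m)},\ \partial_y^2\phi^i_{R,(m)}}_{L^2},
\end{equation*}
which no longer vanishes because $\phi^i_{R,(m)}$ is nonzero on one boundary. One integration by parts in $y$ leaves a boundary integral $\pm\int_{\mathbb T}\partial_x F^i_{R,(m)}\,\partial_y\phi^i_{R,(m)}|_{y=1-i}\,dx$ in view of \eqref{6eq61}. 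To close it, my plan is to split the weight as $L_{\rho,m,\theta}^2=L_{\rho,m,\theta-1/2}\,L_{\rho,m,\theta+1/2}$ and apply Cauchy--Schwarz in $m$ and $x$, producing $C\delta^{-1}|\partial_x\vec F^i_R|_{\rho,\theta-1/2}\,|\partial_y\vec\phi^i_R|_{y=0,1}|_{\rho,\theta+1/2}$. Estimate \eqref{aq52} of Lemma \ref{lem:FG} would then control the first factor by $\tfrac{CC_*M}{\beta^{1/2}}|h^i|^2_{\rho,\theta+\sigma-1}$ in $L^2_t$, while \eqref{6eq64} would bound the second by $\tfrac{CC_*M}{\beta^{1/2}}|h^i|^2_{\rho,\theta-1/4}+C\|\partial_y^2\vec\phi^i_R\|^2_{\rho,\theta+1/2}$. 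Because $\sigma\le\tfrac{3}{2}$, both $|h^i|$-indices satisfy $\theta+\sigma-1\le\theta+1/2$ and $\theta-1/4\le\theta+1/2$ and can be merged into $|h^i|^2_{\rho,\theta+1/2}$; an AM--GM split carrying a small weight on the $\|\partial_y^2\vec\phi^i_R\|$-piece would absorb that piece into the parabolic gain $\beta\int\|\partial_y^2\vec\phi^i_R\|^2_{\rho,\theta+1/2}\,dt$. Collecting every contribution and choosing $\beta\gg\delta^{-3}C_*^2M^2$ would yield \eqref{6eq69}; the case $i=1$ would be handled symmetrically.
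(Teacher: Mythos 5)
Your proposal follows essentially the same route as the paper: the weighted energy estimate against $\partial_y^2\phi^i_{R,(m)}/\partial_y^2u$ as in Lemma \ref{lem:phis}, the convolution sums handled as the analogues of $P_1$–$P_4$, the source $\mathcal{R}^i_{R,(m)}$ paired and bounded via Lemma \ref{lem:RR}, and — the key new ingredient — the boundary integral $(-1)^i\int_{\mathbb T}(\partial_xF^i_{R,(m)})(\partial_y\phi^i_{R,(m)}|_{y=1-i})\,dx$ treated by splitting $L_{\rho,m,\theta}^2=L_{\rho,m,\theta-1/2}L_{\rho,m,\theta+1/2}$ and invoking \eqref{aq52} and \eqref{6eq64} of Lemma \ref{lem:FG}, which is exactly what the paper does. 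The only cosmetic difference is your $\beta$-weighted AM–GM for the $\mathcal{R}^i_R$ contribution (giving $\beta^{-3/2}$ instead of the paper's $\beta^{-1/2}$), and the statement "a $\beta$-multiple of $\|\partial_y^2\vec\phi^i_R\|^2_{\rho,\theta+1/2}$ that parabolic-absorbs" should more precisely be a fixed fraction of $\beta$ so that absorption into $\beta\int\|\partial_y^2\vec\phi^i_R\|^2_{\rho,\theta+1/2}\,dt$ actually goes through; both are minor and do not affect correctness.
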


\begin{proof}
We establish the result by applying the procedure of Lemmas \ref{lem:yu} and \ref{lem:phis}.  The main difference comes from the boundary conditions:
\begin{equation*}
    \phi^i_{R,(m)}|_{y=1-i}=-(\partial_x^m\phi^i_{H}+\phi^i_{T,(m)})|_{y=1-i},\quad \phi^i_{R,(m)}|_{y=i}=0.
\end{equation*}
This leads to 
\begin{align*}
   & \inner{(\partial_x\phi^i_{R,(m)})\partial_y^2u,\ \frac{\partial_y^2\phi^i_{R,(m)}}{\partial_y^2u}}_{L^2}=\inner{\partial_x\phi^i_{R,(m)},\ \partial_y^2\phi^i_{R,(m)}}_{L^2}\\
 &=(-1)^i\int_{\mathbb{T}}(\partial_x\phi^i_{R,(m)}|_{y=1-i})(\partial_y\phi^i_{R,(m)}|_{y=1-i})dx- \inner{\partial_x\partial_y\phi^i_{R,(m)},\ \partial_y\phi^i_{R,(m)}}_{L^2} \\&=(-1)^i\int_{\mathbb{T}}(\partial_x\phi^i_{R,(m)}|_{y=1-i})(\partial_y\phi^i_{R,(m)}|_{y=1-i})dx. 
\end{align*}
Recalling $\phi^i_{R,(m)}|_{y=1-i}=F^i_{R,(m)}$, we use Lemma \ref{lem:FG} to conclude that
\begin{align*}
  & \int^T_0\sum^{+\infty}_{m=0}L_{\rho,m,\theta}^2\inner{(\partial_x\phi^i_{R,(m)})\partial_y^2u,\ \frac{\partial_y^2\phi^i_{R,(m)}}{\partial_y^2u}}_{L^2}dt\\
  &\leq \int^T_0\sum^{+\infty}_{m=0}\inner{L_{\rho,m,\theta-\frac{1}{2}}\norm{\partial_xF^i_{R,(m)}}_{L_x^2}}\inner{L_{\rho, m,\theta+\frac{1}{2}}\norm{\partial_y\phi^i_{R,(m)}|_{y=1-i}}_{L_x^2}}dt\\
  &\leq \frac{CC_*M}{\beta^{\frac{1}{2}}}\int^T_0|h^i|_{\rho,\theta+\frac{1}{2}}^2dt+C\int^T_0||\partial_y^2\vec\phi^i_{R}||_{\rho,\theta+\frac{1}{2}}^2dt.
\end{align*}
On the other hand, for the remainder term $\mathcal{\vec R}^i_{R}=\{\mathcal{R}^i_{R,(m)}\}_{m\ge 0}$, Lemma \ref{lem:RR} with the convexity of $\partial_y^2u$ gives
\begin{align*}
     \int^T_0\sum^{+\infty}_{m=0}L_{\rho,m,\theta}^2\inner{\mathcal{R}^i_{R,(m)},\ \frac{\partial_y^2\phi^i_{R,(m)}}{\partial_y^2u}}_{L^2}dt
    &\leq C\delta^{-1}\int^T_0\norm{\mathcal{\vec R}^i_{R}}_{\rho,\theta-\frac{1}{2}}||\partial_y^2\vec\phi^i_{R}||_{\rho,\theta+\frac{1}{2}}dt\\
    \leq \frac{C\delta^{-2}C_*^2M^2}{\beta^\frac{1}{2}}&\int^T_0|h^i|_{\rho,\theta+\frac{1}{2}}^2dt+C\int^T_0||\partial_y^2\vec\phi^i_{R}||_{\rho,\theta+\frac{1}{2}}^2dt.
\end{align*}
Then combining the above estimates, we follow a similar procedure in Lemma \ref{lem:yu} to obtain, observing $0<\delta<\frac{1}{2}$ and $C_*M\ge 1$,
\begin{align*} &\sup_{t\in[0,T]}||\partial_y^2\vec\phi^i_{R}||_{\rho,\theta}^2+\beta\int^T_0||\partial_y^2\vec\phi^i_{R}||_{\rho,\theta+\frac{1}{2}}^2dt
+\int^T_0||\partial_y^3\vec\phi^i_{R}||_{\rho,\theta}^2dt\\
&\leq \frac{C\delta^{-3}C_*^2M^2}{\beta^\frac{1}{2}}\int^T_0|h^i|_{\rho,\theta+\frac{1}{2}}^2dt+C\delta^{-3}C_*^\frac{3}{2}M^\frac{3}{2}\int^T_0||\partial_y^2\vec\phi^i_{R}||_{\rho,\theta+\frac{1}{2}}^2 dt\\
&\quad+C\delta^{-3}C_*^\frac{3}{2}M^\frac{3}{2}\int^T_0||\partial_y^2\vec\phi^i_{R}||_{\rho,\theta+\frac{1}{2}}||\partial_y^3\vec\phi^i_{R}||_{\rho,\theta}dt\\
&\quad
+C\delta^{-3}C_*^\frac{3}{2}M^\frac{3}{2}\int^T_0||\partial_y^2\vec\phi^i_{R}||_{\rho,\theta+\frac{1}{2}}^\frac{1}{2}||\partial_y^3\vec\phi^i_{R}||_{\rho,\theta}^\frac{3}{2}dt\\
&\leq \frac{C\delta^{-3}C_*^2M^2}{\beta^\frac{1}{2}}\int^T_0|h^i|_{\rho,\theta+\frac{1}{2}}^2dt+C\delta^{-12}C_*^6M^6\int^T_0||\partial_y^2\vec\phi^i_{R}||_{\rho,\theta+\frac{1}{2}}^2 dt\\
&\quad+\frac{1}{2}\int^T_0||\partial_y^3\vec\phi^i_{R}||_{\rho,\theta}^2dt.\\
\end{align*}
Consequently, assertion \eqref{6eq69} follows by choosing $\beta$ large enough such that $\beta\ge 2C\delta^{-12}C_*^6M^6$ in the inequality above. The proof of Lemma \ref{lem:phiR} is thus completed.
\end{proof}

\subsubsection{The estimate of $\vec\phi_{b}$} With the estimates for $\phi_H$, $\vec\phi_T$, and $\vec\phi_R$ established, we now turn to deriving the estimate for $\vec\phi_{b}$.
\begin{lemma}\label{lem:phib}
Under the same assumption as given in Theorem \ref{thm:pri} with $1\leq \sigma\leq \frac{7}{6}$ relaxed to $1\leq\sigma\leq\frac{3}{2}$, it holds that for given $\theta\in \mathbb{R}$,
\begin{multline}\label{equa64}
\int^T_0\inner{||\vec\phi_{b}||_{\rho,\theta}^2+||y(1-y)\partial_y\vec\phi_{b}||_{\rho,\theta}^2+||y^2(1-y)^2\partial_y^2\vec\phi_{b}||_{\rho,\theta}^2}dt\\
\leq \frac{CC_*^2M^2\delta^{-3}}{\beta^{\frac{1}{2}}}\int^T_0|(h^0,h^1)|_{\rho,\theta}^2dt,
\end{multline}
provided $\beta$ is sufficiently large. Recall that $\vec\phi_b=\{\phi_{b,(m)}\}_{m\ge0}$ where for each $m\ge 0$, $\phi_{b,(m)}$ is the solution of \eqref{eq:phib}.
\end{lemma}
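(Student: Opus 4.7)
\textbf{Proof proposal for Lemma \ref{lem:phib}.} The plan is to invoke the decomposition \eqref{de:phib}, namely $\vec\phi_b = \{\partial_x^m(\phi_H^0 + \phi_H^1)\}_{m\ge 0} + \vec\phi_T^0 + \vec\phi_T^1 + \vec\phi_R^0 + \vec\phi_R^1$, and to estimate each of the six pieces separately by the lemmas already established in this subsection. The bridge between the one-sided weighted norms $\|\varphi^i\,\cdot\|_{\rho,\theta,I_i}$ used there and the symmetric weights $y(1-y)$, $y^2(1-y)^2$ appearing on $\Omega$ in \eqref{equa64} is the elementary observation that on $\Omega\subset\mathbb{T}\times I_i$ one has $y(1-y)\le \varphi^i(y)$ and $y^2(1-y)^2\le (\varphi^i(y))^2$ for both $i=0,1$, combined with the monotonicity $\|\cdot\|_{\rho,\theta}\le \|\cdot\|_{\rho,\theta,I_i}$.

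For the heat component $\phi_H^i$, I would apply Lemma \ref{lem:phiH} with $\ell=0$, $\ell=1$, and $\ell=3/2$ respectively to control the three norms in \eqref{equa64}; each application yields a factor $\beta^{-(2\ell+1)/2}$ or $\beta^{-\ell}$ together with a non-positive shift in the index of $|h^i|_{\rho,\cdot}$, so since $\beta\ge 1$ and $|\cdot|_{\rho,\theta}$ is non-decreasing in $\theta$, each contribution is dominated by $C\beta^{-1/2}|h^i|_{\rho,\theta}^2$. For the transport component $\vec\phi_T^i$, I would use \eqref{add6eq55} of Lemma \ref{lem:phiTtwo} for the zeroth-order and $\varphi^i\partial_y$ norms, and \eqref{6eq42} of Lemma \ref{lem:phiT} with $j=2$ (after shifting $\theta\to\theta-1/2$ to absorb the $\beta$ on the left-hand side) for the $(\varphi^i)^2\partial_y^2$ norm; the index shifts $\sigma - 9/4$ and $\sigma - 7/4$ are non-positive because $\sigma\le 3/2$, so everything fits within $CC_*M\beta^{-1/2}|h^i|_{\rho,\theta}^2$.

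For the corrector piece $\vec\phi_R^i$, the plan is a two-step argument: first apply Lemma \ref{lem:phiR} with $\theta$ replaced by $\theta-1/2$ and move the $\beta$ factor on the left to the right to produce
\begin{align*}
\int_0^T \|\partial_y^2\vec\phi_R^i\|_{\rho,\theta}^2\,dt \le \frac{C\delta^{-3}C_*^2 M^2}{\beta^{3/2}} \int_0^T |h^i|_{\rho,\theta}^2\,dt,
\end{align*}
and then feed this into \eqref{6eq64} of Lemma \ref{lem:FG} to transfer the control to $\|\vec\phi_R^i\|_{\rho,\theta}^2$ and $\|\partial_y\vec\phi_R^i\|_{\rho,\theta}^2$, using $y^2(1-y)^2\le 1$ to bound the weighted $\partial_y^2$ norm by the unweighted one. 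Summing the six contributions yields \eqref{equa64}. I do not anticipate any serious obstacle beyond routine bookkeeping of index shifts and powers of $\beta$; the one point deserving attention is that the negative index shifts inherited from $\sigma\le 3/2$ (rather than the sharper $\sigma\le 7/6$ needed elsewhere) are sufficient throughout, which is consistent with the remark immediately following \eqref{de:phib} that the restriction on $\sigma$ can be relaxed within this estimate of $\vec\phi_b$.
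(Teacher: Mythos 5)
Your proof proposal is correct and follows essentially the same route as the paper's: both start from the six-piece decomposition \eqref{de:phib}, pass from the symmetric weights $y(1-y)$, $y^2(1-y)^2$ to the one-sided weights $\varphi^i$, $(\varphi^i)^2$ (and to the trivial bound $y^2(1-y)^2\leq 1$ for the $\vec\phi_R^i$ pieces), and then invoke Lemmas \ref{lem:phiH}, \ref{lem:phiT}, \ref{lem:phiTtwo}, \ref{lem:FG} (via \eqref{6eq64}) and \ref{lem:phiR}. Your bookkeeping of the $\ell$-values in Lemma \ref{lem:phiH}, the $\theta$-shifts needed to absorb the $\beta$ factor in Lemmas \ref{lem:phiT} and \ref{lem:phiR}, and the observation that all index shifts are non-positive for $\sigma\leq\frac{3}{2}$ match the details the paper leaves implicit.
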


\begin{proof}
The proof is straightforward. Observing that 
\begin{align*}
    \phi_{b,(m)}&=\partial_x^m\phi_{H}+\phi_{T,(m)}+\phi_{R,(m)}\\
    &=\partial_x^m\phi_{H}^0+\partial_x^m\phi_{H}^1+\phi_{T,(m)}^0+\phi_{T,(m)}^1+\phi_{R,(m)}^0+\phi_{R,(m)}^1,
\end{align*}
we have for any given $\theta\in \mathbb{R}$, recalling $\varphi^i$ is given by \eqref{def:phi},
\begin{multline*}
    ||\vec\phi_{b}||_{\rho,\theta}^2+||y(1-y)\partial_y\vec\phi_{b}||_{\rho,\theta}^2+||y^2(1-y)^2\partial_y^2\vec\phi_{b}||_{\rho,\theta}^2\\
    \leq C\sum_{i=0}^1\sum_{j=0}^2\big(\norm{(\varphi^i)^j\partial_y^j\phi_H^i}_{\rho,\theta}^2+\norm{(\varphi^i)^j\partial_y^j\vec\phi^i_T}_{\rho,\theta}^2+\norm{\partial_y^j\vec\phi^i_R}_{\rho,\theta}^2\big).
\end{multline*}
Then assertion \eqref{equa64} of Lemma \ref{lem:phib} follows from Lemmas \ref{lem:phiH}, \ref{lem:phiT}- \ref{lem:phiTtwo} and \ref{lem:FG}-\ref{lem:phiR}, thus completing the proof.
\end{proof}

\subsection{Existence of $(h_0,h_1)$ and completing the proof of Proposition \ref{prop:phi}}
In this part, we establish the existence of $(h^0, h^1)$, thus proving the validity of decomposition \eqref{de:phib}. In view of
\begin{align*}
\forall\ m\ge0,\quad\phi_{b,(m)}=\partial_x^m\phi_{H}+\phi_{T,(m)}+\phi_{R,(m)}, 
\end{align*}
we deduce from \eqref{eq:phiH}, \eqref{eq:phiT} and \eqref{eq:phiR} that
\begin{equation*}
		\begin{aligned}
\partial_y\phi_{b,(m)}|_{y=0}=\partial_x^mh^0+R^{00}_{b,(m)}+R^{01}_{b,(m)},\ \ \partial_y\phi_{b,(m)}|_{y=1}=\partial_x^mh^1+R^{10}_{b,(m)}+R^{11}_{b,(m)}.
		\end{aligned}
\end{equation*}
Here $R^{ji}_{b,(m)}(i,j=0,1)$ are linear operators and are defined by
\begin{equation*}
		\left\{
		\begin{aligned}
&R^{00}_{b,(m)}\stackrel{\rm def}{=}\inner{\partial_y\phi^0_{T,(m)}+\partial_y\phi^0_{R,(m)}}|_{y=0},\\
&R^{01}_{b,(m)}\stackrel{\rm def}{=}\inner{\partial_x^m\partial_y\phi^1_{H}+\partial_y\phi^1_{T,(m)}+\partial_y\phi^1_{R,(m)}}|_{y=0},\\
&R^{10}_{b,(m)}\stackrel{\rm def}{=}\inner{\partial_x^m\partial_y\phi^0_{H}+\partial_y\phi^0_{T,(m)}+\partial_y\phi^0_{R,(m)}}|_{y=1},\\
&R^{11}_{b,(m)}\stackrel{\rm def}{=}\inner{\partial_y\phi^1_{T,(m)}+\partial_y\phi^1_{R,(m)}}|_{y=1}.
		\end{aligned}
		\right.
\end{equation*}
Compared with the boundary conditions in system \eqref{eq:phib}, we need to find $(h^0,h^1)$ such that
\begin{equation}\label{7eq2}
		\left\{
		\begin{aligned}
&\partial_x^mh^0+R^{00}_{b,(m)}+R^{01}_{b,(m)}=-\partial_y\phi_{s,(m)}|_{y=0}+\partial_x^m\mathcal{C}(0)-\partial_x^m\mathcal{C}(t),\\
&\partial_x^mh^1+R^{10}_{b,(m)}+R^{11}_{b,(m)}=-\partial_y\phi_{s,(m)}|_{y=1}+\partial_x^m\mathcal{C}(0)-\partial_x^m\mathcal{C}(t).
		\end{aligned}
		\right.
\end{equation}
 To do that, we defined an operator $R_{b}: X_{b,\theta}\to \tilde{X}_{b,\theta}$, which is defined by
\begin{align}\label{7eq3}
 R_{b}[h^0,h^1]=\inner{\left\{R^{00}_{b,(m)}+R^{01}_{b,(m)}\right\}_{m\ge 0},\ \left\{R^{10}_{b,(m)}+R^{11}_{b,(m)}\right\}_{m\ge 0}}.   
\end{align}
Here the Banach spaces $X_{b,\theta,T}$ and $\tilde{X}_{b,\theta,T}$ are defined by
\begin{equation*}
X_{b,\theta,T}= \left\{(h^0,h^1)\in L^2(0,T;L_x^2)\big| \int^T_0|(h^0,h^1)|^2_{\rho,\theta}dt<+\infty\right\},   
\end{equation*}
and
\begin{equation*}
\tilde{X}_{b,\theta,T}= \left\{(\vec h^0,\vec h^1)\in L^2(0,T;L_x^2)\big| \int^T_0|(\vec h^0,\vec h^1)|^2_{\rho,\theta}dt<+\infty\right\}.  
\end{equation*}
We remark that $R_b$ is a linear operator due to the linearity of systems \eqref{eq:phiH}, \eqref{eq:phiT} and \eqref{eq:phiR}.

\begin{lemma}[Existence of $(h^0,h^1)$]\label{lem:exist}
Under the same assumption as given in Theorem \ref{thm:pri}, it holds that for given $\theta\in \mathbb{R}$,
\begin{align}\label{7eq6}
   \int^T_0|R_{b}[h^0,h^1]|^2_{\rho,\theta}dt\leq \frac{CC_*^2M^2\delta^{-3}}{\beta^{\frac{1}{2}}}\int^T_0|(h^0,h^1)|^2_{\rho,\theta}dt,
\end{align}
provided $\beta$ is sufficiently large. Moreover, there exists $(h^0,h^1)\in X_{b,r+\sigma,T}$ such that
\eqref{7eq2} holds with $(h^0,h^1)$ satisfying
\begin{align*}
   \int^T_0|(h^0,h^1)|^2_{\rho,r+\sigma}dt\leq \frac{C}{\beta}\int^T_0\mathcal{Y}_\rho dt+\frac{C\delta^{-2}C_*M}{\beta}. 
\end{align*}
Here the operator $R_{b}$ is defined by \eqref{7eq3}.
\end{lemma}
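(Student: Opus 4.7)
The overall strategy is: (i) to establish the operator norm bound \eqref{7eq6} by chaining the boundary-trace estimates from Lemmas \ref{lem:phiH}, \ref{lem:phiTtwo}, \ref{lem:FG} and \ref{lem:phiR}, (ii) to use this smallness to invert $I + R_b$ by Neumann series on $X_{b,r+\sigma,T}$, and (iii) to close the loop by estimating the source term $-\partial_y\phi_{s,(m)}|_{y=0,1}+\partial_x^m(\mathcal{C}(0)-\mathcal{C}(t))$ via Lemma \ref{lem:phis} and a direct control of the zero-mode $\mathcal{C}(0)-\mathcal{C}(t)$.

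For (i), I would inspect each of the four contributions to $R^{ji}_{b,(m)}$. The boundary traces of $\partial_y\vec\phi^i_H$ at the \emph{opposite} side $y=1-i$ satisfy the exponentially small bound implied by \eqref{6eq22} in Lemma \ref{lem12}, giving a $\beta^{-20}$ factor. The boundary traces $\partial_y\vec\phi^i_T|_{y=0,1}$ are controlled by \eqref{6eq56} of Lemma \ref{lem:phiTtwo} with gain $\beta^{-1/2}$ and an index shift $\theta + \sigma - 3/2$, which is non-positive since $\sigma\le 3/2$, hence absorbed in $|\cdot|_{\rho,\theta}$. The boundary traces $\partial_y\vec\phi^i_R|_{y=0,1}$ are split via \eqref{6eq64} of Lemma \ref{lem:FG} into a main term with favourable index shift $-3/4$ and a remainder $\int\|\partial_y^2\vec\phi^i_R\|^2_{\rho,\theta}dt$; using \eqref{6eq69} with $\theta$ shifted by $-1/2$ and then the elementary bound $\int_0^T\,ds\le T=\beta^{-1}$, this remainder is controlled by $\beta^{-3/2}\delta^{-3}C_*^2M^2|h^i|^2_{\rho,\theta}$. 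Summing these contributions yields \eqref{7eq6}.

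For (ii) and (iii), observe that \eqref{7eq2} is exactly the equation $(I+R_b)[h^0,h^1]=\mathcal{S}$ on the Banach space $X_{b,r+\sigma,T}$, where
\begin{equation*}
\mathcal{S}_{m}^{i}\stackrel{\rm def}{=}-\partial_y\phi_{s,(m)}|_{y=i}+\partial_x^m\mathcal{C}(0)-\partial_x^m\mathcal{C}(t),\qquad i=0,1.
\end{equation*}
Choosing $\beta$ so large that $CC_*^2M^2\delta^{-3}/\beta^{1/2}\le 1/4$ in \eqref{7eq6} makes $R_b$ a contraction of norm $\le 1/2$ on $X_{b,r+\sigma,T}$, so $(I+R_b)^{-1}$ exists via Neumann series with norm $\le 2$. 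It remains to estimate $\int_0^T|\mathcal{S}|^2_{\rho,r+\sigma}\,dt$. Since $\mathcal{C}(t)$ is independent of $x$, the term $\partial_x^m(\mathcal{C}(0)-\mathcal{C}(t))$ survives only at $m=0$; for that mode I use
\begin{equation*}
|\mathcal{C}(t)-\mathcal{C}(0)|\le\int_0^t\Big|\int_\Omega\partial_su\,dxdy\Big|ds\le CT^{1/2}\Big(\int_0^T\|\partial_tu\|_{L^2}^2dt\Big)^{1/2}\le C\beta^{-1/2}\Big(\int_0^T\mathcal{Y}_\rho dt\Big)^{1/2},
\end{equation*}
where the last inequality uses $\|\partial_tu\|_{L^2}^2\le C\mathcal{Y}_{\rho,1}\le C\mathcal{Y}_\rho$ coming from the definition \eqref{def:energy}. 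Integrating in $t\in[0,T]$ with $T=\beta^{-1}$ gives $\int_0^T|\mathcal{C}(0)-\mathcal{C}(t)|^2dt\le C\beta^{-2}\int_0^T\mathcal{Y}_\rho dt$. The trace term is handled directly by the second estimate \eqref{est:phis2} of Lemma \ref{lem:phis}. Summing these two contributions and applying $\|(I+R_b)^{-1}\|\le 2$ produces the claimed bound on $(h^0,h^1)$.

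The main obstacle is the sharp bookkeeping of the Gevrey index shifts: the trace of $\partial_y\vec\phi^i_R$ comes with a shift $+1/2$ in $\theta$ unless one pays an extra power of $\beta^{-1/2}$ by invoking $T=\beta^{-1}$, and the trace of $\partial_y\vec\phi^i_T$ needs the standing hypothesis $\sigma\le 3/2$ to keep the shift non-positive. These two constraints together are what force $\sigma\le 3/2$ to be sufficient (not $\sigma\le 7/6$) for the boundary-corrector part, as already advertised in the excerpt; any loss in this bookkeeping would prevent $I+R_b$ from being a contraction.
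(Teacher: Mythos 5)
Your proof follows essentially the same route as the paper: chain the boundary-trace estimates from Lemmas \ref{lem:phiH}, \ref{lem:phiTtwo}, \ref{lem:FG} and \ref{lem:phiR} to obtain the operator bound \eqref{7eq6}; use smallness of $R_b$ (for $\beta$ large) to invert $I+R_b$ by a contraction/Neumann-series argument on $X_{b,r+\sigma,T}$; and control the source term by \eqref{est:phis2} together with a direct bound on $\mathcal{C}(t)-\mathcal{C}(0)$. The one point where you deviate is minor and equivalent: to bound $\int_0^T|\mathcal{C}(t)-\mathcal{C}(0)|^2\,dt$ you integrate $\partial_t u$ and invoke $\|\partial_t u\|_{L^2}^2\lesssim\mathcal{Y}_{\rho,1}$ to get $\beta^{-2}\int\mathcal{Y}_\rho\,dt$, whereas the paper instead uses $|\mathcal{C}(t)-\mathcal{C}(0)|\le C\sup_t\|u\|_{L^2}$ together with $T=\beta^{-1}$ and assumption \eqref{ass:pri} to get $\beta^{-1}C_*M\le C\delta^{-2}C_*M/\beta$; both close the estimate. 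Your index bookkeeping for the three trace contributions (the exponentially small opposite-side trace of $\partial_y\phi^i_H$, the $\sigma\le 3/2$-favourable shift for $\partial_y\vec\phi^i_T$, and the $\vec\phi^i_R$ trace split through \eqref{6eq64} and \eqref{6eq69} at index $\theta-\tfrac12$) is correct; the only stray remark is the invocation of $\int_0^T ds\le\beta^{-1}$ in controlling the $\vec\phi^i_R$ remainder, which is unnecessary since the needed $\beta^{-3/2}$ already comes from dividing the $\beta\int\|\partial_y^2\vec\phi^i_R\|^2$ term in \eqref{6eq69} by $\beta$.
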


\begin{proof}
First, it follows from Lemmas \ref{lem:phiH}, \ref{lem:phiTtwo}-\ref{lem:FG} and \ref{lem:phiR}, it is easy to get for any $\theta\in\mathbb{R}$ and $1\leq \sigma\leq \frac{3}{2}$,
\begin{equation*}
   \int^T_0|R_{b}[h^0,h^1]|^2_{\rho,\theta}dt\leq \frac{CC_*^2M^2\delta^{-3}}{\beta^{\frac{1}{2}}}\int^T_0|(h^0,h^1)|^2_{\rho,\theta}dt.
\end{equation*}
Thus, estimate \eqref{7eq6} holds.

On the other hand, in view of \eqref{7eq2}, we define the map $S_{b}: X_{b,\theta,T}\to X_{b,\theta,T}$ by
\begin{align*}
   S_{b}[h^0,h^1]= x_{s}+R_{b}[h^0,h^1],
\end{align*}
where
\begin{align*}
    x_{s}=\left(\{\partial_y\phi_{s,(m)}|_{y=0}\}_{m\ge 0},\{\partial_y\phi_{s,(m)}|_{y=1}\}_{m\ge 0}\right).
\end{align*}
For any $ (h^0,h^1), (\tilde{h}^0,\tilde{h}^1)\in X_{b,\theta,T}$, we have
\begin{align*}
    &\int^T_0|S_{b}[h^0,h^1]-S_{b}[\tilde{h}^0,\tilde{h}^1]|^2_{\rho,\theta}dt\leq \int^T_0|R_{b}[h^0,h^1]-R_{b}[\tilde{h}^0,\tilde{h}^1]|^2_{\rho,\theta}dt\\
    &\leq \int^T_0|R_{b}[h^0-\tilde{h}^0,h^1-\tilde{h}^1]|^2_{\rho,\theta}dt\leq \frac{CC_*^2M^2\delta^{-3}}{\beta^{\frac{1}{2}}}\int^T_0|(h^0-\tilde{h}^0,h^1-\tilde{h}^1)|^2_{\rho,\theta} dt.
\end{align*}
Choosing $\beta$ large enough such that $\frac{CC_*^2M^2\delta^{-3}}{\beta^{\frac{1}{2}}}\leq \frac{1}{2}$, we have that the map $S_{b}$ is a contract map. Moreover, Lemma \ref{lem:phis} indicates that $\partial_y\vec\phi_{s}|_{y=0,1}\in X_{b,r+\sigma,T}$.
As a consequence, there exists a unique $(h^0,h^1)\in X_{\rho,r+\sigma,T}$ such that
\eqref{7eq2} holds satisfying
\begin{multline*}
       \int^T_0|(h^0,h^1)|^2_{\rho,r+\sigma}dt\leq C\int^T_0\inner{|\partial_y\vec\phi_{s}|_{y=0,1}|^2_{\rho,r+\sigma}+|\mathcal{C}(t)-\mathcal{C}(0)|^2}dt\\
      \leq \frac{C}{\beta}\int^T_0\mathcal{Y}_\rho dt+\frac{C\delta^{-2}C_*M}{\beta^2}+\beta^{-1}\sup_{t\in [0,T]}\norm{u}_{L^2}^2\leq \frac{C}{\beta}\int^T_0\mathcal{Y}_\rho dt+\frac{C\delta^{-2}C_*M}{\beta},
\end{multline*}
where in the last line we have used Lemma \ref{lem:phis} as well as assumption \eqref{ass:pri}.
The proof of Lemma \ref{lem:exist} is thus completed.
\end{proof}

 With the estimates of $\vec\phi_s$ and $\vec\phi_b$, we now complete the proof of Proposition \ref{prop:phi}.
\begin{proof}[Completing the proof of Proposition \ref{prop:phi}]
In view of \eqref{def:phia} and \eqref{de:phia}, we have
\begin{align*}
\norm{\phi}_{\rho,r+\sigma}\leq \norm{\vec\phi_s}_{\rho,r+\sigma}+\norm{\vec\phi_b}_{\rho,r+\sigma}+\norm{\phi_0}_{\rho,r+\sigma}.
\end{align*}
Then assertion \eqref{est:phi} of Proposition \ref{prop:phi} follows by combining these estimates in Lemmas \ref{lem:phis}, \ref{lem:phib} and \ref{lem:exist} and choosing $\beta$ large enough. This completes the proof.
\end{proof}

\section{Proof of Theorem \ref{thm:pri}}\label{sec:proof}
This section is devoted to completing the proof of Theorem \ref{thm:pri}. Combining the estimates in Propositions \ref{prop:x0} and \ref{prop:phi} and using Cauchy inequality yield
 \begin{equation*}
 \begin{aligned}
&\sup_{t\in[0,T]}\mathcal{X}_{\rho,0}+\beta\int^T_0\mathcal{Y}_{\rho,0}dt+\int^T_0\mathcal{Z}_{\rho,0}dt\\
 &\leq C\delta^{-2}\int^T_0\mathcal{Y}_\rho dt+\frac{C\delta^{-4}C_*M}{\beta}+C\delta^{-3}C_*^\frac{3}{2}M^\frac{3}{2}\int^T_0\mathcal{Y}_\rho dt\\
 &\quad+C\delta^{-3}C_*^\frac{3}{2}M^\frac{3}{2}\int^T_0\mathcal{Y}_\rho^\frac{1}{2}\mathcal{Z}_\rho^\frac{1}{2} dt+C\delta^{-3}C_*M\int^T_0\mathcal{Y}_\rho^\frac{1}{4}\mathcal{Z}_\rho^\frac{3}{4}dt+C\delta^{-2}M\\
 &\leq C\delta^{-12}C_*^6M^6\int^T_0\mathcal{Y}_\rho dt+\frac{1}{4}\int^T_0\mathcal{Z}_{\rho}dt+\frac{C\delta^{-4}C_*M}{\beta}+C\delta^{-2}M.
  \end{aligned}
 \end{equation*}
For the terms $\mathcal{X}_{\rho,1}$ and $\mathcal{X}_{\rho,2}$, following an analogous argument without additional difficulty, we deduce that for $k=1,2$,
\begin{multline*}
\sup_{t\in[0,T]}\mathcal{X}_{\rho,k}+\beta\int^T_0\mathcal{Y}_{\rho,k}dt+\int^T_0\mathcal{Z}_{\rho,k}dt\\
\leq C\delta^{-12}C_*^6M^6\int^T_0\mathcal{Y}_\rho dt+\frac{1}{4}\int^T_0\mathcal{Z}_{\rho}dt+\frac{C\delta^{-4}C_*M}{\beta}+C\delta^{-2}M.
\end{multline*}
Recalling $\mathcal{X}_\rho$, $\mathcal{Y}_\rho$ and $\mathcal{Z}_\rho$ are defined in \eqref{def:energytwo},
we combine the estimates above to obtain
\begin{equation}\label{xxxx}
\sup_{t\in[0,T]}\mathcal{X}_{\rho}+\big(\beta-C\delta^{-12}C_*^6M^6\big)\int^T_0\mathcal{Y}_{\rho}dt+\frac{1}{4}\int^T_0\mathcal{Z}_{\rho}dt\leq \frac{C\delta^{-4}C_*M}{\beta}+C\delta^{-2}M.    
\end{equation}
For fixed $C_*\ge 1$, we choose $\beta$ large enough such that
\begin{equation*}
\beta-C\delta^{-12}C_*^6M^6\ge \frac{\beta}{2}\quad\mathrm{and}\quad \beta\ge C_*\delta^{-2}.
\end{equation*}
Then we deduce the above estimate \eqref{xxxx} that
\begin{equation*}
\sup_{t\in[0,T]}\mathcal{X}_{\rho}+\beta\int^T_0\mathcal{Y}_{\rho}dt+\int^T_0\mathcal{Z}_{\rho}dt\leq C\delta^{-2}M. 
\end{equation*}
Consequently, the desired assertion \eqref{ret:pri} follows by choosing $C_*\ge C\delta^{-2}$ in the above inequality. This completes the proof of Theorem \ref{thm:pri}.

\subsection*{Acknowledgements}

This work was supported by Natural Science Foundation of China
(Nos. 12325108, 12131017, 12221001), and the Natural Science Foundation of Hubei Province (No. 2019CFA007).  Xu
 would like to thank the support from the Research Centre for Nonlinear Analysis
in The Hong Kong Polytechnic University.

	%\bibliographystyle{abbrv}
	
	%\bibliography{references}

\end{document}